\newtheorem{Aalgorithm}[theorem]{Algorithm}
\newcommand{\be}{\begin{equation}}
\newcommand{\ee}{\end{equation}}
\newcommand{\bea}{\begin{eqnarray}}
\newcommand{\eea}{\end{eqnarray}}
\newcommand{\beas}{\begin{eqnarray*}}
\newcommand{\eeas}{\end{eqnarray*}}
\newcommand{\vertiii}[1]{{\left\vert\kern-0.25ex\left\vert\kern-0.25ex\left\vert #1 
    \right\vert\kern-0.25ex\right\vert\kern-0.25ex\right\vert}}
\def\BState{\State\hskip-\ALG@thistlm}
\begin{document}
\title{Three linear, unconditionally stable, second order decoupling methods for the Allen--Cahn--Navier--Stokes phase field model}

\author{
Ruonan Cao\thanks{Department of Mathematics, Shantou University, Guangdong, 515063 China, {\tt 19rncao@stu.edu.cn}. }.
\and Nan Jiang\thanks{Department of Mathematics, University of Florida, Gainesville, FL 32611, {\tt jiangn@ufl.edu}.}
\and Huanhuan Yang \thanks{Corresponding author. Department of Mathematics, Shantou University, Guangdong, 515063 China, {\tt huan2yang@stu.edu.cn}. }
}
\maketitle

\begin{abstract}
Hydrodynamics coupled phase field models have intricate difficulties to solve numerically as they feature high nonlinearity and great complexity in coupling. In this paper, we propose three second order, linear, unconditionally stable decoupling methods based on the Crank--Nicolson leap-frog time discretization for solving the Allen--Cahn--Navier--Stokes (ACNS) phase field model of two-phase incompressible flows. The ACNS system is decoupled via the artificial compression method and a splitting approach by introducing an exponential scalar auxiliary variable.
We prove all three algorithms are unconditionally long time stable. Numerical examples are provided to verify the convergence rate, unconditional stability, and computational efficiency.

\end{abstract}

\begin{keywords}
phase field model, Allen--Cahn--Navier--Stokes,  artificial compression, Crank--Nicolson leap-frog, scalar auxiliary variable, unconditional stability
\end{keywords}

\section{Introduction}
Phase field or diffuse interface models are widely used to study the interfacial dynamics in many scientific and engineering applications \cite{Anderson98, Edwards91, Feng06, Han17}. This topic has received an increasing amount of attention from the research community in mathematical and numerical analysis in the past decade.
In particular, the ACNS phase field model is popularly used to describe the motion of a mixture of two incompressible fluids \cite{Liu, Pyue}.  It features a nonlinear system consisting of the incompressible Navier--Stokes (NS) equations coupled with the Allen--Cahn (AC) equations.
In the model, a continuous phase field function $\phi$ is introduced to label different fluid components while their sharp interface is implicitly tracked by a thin but smooth transition layer, i.e. the diffuse interface. 
By studying the evolution of the phase field function, one can avoid explicit interface tracking and perform simulations on a fixed mesh grid, rendering a convenient numerical approach to simulate various interfacial problems.
The dynamics of the phase field variable is described by the Allen--Cahn equation, obtained by the gradient flow method, namely, minimizing the total free energy in the space of $L^2$.

There exist many effective numerical schemes for each component of the ACNS system, for instance, the projection method \cite{Chorin68, Rannacher91, GMS06, Guermond09} and the artificial compression method \cite{DLM17, CLM19, LM20,  GM19, DILMS20} for the incompressible Navier-Stokes equations; the convex splitting method \cite{ShenWang12, Wise13, HW15}, the Lagrange multiplier approach \cite{Guillen13}, the Invariant Energy Quadratization (IEQ) method 
\cite{YZW17}, and the Scalar Auxiliary Variable approach (SAV) \cite{Shen18, ZYang18} for the phase-field type models. 
For solving a hydrodynamics coupled phase field model, however, a simple combination of the methods from each component may not produce an accurate, efficient, and unconditionally long time stable scheme.
The main challenges lie in the high nonlinearity in the Allen--Cahn and Navier--Stokes equations, the coupling between the phase field variable $\phi$ and velocity $u$ through a phase induced stress term in the NS equations and a fluid induced transport term in the Allen--Cahn equations, and the coupling between the fluid velocity $u$ and pressure $p$ in the Navier--Stokes equations.

Due to the intricate complexity of the ACNS system, most of the schemes developed in the literature are either first-order time accurate \cite{Shen10, Shen15}, or nonlinear and coupled schemes \cite{Han17, Vaibhav18}, or fully decoupled but without provable energy stability analysis \cite{Dong12, He21}, see an overview also in \cite{Han17, xfYang20}. The first method to have all the desired characteristics (i.e., linear, unconditionally energy stable, fully-decoupled, and second-order accurate in time ) for solving the NS equations coupled with mass-conserved Allen--Cahn phase field model is based on the second-order backward differentiation formula (BDF2) for time stepping and the projection method for velocity and pressure decoupling \cite{xfYang20}. 
 Nevertheless, the projection method is only first-order accurate for the pressure due to the artificial boundary conditions applied to the pressure, and one needs to solve a Poisson equation in each time step to update the pressure.  
Developing other alternative efficient and unconditionally long time stable, higher order time stepping methods is still in great need.

While existing methods are based on either the Crank--Nicolson or the BDF2 time stepping, we focus on the Crank--Nicolson leap-frog time discretization (CNLF \cite{Layton12, Layton14, Jiang15a, Jiang15b}) to develop three efficient, linear, unconditionally long time stable decoupling schemes. The CNLF method is commonly used in the atmospheric and oceanic simulations for its high accuracy, but has been less theoretically studied in the literature, and used for other engineering applications. Herein we design three numerical schemes based on CNLF for the the phase field model and relevant applications.
In the first scheme, we adopt an idea of Lagrange multiplier of \cite{Guillen13} to linearize the Allen--Cahn equations, then combine it with the artificial compression technique for decoupling the velocity and pressure in the NS equations. 
Unlike the most frequently employed projection type methods which were first introduced by Chorin \cite{Chorin68} and Temam \cite{Temam69} in the late 1960s, the artificial compression methods which were also first studied in the sixties by Chorin \cite{Chorin67}, Temam \cite{Temam69a,Temam69}, Kuznetsov, Vladimirova and Yanenko \cite{KVY66}, are less studied in the literature and have only recently received increasing attention. 
The artificial compression methods relax the incompressibility constraint in the NS equations by adding a perturbation, e.g., $ \epsilon\partial_tp$, to the mass conservation equation which facilitates decoupling the computation of velocity and pressure in time marching schemes, see \cite{DLM17, CLM19, LM20,  GM19, DILMS20} for recent developments. 
It is worth noting the outstanding feature of the artificial compression methods is that the pressure can be updated directly without solving a Poisson equation which avoids the spurious oscillations in the boundary layer of pressure due to artificial boundary conditions required in a projection method. We will derive a linear and unconditionally stable scheme that is partially decoupled. Despite the fact that the computation of the phase field variable is still coupled with that of the velocity, we only need to solve a linear system in reduced size without the use of Picard/Newton iterations, which greatly reduces the computational cost.

The second idea is to incorporate the artificial compression technique with an SAV decoupling strategy for developing highly efficient, fully decoupled schemes. 
The SAV approach was first studied in \cite{SX18, Shen18} for gradient flows. It introduces a new scalar auxiliary variable that can be used to form a modified system of the underlying partial differential equation (PDE) system so that the nonlinear part can be canceled out in stability analysis, leading to unconditionally stable methods for solving nonlinear systems \cite{LS20, LYD19}. Following the new decoupling strategy studied in \cite{JY21b} for the Stokes-Darcy equations using the SAV idea, we find it is also possible to cancel out the coupling terms that usually lead to the time step constraints in a typical decoupling method for the ACNS model.  
This is achieved by introducing a scalar auxiliary variable to handle the lagged coupling terms in the ACNS model, namely the phase induced stress term in the NS equations and the fluid induced transport term in the Allen--Cahn equations. A modified PDE system which is equivalent to the original ACNS model is then formulated, and an efficient, fully decoupled discretization method can be derived and proved to be long time stable without any time step constraints. The SAV decoupling strategy studied here can be easily extended to other popular time stepping methods and produce a family of unconditionally stable numerical schemes.

The rest of this article is organized as follows. In Section \ref{sec:model}, we briefly introduce the ACNS model. In Section \ref{sec:cnlfac}, the CNLFAC numerical scheme is presented and proved to be unconditionally long time stable. In Section \ref{sec:acsav}, the fully decoupled ACSAV scheme is proposed and proved to be unconditionally stable. Details for its efficient implementation are also provided; Another version of ACSAV that treats the convection term fully explicitly in the NS equations is presented in Section \ref{sec:nsacsav}. In Section \ref{sec:result}, we perform various numerical simulations to demonstrate the stability, feasibility, and computational efficiency of the three proposed algorithms. Some concluding remarks are made in Section \ref{sec:sum}.


\section{The ACNS model}\label{sec:model}
Consider the modeling of a mixture of two immiscible, incompressible fluids in a bounded Lipschitz domain $\Omega$ in $\mathbb{R}^d$ $(d=2, 3)$. 
To label the two different fluids, a phase-field variable (macroscopic labeling function) $\phi$ is introduced, i.e.\begin{equation*}\label{phase-function}
\phi(x,t)=\left\{\begin{aligned}
&1, \qquad  \text{for fluid 1},\\
&-1, \quad \text{for fluid 2},
\end{aligned}\right.
\end{equation*}
with the discontinuity of the function smoothed by a thin, smooth transition region of width $O(\eta)$ ($\eta\ll 1$).
The total energy $W$ of the system is a sum of the kinetic energy and the Ginzburg-Landau type of Helmholtz free energy:
\begin{equation*}\label{energy}
W=\int_\Omega\Big(\frac{1}{2}\lvert u\rvert^2+\lambda\Big(\frac{\lvert\nabla\phi\rvert^2}{2}+F(\phi)\Big)\Big)dx,
\end{equation*}
where $u$ is the fluid velocity, $\lambda$ is related to the surface tension parameter. The second term in the expression of $W$ contributes to the tendency of mixing  between the materials, while the third term, the double-well bulk energy $F(\phi)=\frac{1}{4\eta^2}(\phi^2-1)^2$, represents the tendency of separation. As a consequence of the competition between the two types of interactions, a diffusive interface with thickness proportional to the parameter $\eta$ will form in equilibrium.

Assuming a generalized Ficks law holds
 and the fluid is incompressible, the governing equations of the Allen-Cahn-Navier-Stokes model are derived by minimizing the total energy in the space of $L^2$ \cite{Liu, Pyue}. It writes as
\begin{numcases}{}
	\partial_t\phi+u\cdot\nabla \phi =-M \mu,\label{ACNS-1}\\
	\mu = \frac{\delta W}{\delta \phi}=\lambda (-\Delta \phi  + f(\phi)),\label{ACNS-2}\\
	\partial_t u+ (u\cdot \nabla) u  -\nu \Delta u + \nabla p=\mu \nabla \phi,\label{ACNS-3}\\
	\nabla \cdot u = 0,\label{ACNS-4}
\end{numcases}
\noindent where $f(\phi)= F^{\prime}(\phi)=\frac{(\phi^2-1)\phi}{\eta^2}$, $\mu = \frac{\delta W}{\delta \phi}$ is the variational derivative or chemical potential, $p$ is the pressure, $M$ is the relaxation or mobility parameter of the phase function, and $\nu$ is the viscosity parameter.

 We assume the following boundary conditions for the simplicity of the presentation of the analysis:
$$u|_{\partial \Omega}=0, \quad \partial_{n}\phi |_{\partial\Omega}=0, \quad \partial_n\mu|_{\partial\Omega}=0.$$
The analysis can extended to other boundary conditions with minor modification.

Throughout this paper the $L^2(\Omega)$ norm of scalars, vectors, and tensors will be denoted by $\Vert \cdot\Vert$ with the usual $L^2$ inner product denoted by $(\cdot, \cdot)$. 
By taking the $L^2$ inner product of \eqref{ACNS-1} with $\mu$,  \eqref{ACNS-2} with $-\partial_t\phi$,  \eqref{ACNS-3} with $u$, and then using \eqref{ACNS-4} and summing up the resulted identities we can easily get
\begin{equation*}\label{energydiffusion}
\frac{dW}{dt}=-M\rVert\mu\lVert^2-\nu\lVert \nabla u\rVert^2.
\end{equation*}
This means the total energy of the ACNS system is dissipative.

\section{CNLFAC method for the ACNS model}
\label{sec:cnlfac}
In this section we introduce the Crank--Nicolson leap-frog artificial compression method (CNLFAC) in the semi-discrete form and prove it is unconditionally long time stable. 

In the ACNS equations, the function $f(\phi)=\frac{1}{\eta^2}(\phi^3-\phi)$ is nonlinear and usually results in nonlinear semi-discrete schemes that require Picard/Newton iterations for computation. This makes the computation expensive and takes more simulation time. For differential equations that involve nonlinearity, linear schemes are usually desirable but difficult to design due to stability and convergence issues. For ACNS, only a few fully linear time stepping schemes are available and most of them are conditionally stable. A recent paper by Han et al \cite{Han17} proposed an unconditionally stable, second order, linear scheme for the ACNS equations adopting an idea of Lagrange multiplier of \cite{Guillen13}. Here we adopt the same idea and combine it with an artificial compression method based on the Crank--Nicolson leap-frog time stepping method. With the artificial compression method the computation of the velocity and pressure is decoupled reducing the size of the linear systems to be solved. Moreover, the pressure can be updated directly without solving any partial differential equations further reducing the computational cost. The proposed CNLFAC method is second-order accurate for all three variables: the phase field $\phi$, the fluid velocity $u$, and the fluid pressure $p$.
 
Let $q=\frac{1}{\eta^2}(\phi^2-1)$ and thus $f(\phi)=\phi q$. Taking the derivative of $q$ with respect to $t$ gives $\partial_tq=\frac{2}{\eta^2}\phi\partial_t\phi$, which can be discretized by the Crank--Nicolson leap-frog scheme as 
$$\frac{q^{n+1}-q^{n-1}}{2\Delta t}=\frac{2}{\eta^2} \phi^n \frac{\phi^{n+1}-\phi^{n-1}}{2\Delta t}.$$
By introducing the variable $q$, the total energy can be written as 
\begin{equation}\label{energy-q}
W=\int_\Omega\Big(\frac{1}{2}\lvert u\rvert^2+\lambda\Big(\frac{\lvert\nabla\phi\rvert^2}{2}+\frac{\eta^2}{4}q^2\Big)\Big)dx .
\end{equation}

We will use $q$ as an intermediate variable in our algorithm so that a fully linear scheme could be devised. Let $t_n=n\Delta t$, $n=0,1,2,\cdots, N$, where $N=T/\Delta t$, denote a uniform partition of the interval $[0,T]$. We now propose an unconditionally stable, second order, linear, artificial compression method given by
\begin{Aalgorithm}\label{Algo-ACNS}
Given $u^{n-1}$, $u^n$, $p^{n-1}$, $p^n$, $\phi^{n-1}$, $\phi^n$, $q^{n-1}$, $q^n$, find $u^{n+1}$, $p^{n+1}$, $\phi^{n+1}$ and $q^{n+1}$ satisfying
\begin{align}
&\dot{\phi}^{n+1}-\lambda M \left( \Delta \frac{\phi^{n+1}+\phi^{n-1}}{2}-\phi^n\frac{q^{n+1}+q^{n-1}}{2}\right)=0,\label{ACNS1}\\
&q^{n+1}-q^{n-1}=\frac{2}{\eta^2} \phi^n \left(\phi^{n+1}-\phi^{n-1}\right), \label{ACNS2}\\
&\frac{u^{n+1}-u^{n-1}}{2\Delta t}-\beta \Delta t^{-1}\nabla \left( \nabla \cdot u^{n+1} - \nabla \cdot u^{n-1}\right)\label{ACNS3}\\
&\qquad\qquad\quad+ (u^{n}\cdot\nabla) \left(\frac{u^{n+1}+u^{n-1}}{2}\right)
+\frac{1}{2}\left( \nabla \cdot u^n\right) \left( \frac{u^{n+1}+u^{n-1}}{2}\right)\nonumber\\
&\qquad\qquad\quad-\nu\Delta \left(\frac{u^{n+1}+u^{n-1}}{2}\right)+\nabla p^n +\frac{1}{M}\dot{\phi}^{n+1} \nabla \phi^n=0,\nonumber\\
&\alpha\Delta t\left(p^{n+1}-p^{n-1}\right)+\nabla \cdot u^n=0,\label{ACNS4}
\end{align}
where 
\begin{align}
&\dot{\phi}^{n+1}=\frac{\phi^{n+1}-\phi^{n-1}}{2\Delta t}+\left(\frac{u^{n+1}+u^{n-1}}{2}\right)\cdot\nabla \phi^n, \quad \alpha>0, \quad \beta>0. \nonumber\label{ACNS5}
\end{align}
\end{Aalgorithm}

Note that we could rewrite \eqref{ACNS2} as
\begin{equation}\label{q}
q^{n+1}=q^{n-1}+\frac{2}{\eta^2} \phi^n \left(\phi^{n+1}-\phi^{n-1}\right), 
\end{equation}
and replace $q^{n+1}$ in \eqref{ACNS1} so that \eqref{ACNS1} becomes
\begin{equation*}\label{newphi}
\dot{\phi}^{n+1}-\lambda M \left( \Delta \frac{\phi^{n+1}+\phi^{n-1}}{2}\right)+\lambda M \phi^n \left(\frac{1}{\eta^2} \phi^n \left(\phi^{n+1}-\phi^{n-1}\right)+q^{n-1}\right)=0.
\end{equation*}
Therefore adding the intermediate variable $q$ does not increase the computational cost. One still only needs to solve for the primary variables $u, p$, and $\phi$ while $q$ is updated directly using the formula \eqref{q} in the procedure.  

In this algorithm, the  artificial compression is incorporated by adding a small perturbation of $\partial_tp $ to the mass conservation equation, and discretized as   $2\alpha  \Delta t^2 \cdot\frac{p^{n+1}- p^{n-1} }{2\Delta t}$, see \eqref{ACNS4}. So the scheme has $O(\Delta t^2)$ consistency error. The pressure $p^{n+1}$ can then be updated directly without solving any additional partial differential equations.

\begin{theorem}
[Stability of Algorithm \eqref{Algo-ACNS}]\label{ACNS:stability} Taking $\alpha$ and $\beta$ such that $\alpha\beta\geq \frac{1}{4}$, then for any $N\geq 2$
\begin{align}
\Delta t\sum_{n=1}^{N-1}\frac{1}{M}\Vert \dot{\phi}^{n+1}\Vert^2&+\left(\frac{\lambda}{4}\Vert \nabla \phi^{N}\Vert^2+\frac{\lambda}{4}\Vert \nabla \phi^{N-1}\Vert^2\right)+\frac{\lambda\eta^2}{8} (\Vert q^{N}\Vert^2+\Vert q^{N-1}\Vert^2)\nonumber\\
&+\Delta t\sum_{n=1}^{N-1}\nu\Vert\nabla \frac{u^{n+1}+u^{n-1}}{2}\Vert^2+\frac{1}{4}\left(\Vert u^{N}\Vert^2+\Vert u^{N-1}\Vert^2\right)\nonumber\\
&\leq \left(\frac{\lambda}{4}\Vert \nabla \phi^{1}\Vert^2+\frac{\lambda}{4}\Vert \nabla \phi^{0}\Vert^2\right)
+\frac{\lambda\eta^2}{8} (\Vert q^{1}\Vert^2+\Vert q^{0}\Vert^2)\nonumber\\
&\quad+\frac{1}{4}\left(\Vert u^{1}\Vert^2+2\beta \Vert \nabla \cdot u^{1}\Vert^2\right)+\frac{1}{4}\left(\Vert u^{0}\Vert^2+2\beta \Vert \nabla \cdot u^{0}\Vert^2\right)\nonumber\\
&\quad+\frac{1}{2}\alpha\Delta t^2 \left(\Vert p^{1}\Vert^2+\Vert p^{0}\Vert^2\right)+\frac{1}{2}\Delta t(p^{1}, \nabla \cdot u^{0})- \frac{1}{2}\Delta t(p^{0},\nabla \cdot u^{1}).\nonumber
\end{align}

\end{theorem}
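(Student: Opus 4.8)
The plan is to run the standard discrete energy argument for Crank--Nicolson leap-frog schemes: test each of \eqref{ACNS1}, \eqref{ACNS3}, \eqref{ACNS4} with a carefully chosen multiplier so that every leap-frog difference telescopes, then sum over $n=1,\dots,N-1$. The structural fact I will exploit is that the two coupling terms---the phase-induced stress $\frac{1}{M}\dot\phi^{n+1}\nabla\phi^n$ in the momentum equation and the fluid-induced transport hidden in $\dot\phi^{n+1}$---are exact negatives once \eqref{ACNS1} is used, so they cancel identically and no time-step restriction arises from the coupling.

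\textbf{Testing the phase and momentum equations.} I would test \eqref{ACNS1} with $\frac{\Delta t}{M}\dot\phi^{n+1}$, splitting $\dot\phi^{n+1}=\frac{\phi^{n+1}-\phi^{n-1}}{2\Delta t}+w^n$ with $w^n:=\big(\frac{u^{n+1}+u^{n-1}}{2}\big)\cdot\nabla\phi^n$. The first term gives the dissipation $\frac{\Delta t}{M}\|\dot\phi^{n+1}\|^2$; pairing $-\lambda M\,\Delta\frac{\phi^{n+1}+\phi^{n-1}}{2}$ against the $\frac{\phi^{n+1}-\phi^{n-1}}{2\Delta t}$ part and integrating by parts (using $\partial_n\phi=0$) yields the telescoping gradient energy $\frac{\lambda}{4}(\|\nabla\phi^{n+1}\|^2-\|\nabla\phi^{n-1}\|^2)$; pairing the $q$-term against the same part and substituting \eqref{ACNS2} as $\phi^n(\phi^{n+1}-\phi^{n-1})=\frac{\eta^2}{2}(q^{n+1}-q^{n-1})$ yields the bulk energy $\frac{\lambda\eta^2}{8}(\|q^{n+1}\|^2-\|q^{n-1}\|^2)$. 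The remaining pieces are both paired against $w^n$. Next I would test \eqref{ACNS3} with $\Delta t\,\frac{u^{n+1}+u^{n-1}}{2}$: the time-derivative telescopes to $\frac14(\|u^{n+1}\|^2-\|u^{n-1}\|^2)$, the artificial-compression term (after integrating by parts) to $\frac{\beta}{2}(\|\nabla\cdot u^{n+1}\|^2-\|\nabla\cdot u^{n-1}\|^2)$, the viscous term gives $\nu\Delta t\|\nabla\frac{u^{n+1}+u^{n-1}}{2}\|^2$, and the skew-symmetrized convection terms vanish via $((b\cdot\nabla)v,v)+\frac12((\nabla\cdot b)v,v)=0$ for $v|_{\partial\Omega}=0$. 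This leaves the pressure term $-\frac{\Delta t}{2}(p^n,\nabla\cdot u^{n+1}+\nabla\cdot u^{n-1})$ and the coupling term $\frac{\Delta t}{M}(\dot\phi^{n+1},w^n)$.

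\textbf{Coupling cancellation and the pressure equation.} Rewriting \eqref{ACNS1} as $\frac{1}{M}\dot\phi^{n+1}=\lambda\Delta\frac{\phi^{n+1}+\phi^{n-1}}{2}-\lambda\phi^n\frac{q^{n+1}+q^{n-1}}{2}$, the two leftover phase-side pieces from the previous step sum to $-\frac{\Delta t}{M}(\dot\phi^{n+1},w^n)$, which cancels the momentum-side coupling term exactly---this is the step that removes any time-step constraint. I would then test \eqref{ACNS4} with $\frac{\Delta t}{2}(p^{n+1}+p^{n-1})$, producing the telescoping pressure energy $\frac{\alpha\Delta t^2}{2}(\|p^{n+1}\|^2-\|p^{n-1}\|^2)$ together with a cross term $\frac{\Delta t}{2}(\nabla\cdot u^n,p^{n+1}+p^{n-1})$. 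Summing everything over $n=1,\dots,N-1$, the telescoping quantities collapse to the stated endpoint terms, and a discrete summation by parts relates the two pressure cross sums (from \eqref{ACNS3} and \eqref{ACNS4}): their interior contributions cancel, leaving boundary terms at $n=0,1$ (which become the last two terms on the right-hand side) and indefinite boundary terms $-\frac{\Delta t}{2}(p^{N-1},\nabla\cdot u^N)+\frac{\Delta t}{2}(p^N,\nabla\cdot u^{N-1})$ at $n=N-1,N$.

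\textbf{Closing via $\alpha\beta\ge\frac14$ and the main obstacle.} I would absorb the indefinite $n=N$ cross terms into the positive endpoint quantities $\frac{\beta}{2}(\|\nabla\cdot u^N\|^2+\|\nabla\cdot u^{N-1}\|^2)$ and $\frac{\alpha\Delta t^2}{2}(\|p^N\|^2+\|p^{N-1}\|^2)$ by Young's inequality with weight $\beta$, e.g. $\frac{\Delta t}{2}(p^{N-1},\nabla\cdot u^N)\le\frac{\beta}{2}\|\nabla\cdot u^N\|^2+\frac{\Delta t^2}{8\beta}\|p^{N-1}\|^2$. The residual is $\frac{\Delta t^2}{2}\big(\alpha-\frac{1}{4\beta}\big)(\|p^N\|^2+\|p^{N-1}\|^2)$, nonnegative exactly when $\alpha\beta\ge\frac14$; dropping this nonnegative endpoint quantity turns the identity into the stated inequality. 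I expect the main obstacle to be precisely the bookkeeping of these last two steps: getting the discrete summation by parts of the two pressure cross sums right so that the correct boundary terms survive at both ends, and choosing the Young weight so that exactly the threshold $\alpha\beta\ge\frac14$ emerges. The telescoping identities and the coupling cancellation are routine once the multipliers $\frac{\Delta t}{M}\dot\phi^{n+1}$, $\Delta t\,\frac{u^{n+1}+u^{n-1}}{2}$, and $\frac{\Delta t}{2}(p^{n+1}+p^{n-1})$ are fixed.
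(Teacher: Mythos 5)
Your proposal is correct and is essentially the paper's own proof: the same multipliers (a $\dot{\phi}^{n+1}$-type test for \eqref{ACNS1}, $\frac{u^{n+1}+u^{n-1}}{2}$ for \eqref{ACNS3}, $\frac{p^{n+1}+p^{n-1}}{2}$ for \eqref{ACNS4}), the same exact cancellation of the coupling terms $\pm\frac{1}{M}(\dot{\phi}^{n+1},w^n)$, the same telescoping/summation-by-parts treatment of the pressure--divergence cross terms, and the same Young-inequality absorption yielding the threshold $\alpha\beta\geq\frac{1}{4}$. The only cosmetic difference is that you test \eqref{ACNS1} with $\frac{\Delta t}{M}\dot{\phi}^{n+1}$ and reuse the equation to generate $-\frac{\Delta t}{M}(\dot{\phi}^{n+1},w^n)$, whereas the paper tests directly with $\frac{\phi^{n+1}-\phi^{n-1}}{2M\Delta t}$ (and handles \eqref{ACNS2} as a separate test), which produces the identical intermediate identity.
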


\begin{proof}
Taking the inner product of \eqref{ACNS1} with $\frac{\phi^{n+1}-\phi^{n-1}}{2M \Delta t}$ gives
\begin{align}
\frac{1}{M}\Vert \dot{\phi}^{n+1}\Vert^2&-\frac{1}{M}\left( \dot{\phi}^{n+1}, \frac{u^{n+1}+u^{n-1}}{2}\nabla \phi^n \right)\label{pf1}\nonumber\\
&+\frac{\lambda}{\Delta t}\left(\frac{1}{4}\Vert \nabla \phi^{n+1}\Vert^2-\frac{1}{4}\Vert \nabla \phi^{n-1}\Vert^2\right)\\
&+\frac{\lambda}{2\Delta t}\left(\phi^n\frac{q^{n+1}+q^{n-1}}{2}, \phi^{n+1}-\phi^{n-1}\right)=0.\nonumber
\end{align}
Taking the inner product of \eqref{ACNS2} with $\lambda\eta^2\frac{q^{n+1}+q^{n-1}}{8\Delta t}$ gives
\begin{align}
\frac{\lambda\eta^2}{8\Delta t} \left(\Vert q^{n+1}\Vert^2-\Vert q^{n-1}\Vert^2 \right)= \frac{\lambda}{2\Delta t} \left( \phi^n \left(\phi^{n+1}-\phi^{n-1}\right), \frac{q^{n+1}+q^{n-1}}{2}\right).\label{pf2}
\end{align}
Taking the inner product of \eqref{ACNS3} with $\frac{u^{n+1}+u^{n-1}}{2}$ gives
\begin{align}
\frac{1}{4\Delta t}\left(\Vert u^{n+1}\Vert^2+2\beta \Vert \nabla \cdot u^{n+1}\Vert^2\right)&-\frac{1}{4\Delta t}\left(\Vert u^{n-1}\Vert^2+2\beta \Vert \nabla \cdot u^{n-1}\Vert^2\right)\label{pf3}\\
&+\nu\Vert \nabla\frac{u^{n+1}+u^{n-1}}{2}\Vert^2-\frac{1}{2}\left( p_n, \nabla \cdot u^{n+1}+\nabla \cdot u^{n-1}\right)\nonumber\\
&+\frac{1}{M}\left(\dot{\phi}^{n+1} \nabla \phi^n, \frac{u^{n+1}+u^{n-1}}{2}\right)=0.\nonumber
\end{align}
Taking the inner product of \eqref{ACNS4} with $\frac{p^{n+1}+p^{n-1}}{2}$ yields
\begin{align}
\frac{1}{2}\alpha\Delta t \left(\Vert p^{n+1}\Vert^2-\Vert p^{n-1}\Vert^2\right)+\frac{1}{2}\left(\nabla \cdot u^n, p^{n+1}+p^{n-1}\right)=0.\label{pf4}
\end{align}

By adding \eqref{pf1}, \eqref{pf2}, \eqref{pf3} and \eqref{pf4} we have
\begin{align}
\frac{1}{M}\Vert \dot{\phi}^{n+1}\Vert^2&+\frac{\lambda}{\Delta t}\left(\frac{1}{4}\Vert \nabla \phi^{n+1}\Vert^2+\frac{1}{4}\Vert \nabla \phi^{n}\Vert^2\right)-\frac{\lambda}{\Delta t}\left(\frac{1}{4}\Vert \nabla \phi^{n}\Vert^2+\frac{1}{4}\Vert \nabla \phi^{n-1}\Vert^2\right)\nonumber\\
&+\frac{\lambda\eta^2}{8\Delta t} \left(\Vert q^{n+1}\Vert^2-\Vert q^{n-1}\Vert^2 \right)
+\nu\Vert\nabla \frac{u^{n+1}+u^{n-1}}{2}\Vert^2\label{pf6}\\
&+\frac{1}{4\Delta t}\left(\Vert u^{n+1}\Vert^2+2\beta \Vert \nabla \cdot u^{n+1}\Vert^2\right)-\frac{1}{4\Delta t}\left(\Vert u^{n}\Vert^2+2\beta \Vert \nabla \cdot u^{n}\Vert^2\right)\nonumber\\
&+\frac{1}{4\Delta t}\left(\Vert u^{n}\Vert^2+2\beta \Vert \nabla \cdot u^{n}\Vert^2\right)-\frac{1}{4\Delta t}\left(\Vert u^{n-1}\Vert^2+2\beta \Vert \nabla \cdot u^{n-1}\Vert^2\right)\nonumber\\
&+\frac{1}{2}\alpha\Delta t \left(\Vert p^{n+1}\Vert^2-\Vert p^{n}\Vert^2\right)+\frac{1}{2}\alpha\Delta t \left(\Vert p^{n}\Vert^2-\Vert p^{n-1}\Vert^2\right)\nonumber\\
&+\frac{1}{2}\left(\nabla \cdot u^n, p^{n+1}+p^{n-1}\right)-\frac{1}{2}\left( p^n, \nabla\cdot u^{n+1}+\nabla \cdot u^{n-1}\right)=0.\nonumber
\end{align}
The last two terms of \eqref{pf6} can be rewritten as
\begin{align}
&\frac{1}{2}\left(\nabla \cdot u^n, p^{n+1}+p^{n-1}\right)-\frac{1}{2}\left( p^n, \nabla\cdot u^{n+1}+\nabla \cdot u^{n-1}\right)\nonumber\\
&=\frac{1}{2}\left[(p^{n+1}, \nabla \cdot u^n)-(p^n, \nabla \cdot u^{n-1})\right]-\frac{1}{2}\left[ (p^n,\nabla \cdot u^{n+1})-(p^{n-1}, \nabla \cdot u^n)\right].\nonumber
\end{align}
Then summing up \eqref{pf6} from $n=1$ to $n=N-1$ and multiplying through by $\Delta t$ gives
\begin{align}
\Delta t\sum_{n=1}^{N-1}\frac{1}{M}\Vert \dot{\phi}^{n+1}\Vert^2&+\left(\frac{\lambda}{4}\Vert \nabla \phi^{N}\Vert^2+\frac{\lambda}{4}\Vert \nabla \phi^{N-1}\Vert^2\right)+\frac{\lambda\eta^2}{8} (\Vert q^{N}\Vert^2+\Vert q^{N-1}\Vert^2)\nonumber\\
&+\Delta t\sum_{n=1}^{N-1}\nu\Vert\nabla \frac{u^{n+1}+u^{n-1}}{2}\Vert^2+\frac{1}{4}\left(\Vert u^{N}\Vert^2+2\beta \Vert \nabla \cdot u^{N}\Vert^2\right)\nonumber\\
&+\frac{1}{4}\left(\Vert u^{N-1}\Vert^2+2\beta \Vert \nabla \cdot u^{N-1}\Vert^2\right)+\frac{1}{2}\alpha\Delta t^2 \left(\Vert p^{N}\Vert^2+\Vert p^{N-1}\Vert^2\right)\nonumber\\
&+\frac{1}{2}\Delta t(p^{N}, \nabla \cdot u^{N-1})- \frac{1}{2}\Delta t(p^{N-1},\nabla \cdot u^{N})\label{pf7}\\
&=\left(\frac{\lambda}{4}\Vert \nabla \phi^{1}\Vert^2+\frac{\lambda}{4}\Vert \nabla \phi^{0}\Vert^2\right)
+\frac{\lambda\eta^2}{8} (\Vert q^{1}\Vert^2+\Vert q^{0}\Vert^2)\nonumber\\
&\quad+\frac{1}{4}\left(\Vert u^{1}\Vert^2+2\beta \Vert \nabla \cdot u^{1}\Vert^2\right)+\frac{1}{4}\left(\Vert u^{0}\Vert^2+2\beta \Vert \nabla \cdot u^{0}\Vert^2\right)\nonumber\\
&\quad+\frac{1}{2}\alpha\Delta t^2 \left(\Vert p^{1}\Vert^2+\Vert p^{0}\Vert^2\right)+\frac{1}{2}\Delta t(p^{1}, \nabla \cdot u^{0})- \frac{1}{2}\Delta t(p^{0},\nabla \cdot u^{1}).\nonumber
\end{align}

\noindent The last two terms on the left hand side of \eqref{pf7} can be bounded as, for $\forall \beta >0$,
\begin{align}
&\Bigg\vert \frac{1}{2}\Delta t(p^{N}, \nabla \cdot u^{N-1})-\frac{1}{2} \Delta t(p^{N-1},\nabla \cdot u^{N}) \Bigg\vert
\\
&\quad\leq \frac{\beta}{2} \Vert \nabla \cdot u^{N-1}\Vert^2 +\frac{1}{8\beta}\Delta t^2\Vert p^N\Vert^2+\frac{\beta}{2} \Vert \nabla \cdot u^{N}\Vert^2 +\frac{1}{8\beta}\Delta t^2\Vert p^{N-1}\Vert^2.\nonumber
\end{align}
So if $\alpha\geq \frac{1}{4\beta}$, \eqref{pf7} reduces to
\begin{align}
&\Delta t\sum_{n=1}^{N-1}\frac{1}{M}\Vert \dot{\phi}^{n+1}\Vert^2
+\left(\frac{\lambda}{4}\Vert \nabla \phi^{N}\Vert^2+\frac{\lambda}{4}\Vert \nabla \phi^{N-1}\Vert^2\right)
\\
&
+\frac{\lambda\eta^2}{8} (\Vert q^{N}\Vert^2+\Vert q^{N-1}\Vert^2)
+\Delta t\sum_{n=1}^{N-1}\nu\Vert\nabla \frac{u^{n+1}+u^{n-1}}{2}\Vert^2+\frac{1}{4}\left(\Vert u^{N}\Vert^2+\Vert u^{N-1}\Vert^2\right)\nonumber\\
&\leq \left(\frac{\lambda}{4}\Vert \nabla \phi^{1}\Vert^2+\frac{\lambda}{4}\Vert \nabla \phi^{0}\Vert^2\right)
+\frac{\lambda\eta^2}{8} (\Vert q^{1}\Vert^2+\Vert q^{0}\Vert^2)\nonumber\\
&\quad+\frac{1}{4}\left(\Vert u^{1}\Vert^2+2\beta \Vert \nabla \cdot u^{1}\Vert^2\right)+\frac{1}{4}\left(\Vert u^{0}\Vert^2+2\beta \Vert \nabla \cdot u^{0}\Vert^2\right)\nonumber\\
&\quad+\frac{1}{2}\alpha\Delta t^2 \left(\Vert p^{1}\Vert^2+\Vert p^{0}\Vert^2\right)+\frac{1}{2}\Delta t(p^{1}, \nabla \cdot u^{0})- \frac{1}{2}\Delta t(p^{0},\nabla \cdot u^{1}).\nonumber
\end{align}

\end{proof}

\section{ACSAV method for the ACNS model}\label{sec:acsav}
Inspired by the new decoupling strategy proposed in \cite{JY21b}, we now proceed to incorporate the artificial compression technique with an SAV decoupling strategy for developing a highly efficient, fully decoupled scheme.  
This is achieved by introducing a scalar auxiliary variable to handle the lagged coupling terms in the ACNS model, namely the phase induced stress term in the NS equations and the fluid induced transport term in the Allen--Cahn equations. An artificial compression based scalar auxiliary variable scheme (ACSAV) is then derived and proved to be long time stable without any time step constraints.

Define a scalar auxiliary variable $r(t)$ by
\begin{align}
r(t)=exp(-\frac{t}{T}).\nonumber
\end{align}
Then we have
\begin{align}\label{drdt0}
&
\frac{dr}{dt}=-\frac{1}{T} r + \frac{1}{exp(-\frac{t}{T})} \int_{\Omega} \Big((u \cdot \nabla\phi)\mu-(u \cdot \nabla\phi)\mu \Big)\, dx .
\end{align}
To decouple the phase field variable $\phi$ and fluid velocity $u$, one usually needs to lag the coupling terms $u \cdot \nabla\phi$ in \eqref{ACNS-1}  and $\mu\nabla\phi$ in \eqref{ACNS-3} to the previous time steps and this inevitably results in a time step condition to ensure long time stability. The introduction of the zero term $\displaystyle\int_{\Omega} \Big((u \cdot \nabla\phi)\mu-(u \cdot \nabla\phi)\mu \Big)\, dx$ in \eqref{drdt0} makes it possible to cancel out the same term in the partitioned scheme with lagged coupling terms. 

The governing equations of the Allen-Cahn-Navier-Stokes (ACNS) system are then equivalent to:
\begin{numcases}{}
\partial_t\phi+\frac{r(t)}{exp(-\frac{t}{T})} u\cdot\nabla \phi = -M\mu,\label{eq:ACSAV-AC1}\\
\mu = \lambda(-\Delta \phi + q\phi),\label{eq:ACSAV-AC2}\\
\partial_t q = \tfrac{2}{\eta^2}\phi\partial_t\phi,\label{eq:ACSAV-AC3}\\
\partial_tu+ (u\cdot \nabla) u  -\nu \Delta u + \nabla p = \frac{r(t)}{exp(-\frac{t}{T})}\mu\nabla\phi,\label{eq:ACSAV-NS1}\\
\nabla \cdot u = 0,\label{eq:ACSAV-NS2}\\
\frac{dr}{dt}=-\frac{1}{T} r + \frac{1}{exp(-\frac{t}{T})} \int_{\Omega} \Big((u \cdot \nabla\phi)\mu-(u \cdot \nabla\phi)\mu \Big)\, dx .\label{eq:ACSAV-drdt}
\end{numcases}

We now propose an unconditionally stable, second order, linear, fully decoupled ACSAV method given by
\begin{Aalgorithm}\label{Algo-ACNS-ACSAV}
Given $u^{n-1}$, $u^n$, $p^{n-1}$, $p^n$, $\phi^{n-2}$, $\phi^{n-1}$, $\phi^n$, $q^{n-1}$, $q^n$, $r^{n-1}$, $r^n$, $\alpha>0$, $\beta>0$, find $u^{n+1}$, $p^{n+1}$, $\phi^{n+1}$, $q^{n+1}$, and $r^{n+1}$ satisfying
\begin{align}
&\frac{\phi^{n+1}-\phi^{n-1}}{2\Delta t}+\frac{r^{n+1}+r^{n-1}}{2exp(-\frac{t^n}{T})} u^n\cdot\nabla \phi^n = -M\bar{\mu}^n,\label{ACNS-ACSAV1}\\
&\bar{\mu}^n = \lambda(-\Delta\frac{\phi^{n+1}+\phi^{n-1}}{2}+\frac{q^{n+1}+q^{n-1}}{2}\phi^n),\label{mubareq}\\
&\frac{q^{n+1}-q^{n-1}}{2\Delta t}=\frac{2}{\eta^2} \phi^n \frac{\phi^{n+1}-\phi^{n-1}}{2\Delta t}, \label{ACNS-ACSAV2}\\
&\frac{u^{n+1}-u^{n-1}}{2\Delta t}+ (u^{n}\cdot\nabla) \left(\frac{u^{n+1}+u^{n-1}}{2}\right)\notag\\
& \qquad+\frac{1}{2}(\nabla\cdot u^n)\left(\frac{u^{n+1}+u^{n-1}}{2}\right) -\beta \Delta t^{-1}\nabla \left( \nabla \cdot u^{n+1} - \nabla \cdot u^{n-1}\right)\label{ACNS-ACSAV3}\\
&\qquad-\nu\Delta \left(\frac{u^{n+1}+u^{n-1}}{2}\right)+\nabla p^n =\frac{r^{n+1}+r^{n-1}}{2exp(-\frac{t^n}{T})} \mu^n\nabla\phi^n ,\nonumber\\
&\mu^n = -\frac{1}{M}\left( \frac{3\phi^n-4\phi^{n-1}+\phi^{n-2}}{2\Delta t}+\frac{r^n}{exp(-\frac{t^n}{T})}u^n\cdot\nabla\phi^n \right),\label{mueq}\\
&\alpha\Delta t\left(p^{n+1}-p^{n-1}\right)+\nabla \cdot u^n=0,\label{ACNS-ACSAV4}\\
&\frac{r^{n+1}-r^{n-1}}{2\Delta t}
= -\frac{1}{T} \frac{r^{n+1}+r^{n-1}}{2}+ \frac{1}{exp(-\frac{t^{n}}{T})} \int_{\Omega}( u^{n} \cdot \nabla \phi^{n}) \bar{\mu}^n  \, dx \label{drdt1}\\
&\qquad - \frac{1}{exp(-\frac{t^{n}}{T})} \int_{\Omega} (\frac{u^{n+1}+u^{n-1}}{2} \cdot \nabla \phi^{n}) \cdot\mu^n \, dx \notag.
\end{align}

\end{Aalgorithm}

Note that we could rewrite \eqref{ACNS-ACSAV2} as \eqref{q}
and replace $q^{n+1}$ in \eqref{mubareq} so that \eqref{mubareq} becomes
\begin{equation}\label{newmubar}
\bar{\mu}^n = \lambda(-\Delta\frac{\phi^{n+1}+\phi^{n-1}}{2})+\lambda\Big(q^{n-1}+\frac{1}{\eta^2} \phi^n \left(\phi^{n+1}-\phi^{n-1}\right)\Big)\phi^n.
\end{equation}
Therefore, as in the CNLFAC scheme, adding the intermediate variable $q$ does not increase the computational cost. Although the variables $\phi^{n+1}$ and $u^{n+1}$ are coupled with $r^{n+1}$ in Algorithm \ref{Algo-ACNS-ACSAV}, we will present later that an efficient splitting procedure can be employed to separate the computation of $\phi^{n+1}$ from $r^{n+1}$ and $u^{n+1}$ from $r^{n+1}$ resulting in a fully decoupled  scheme for computing $\phi^{n+1}$, $u^{n+1}$, $p^{n+1}$.

\begin{theorem}\label{acsavthm}
[Stability of Algorithm \eqref{Algo-ACNS-ACSAV}] Taking $\alpha$ and $\beta$ such that $\alpha\beta\geq \frac{1}{4}$, then for any $N\geq 3$
\begin{align}
\Delta tM\sum_{n=2}^{N-1}\Vert \bar{\mu}^n\Vert^2&+\left(\frac{\lambda}{4}\Vert \nabla \phi^{N}\Vert^2+\frac{\lambda}{4}\Vert \nabla \phi^{N-1}\Vert^2\right)+\frac{\lambda\eta^2}{8} (\Vert q^{N}\Vert^2+\Vert q^{N-1}\Vert^2)\nonumber\\
&
+\frac{1}{4}\Vert u^{N}\Vert^2+\frac{1}{4}\Vert u^{N-1}\Vert^2
+\Delta t\sum_{n=2}^{N-1}\nu\Vert\nabla \frac{u^{n+1}+u^{n-1}}{2}\Vert^2
\notag\\
&+\frac{1}{4}(|r^{N}|^2+|r^{N-1}|^2)+\frac{\Delta t}{T}\sum\limits_{n=2}^{N-1}|\frac{r^{n+1}+r^{n-1}}{2}|^2 \label{acsavineq}\\
&\leq \left(\frac{\lambda}{4}\Vert \nabla \phi^{2}\Vert^2+\frac{\lambda}{4}\Vert \nabla \phi^{1}\Vert^2\right)
+\frac{\lambda\eta^2}{8} (\Vert q^{2}\Vert^2+\Vert q^{1}\Vert^2)\nonumber\\
&\quad+\frac{1}{4}\left(\Vert u^{2}\Vert^2+2\beta \Vert \nabla \cdot u^{2}\Vert^2\right)+\frac{1}{4}\left(\Vert u^{1}\Vert^2+2\beta \Vert \nabla \cdot u^{1}\Vert^2\right)\nonumber\\
&\quad+\frac{1}{2}\alpha\Delta t^2 \left(\Vert p^{2}\Vert^2+\Vert p^{1}\Vert^2\right) + \frac{1}{4}(|r^{2}|^2+|r^{1}|^2)\nonumber\\
&\quad +\frac{1}{2}\Delta t(p^{2}, \nabla \cdot u^{1})- \frac{1}{2}\Delta t(p^{1},\nabla \cdot u^{2}).\nonumber
\end{align}

\end{theorem}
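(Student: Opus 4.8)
The plan is to mirror the energy argument of Theorem~\ref{ACNS:stability}, testing each discrete equation against the multiplier that reproduces its energy contribution, and then to use the scalar auxiliary variable to annihilate the two lagged coupling terms. First I would take the $L^2$ inner product of \eqref{ACNS-ACSAV1} with $\bar\mu^n$: the right-hand side immediately yields $-M\Vert\bar\mu^n\Vert^2$, while for $(\frac{\phi^{n+1}-\phi^{n-1}}{2\Delta t},\bar\mu^n)$ I would substitute \eqref{mubareq} and integrate by parts using $\partial_n\phi|_{\partial\Omega}=0$, producing the telescoping gradient energy $\frac{\lambda}{4\Delta t}(\Vert\nabla\phi^{n+1}\Vert^2-\Vert\nabla\phi^{n-1}\Vert^2)$ together with the $q$-coupling $\frac{\lambda}{2\Delta t}(\frac{q^{n+1}+q^{n-1}}{2}\phi^n,\phi^{n+1}-\phi^{n-1})$ and the lagged transport coupling $\frac{r^{n+1}+r^{n-1}}{2e^{-t^n/T}}(u^n\cdot\nabla\phi^n,\bar\mu^n)$. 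Testing \eqref{ACNS-ACSAV2} against $\lambda\eta^2\frac{q^{n+1}+q^{n-1}}{8\Delta t}$ converts the $q$-coupling into $\frac{\lambda\eta^2}{8\Delta t}(\Vert q^{n+1}\Vert^2-\Vert q^{n-1}\Vert^2)$ exactly as in \eqref{pf2}, so those cross terms cancel.

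Next I would test the momentum equation \eqref{ACNS-ACSAV3} with $\frac{u^{n+1}+u^{n-1}}{2}$. The explicitly skew-symmetrized convection $(u^n\cdot\nabla)(\cdot)+\tfrac12(\nabla\cdot u^n)(\cdot)$ vanishes on the diagonal, and integrating the grad--div and viscous terms by parts with $u|_{\partial\Omega}=0$ reproduces the left-hand side of \eqref{pf3}, leaving the pressure term $-\frac12(p^n,\nabla\cdot u^{n+1}+\nabla\cdot u^{n-1})$ and the lagged stress coupling $\frac{r^{n+1}+r^{n-1}}{2e^{-t^n/T}}(\frac{u^{n+1}+u^{n-1}}{2}\cdot\nabla\phi^n,\mu^n)$ on the right. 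Testing \eqref{ACNS-ACSAV4} with $\frac{p^{n+1}+p^{n-1}}{2}$ gives the artificial-compression energy and the divergence--pressure cross term as in \eqref{pf4}. Finally I would multiply the scalar relation \eqref{drdt1} by $\frac{r^{n+1}+r^{n-1}}{2}$, obtaining $\frac{1}{4\Delta t}(|r^{n+1}|^2-|r^{n-1}|^2)+\frac{1}{T}|\frac{r^{n+1}+r^{n-1}}{2}|^2$ on one side and the two coupling integrals $\frac{r^{n+1}+r^{n-1}}{2e^{-t^n/T}}(u^n\cdot\nabla\phi^n,\bar\mu^n)$ and $-\frac{r^{n+1}+r^{n-1}}{2e^{-t^n/T}}(\frac{u^{n+1}+u^{n-1}}{2}\cdot\nabla\phi^n,\mu^n)$ on the other.

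The decisive step is to add all five identities. By construction the first coupling integral from \eqref{drdt1} cancels the transport coupling generated by the phase equation, and the second cancels the stress coupling generated by the momentum equation; this is precisely why \eqref{drdt0}--\eqref{drdt1} were introduced, and after cancellation no coupling term survives. What remains has the same algebraic shape as \eqref{pf6}, augmented only by the nonnegative auxiliary contributions $\frac{1}{4\Delta t}(|r^{n+1}|^2-|r^{n-1}|^2)+\frac1T|\frac{r^{n+1}+r^{n-1}}{2}|^2$. I would then rewrite the two remaining divergence--pressure cross terms as the telescoping differences displayed after \eqref{pf6}, sum over $n=2,\dots,N-1$ (the lower index being forced by the appearance of $\phi^{n-2}$ in \eqref{mueq}, hence $N\ge3$), and multiply by $\Delta t$ to collapse each difference into its endpoint values, producing the analogue of \eqref{pf7}.

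To finish I would treat the surviving boundary cross terms $\frac12\Delta t(p^N,\nabla\cdot u^{N-1})-\frac12\Delta t(p^{N-1},\nabla\cdot u^N)$ by Cauchy--Schwarz and Young's inequality, bounding them by $\frac{\beta}{2}\Vert\nabla\cdot u^{N-1}\Vert^2+\frac{\beta}{2}\Vert\nabla\cdot u^{N}\Vert^2+\frac{\Delta t^2}{8\beta}(\Vert p^N\Vert^2+\Vert p^{N-1}\Vert^2)$; the hypothesis $\alpha\beta\ge\frac14$ (equivalently $\alpha\ge\frac1{4\beta}$) then makes the leftover pressure coefficient $\frac12\alpha-\frac1{8\beta}$ nonnegative while the divergence contributions cancel exactly, so both may be dropped from the left, leaving precisely \eqref{acsavineq}. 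I expect the main obstacle to be bookkeeping the exact form of the two coupling integrals: one must confirm that the phase equation naturally pairs with $\bar\mu^n$ and the lagged velocity $u^n$ while the momentum equation pairs with $\mu^n$ and the average $\frac{u^{n+1}+u^{n-1}}{2}$, so that each matches the corresponding integral in \eqref{drdt1} in both factor and sign; everything else is a routine repetition of the CNLFAC computation.
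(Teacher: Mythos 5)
Your proposal is correct and follows essentially the same route as the paper's own proof: the identical choice of test functions (including $\bar{\mu}^n$ for the phase equation, $\frac{u^{n+1}+u^{n-1}}{2}$ for momentum, $\frac{p^{n+1}+p^{n-1}}{2}$ for the artificial compression equation, and $\frac{r^{n+1}+r^{n-1}}{2}$ for the SAV relation), the same cancellation of the two lagged coupling integrals, the same telescoping summation from $n=2$ to $N-1$, and the same Young-inequality treatment of the pressure--divergence cross terms under $\alpha\beta\geq\frac{1}{4}$. No gaps; your closing remarks on sign/factor matching of the coupling integrals and on the exact cancellation of the $\beta\Vert\nabla\cdot u\Vert^2$ terms are precisely the checks the paper's computation carries out.
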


\begin{proof}
Taking the inner product of \eqref{ACNS-ACSAV1} with $\bar{\mu}^n$, the inner product of \eqref{mubareq} with $-\frac{\phi^{n+1}-\phi^{n-1}}{2\Delta t}$, the inner product of \eqref{ACNS-ACSAV2} with $\frac{\lambda\eta^2}{2}\frac{q^{n+1}+q^{n-1}}{2}$ and then adding the equations together gives
\begin{align}
\frac{r^{n+1}+r^{n-1}}{2exp(-\frac{t^n}{T})} (u^n\cdot\nabla \phi^n,\bar{\mu}^n) &+ M\|\bar{\mu}^n\|^2 + \frac{\lambda}{4\Delta t}(\|\nabla\phi^{n+1}\|^2-\|\nabla\phi^{n-1}\|^2) \notag\\
&+ \frac{\lambda\eta^2}{8\Delta t}(\|q^{n+1}\|^2-\|q^{n-1}\|^2) = 0 \label{npf1}.
\end{align}
Taking the inner product of \eqref{ACNS-ACSAV3} with $\frac{u^{n+1}+u^{n-1}}{2} $ gives
\begin{align}
\frac{1}{4\Delta t}\left(\Vert u^{n+1}\Vert^2+2\beta \Vert \nabla \cdot u^{n+1}\Vert^2\right)&-\frac{1}{4\Delta t}\left(\Vert u^{n-1}\Vert^2+2\beta \Vert \nabla \cdot u^{n-1}\Vert^2\right)\label{npf2}\\
&+\nu\Vert \nabla\frac{u^{n+1}+u^{n-1}}{2}\Vert^2-\frac{1}{2}\left( p^n, \nabla \cdot u^{n+1}+\nabla \cdot u^{n-1}\right)\notag\\
&= \frac{r^{n+1}+r^{n-1}}{2exp(-\frac{t^n}{T})}(\mu^n\nabla\phi^n,\frac{u^{n+1}+u^{n-1}}{2}).\nonumber
\end{align}
Taking the inner product of \eqref{ACNS-ACSAV4} with $\frac{p^{n+1}+p^{n-1}}{2}$ yields
\begin{align}
\frac{1}{2}\alpha\Delta t \left(\Vert p^{n+1}\Vert^2-\Vert p^{n-1}\Vert^2\right)+\frac{1}{2}\left(\nabla \cdot u^n, p^{n+1}+p^{n-1}\right)=0.\label{npf3}
\end{align}
Taking the product of \eqref{drdt1} with $\frac{r^{n+1}+r^{n-1}}{2}$ yields
\begin{align}
\frac{1}{4\Delta t}(|r^{n+1}|^2-|r^{n-1}|^2)&+\frac{1}{T}|\frac{r^{n+1}+r^{n-1}}{2}|^2\nonumber\\
&= \frac{r^{n+1}+r^{n-1}}{2exp(-\frac{t^n}{T})} \int_{\Omega}( u^{n} \cdot \nabla \phi^{n}) \bar{\mu}^n  \, dx\nonumber \\
&\quad- \frac{r^{n+1}+r^{n-1}}{2exp(-\frac{t^n}{T})}\int_{\Omega} (\frac{u^{n+1}+u^{n-1}}{2} \cdot \nabla \phi^{n}) \cdot\mu^n \, dx. \label{npf4}
\end{align}

By adding \eqref{npf1}, \eqref{npf2}, \eqref{npf3} and \eqref{npf4} we have
\begin{equation}
\begin{split}\label{npf5}
M\|\bar{\mu}^n\|^2 &+ \frac{\lambda}{4\Delta t}(\|\nabla\phi^{n+1}\|^2-\|\nabla\phi^{n-1}\|^2)  + \frac{\lambda\eta^2}{8\Delta t}(\|q^{n+1}\|^2-\|q^{n-1}\|^2)  \\
&+\frac{1}{4\Delta t}\left(\Vert u^{n+1}\Vert^2+2\beta \Vert \nabla \cdot u^{n+1}\Vert^2\right)-\frac{1}{4\Delta t}\left(\Vert u^{n-1}\Vert^2+2\beta \Vert \nabla \cdot u^{n-1}\Vert^2\right)\\
&+\nu\Vert \nabla\frac{u^{n+1}+u^{n-1}}{2}\Vert^2 + \frac{1}{2}\alpha\Delta t \left(\Vert p^{n+1}\Vert^2-\Vert p^{n-1}\Vert^2\right)\\
&+\frac{1}{4\Delta t}(|r^{n+1}|^2-|r^{n-1}|^2)+\frac{1}{T}|\frac{r^{n+1}+r^{n-1}}{2}|^2 \\
& +\frac{1}{2}\left(\nabla \cdot u^n, p^{n+1}+p^{n-1}\right) - \frac{1}{2}\left( p^n, \nabla \cdot u^{n+1}+\nabla \cdot u^{n-1}\right)=0.
\end{split}
\end{equation}
The last two terms on the left hand side of \eqref{npf5} can be rewritten as
\begin{align}
\frac{1}{2}&\left(\nabla \cdot u^n, p^{n+1}+p^{n-1}\right) - \frac{1}{2} \left( p^n, \nabla\cdot u^{n+1}+\nabla \cdot u^{n-1}\right)\nonumber \\
&=\frac{1}{2}\left[(p^{n+1}, \nabla \cdot u^n)-(p^n, \nabla \cdot u^{n-1})\right] -\frac{1}{2}\left[ (p^n,\nabla \cdot u^{n+1})-(p^{n-1}, \nabla \cdot u^n)\right] .\nonumber
\end{align}
Then summing up \eqref{npf5} from $n=2$ to $n=N-1$ and multiplying through by $\Delta t$ gives
\begin{align}
\Delta tM\sum_{n=2}^{N-1}\Vert \bar{\mu}^n\Vert^2&+\left(\frac{\lambda}{4}\Vert \nabla \phi^{N}\Vert^2+\frac{\lambda}{4}\Vert \nabla \phi^{N-1}\Vert^2\right)+\frac{\lambda\eta^2}{8} (\Vert q^{N}\Vert^2+\Vert q^{N-1}\Vert^2)\nonumber\\
&
+\frac{1}{4}\left(\Vert u^{N}\Vert^2+2\beta \Vert \nabla \cdot u^{N}\Vert^2\right)+\frac{1}{4}\left(\Vert u^{N-1}\Vert^2+2\beta \Vert \nabla \cdot u^{N-1}\Vert^2\right)\nonumber\\
&
+\Delta t\sum_{n=2}^{N-1}\nu\Vert\nabla \frac{u^{n+1}+u^{n-1}}{2}\Vert^2
+\frac{1}{2}\alpha\Delta t^2 \left(\Vert p^{N}\Vert^2+\Vert p^{N-1}\Vert^2\right)\nonumber\\
&+\frac{1}{4}(|r^{N}|^2+|r^{N-1}|^2)+\frac{\Delta t}{T}\sum\limits_{n=2}^{N-1}|\frac{r^{n+1}+r^{n-1}}{2}|^2 \label{npf7}\\
&+\frac{1}{2}\Delta t(p^{N}, \nabla \cdot u^{N-1})- \frac{1}{2}\Delta t(p^{N-1},\nabla \cdot u^{N})\nonumber\\
&=\left(\frac{\lambda}{4}\Vert \nabla \phi^{2}\Vert^2+\frac{\lambda}{4}\Vert \nabla \phi^{1}\Vert^2\right)
+\frac{\lambda\eta^2}{8} (\Vert q^{2}\Vert^2+\Vert q^{1}\Vert^2)\nonumber\\
&\quad+\frac{1}{4}\left(\Vert u^{2}\Vert^2+2\beta \Vert \nabla \cdot u^{2}\Vert^2\right)+\frac{1}{4}\left(\Vert u^{1}\Vert^2+2\beta \Vert \nabla \cdot u^{1}\Vert^2\right)\nonumber\\
&\quad+\frac{1}{2}\alpha\Delta t^2 \left(\Vert p^{2}\Vert^2+\Vert p^{1}\Vert^2\right) + \frac{1}{4}(|r^{2}|^2+|r^{1}|^2) \nonumber\\
&\quad+\frac{1}{2}\Delta t(p^{2}, \nabla \cdot u^{1})- \frac{1}{2}\Delta t(p^{1},\nabla \cdot u^{2}).\nonumber
\end{align}

The last two terms on the left hand side of \eqref{npf7} can be bounded as
\begin{align}
\frac{1}{2}\Delta t&(p^{N}, \nabla \cdot u^{N-1})- \frac{1}{2}\Delta t(p^{N-1},\nabla \cdot u^{N})\nonumber\\
&\leq \frac{\beta}{2} \Vert \nabla \cdot u^{N-1}\Vert^2 +\frac{1}{8\beta}\Delta t^2\Vert p^N\Vert^2+\frac{\beta}{2} \Vert \nabla \cdot u^{N}\Vert^2 +\frac{1}{8\beta}\Delta t^2\Vert p^{N-1}\Vert^2.\nonumber
\end{align}
So if $\alpha\geq \frac{1}{4\beta}$, \eqref{npf7} reduces to
\begin{align}
\Delta tM\sum_{n=2}^{N-1}\Vert \bar{\mu}^n\Vert^2&+\left(\frac{\lambda}{4}\Vert \nabla \phi^{N}\Vert^2+\frac{\lambda}{4}\Vert \nabla \phi^{N-1}\Vert^2\right)+\frac{\lambda\eta^2}{8} (\Vert q^{N}\Vert^2+\Vert q^{N-1}\Vert^2)\nonumber\\
&
+\frac{1}{4}\Vert u^{N}\Vert^2+\frac{1}{4}\Vert u^{N-1}\Vert^2
+\Delta t\sum_{n=2}^{N-1}\nu\Vert\nabla \frac{u^{n+1}+u^{n-1}}{2}\Vert^2
\label{npf8}\nonumber\\
&+\frac{1}{4}(|r^{N}|^2+|r^{N-1}|^2)+\frac{\Delta t}{T}\sum\limits_{n=2}^{N-1}|\frac{r^{n+1}+r^{n-1}}{2}|^2 \notag\\
&\leq \left(\frac{\lambda}{4}\Vert \nabla \phi^{2}\Vert^2+\frac{\lambda}{4}\Vert \nabla \phi^{1}\Vert^2\right)
+\frac{\lambda\eta^2}{8} (\Vert q^{2}\Vert^2+\Vert q^{1}\Vert^2)\nonumber\\
&\quad+\frac{1}{4}\left(\Vert u^{2}\Vert^2+2\beta \Vert \nabla \cdot u^{2}\Vert^2\right)+\frac{1}{4}\left(\Vert u^{1}\Vert^2+2\beta \Vert \nabla \cdot u^{1}\Vert^2\right)\nonumber\\
&\quad+\frac{1}{2}\alpha\Delta t^2 \left(\Vert p^{2}\Vert^2+\Vert p^{1}\Vert^2\right) + \frac{1}{4}(|r^{2}|^2+|r^{1}|^2)\nonumber\\
&\quad +\frac{1}{2}\Delta t(p^{2}, \nabla \cdot u^{1})- \frac{1}{2}\Delta t(p^{1},\nabla \cdot u^{2}).\nonumber
\end{align}

\end{proof}

\subsection*{Implementation of the ACSAV algorithm} As mentioned before, in the original form of Algorithm \ref{Algo-ACNS-ACSAV}, the variables $\phi^{n+1}$ and $u^{n+1}$ are coupled with $r^{n+1}$. Here we present an efficient splitting procedure to separate the computation of $\phi^{n+1}$ from $r^{n+1}$ and $u^{n+1}$ from $r^{n+1}$.
We will introduce a new scalar $V^{n+1}$ to decompose $\phi^{n+1}$ into two parts yielding two subproblems for the two components $\hat{\phi}^{n+1}$, $\breve{\phi}^{n+1}$ respectively, which do not contain $r^{n+1}$. Similarly, we do a decomposition for $u^{n+1}$. A separate equation for updating $V^{n+1}$ (hence $r^{n+1}$) will be derived.
 
Let
\begin{equation}\label{Vn+1}
V^{n+1}=\frac{r^{n+1}+r^{n-1}}{2exp(-\frac{t^n}{T})},~ \phi^{n+1}=\hat{\phi}^{n+1}+V^{n+1}\breve{\phi}^{n+1},~ u^{n+1}=\hat{u}^{n+1}+V^{n+1}\breve{u}^{n+1}.
\end{equation}
Then instead of solving \eqref{ACNS-ACSAV1}, \eqref{newmubar}, and \eqref{drdt1}, we solve the following two subproblems for $\hat{\phi}^{n+1}$, $\breve{\phi}^{n+1}$ respectively.
\begin{equation}\tag{{ACSAV Subproblem 1}}
\begin{split}
&
\frac{\hat{\phi}^{n+1}-\phi^{n-1}}{2\Delta t}-\lambda M \left( \Delta \frac{\hat{\phi}^{n+1}+\phi^{n-1}}{2}\right)\\
&\qquad\qquad\quad
+\lambda M \phi^n \left(\frac{1}{\eta^2} \phi^n \left(\hat{\phi}^{n+1}-\phi^{n-1}\right)+q^{n-1}\right)=0.
\end{split}
\end{equation}
\begin{equation}\tag{{ACSAV Subproblem 2}}
\frac{\breve{\phi}^{n+1}}{2\Delta t}+ u^n\cdot\nabla \phi^n-\lambda M \left( \Delta \frac{\breve{\phi}^{n+1}}{2}\right)+\lambda M \phi^n \left(\frac{1}{\eta^2} \phi^n \breve{\phi}^{n+1}\right)=0.
\end{equation}
Instead of solving \eqref{ACNS-ACSAV3}, \eqref{mueq}, and \eqref{drdt1}, we solve the following two subproblems for $\hat{u}^{n+1}$, $\breve{u}^{n+1}$ respectively.
\begin{equation}\tag{{ACSAV Subproblem 3}}
\begin{split}
\frac{\hat{u}^{n+1}-u^{n-1}}{2\Delta t} &+ (u^n\cdot\nabla)\left(\frac{\hat{u}^{n+1}+u^{n-1}}{2}\right) + \frac{1}{2}(\nabla\cdot u^n)\left(\frac{\hat{u}^{n+1}+u^{n-1}}{2}\right)\\
&-\beta \Delta t^{-1}\nabla \left( \nabla \cdot \hat{u}^{n+1} - \nabla \cdot u^{n-1}\right)-\nu\Delta \left(\frac{\hat{u}^{n+1}+u^{n-1}}{2}\right)+\nabla p^n = 0.
\end{split}
\end{equation}
\begin{equation}\tag{{ACSAV Subproblem 4}}
\begin{split}
&
\frac{\breve{u}^{n+1}}{2\Delta t} + (u^n\cdot\nabla)\frac{\breve{u}^{n+1}}{2} + \frac{1}{2}(\nabla\cdot u^n)\frac{\breve{u}^{n+1}}{2}\\
&\qquad
-\beta \Delta t^{-1}\nabla \left( \nabla \cdot \breve{u}^{n+1}\right)-\nu\Delta \left(\frac{\breve{u}^{n+1}}{2}\right) = \mu^n\nabla\phi^n.
\end{split}
\end{equation}

Now we can derive an equation for $V^{n+1}$. From \eqref{Vn+1}, we have
\begin{align}
r^{n+1}=2exp(-\frac{t^{n}}{T})V^{n+1} - r^{n-1}. \label{req}
\end{align}
From the decomposition of $\phi^{n+1}$ and equation \eqref{ACNS-ACSAV1} we have the splitting of $\bar{\mu}^n$ as $$\bar{\mu}^n = \hat{\mu}^{n+1} + V^{n+1}\breve{\mu}^{n+1},$$
where
\begin{align}
& \hat{\mu}^{n+1} = -\frac{1}{M}\left(\frac{\hat{\phi}^{n+1}-\phi^{n-1}}{2\Delta t}\right),\label{muhat}\\
& \breve{\mu}^{n+1} = -\frac{1}{M}\left(\frac{\breve{\phi}^{n+1}}{2\Delta t}+u^n\cdot\nabla\phi^n\right). \label{mubre}
\end{align}
Plugging the expression \eqref{req} of $r^{n+1}$ into \eqref{npf4} gives
\begin{align*}
&(\frac{1}{\Delta t}+\frac{1}{T})exp(-\frac{2t^n}{T})(V^{n+1})^2-\frac{1}{\Delta t}exp(-\frac{t^n}{T})r^{n-1}V^{n+1}\\
& =(V^{n+1})^2 \int_{\Omega}( u^{n} \cdot \nabla \phi^{n}) \breve{\mu}^{n+1}  \, dx  +V^{n+1} \int_{\Omega}( u^{n} \cdot \nabla \phi^{n}) \hat{\mu}^{n+1}  \, dx \notag\\
&\quad- (V^{n+1})^2 \int_{\Omega} (\frac{\breve{u}^{n+1}}{2} \cdot \nabla \phi^{n}) \cdot\mu^n \, dx - V^{n+1} \int_{\Omega} (\frac{\hat{u}^{n+1}+u^{n-1}}{2} \cdot \nabla \phi^{n}) \cdot\mu^n \, dx .\notag
\end{align*}
Then we can compute $V^{n+1}$ by solving
\begin{align}\label{V^n+1}
A^{n+1}V^{n+1} + B^{n+1} = 0,
\end{align}
where
\begin{align}
& A^{n+1} = 
(\frac{1}{\Delta t}+\frac{1}{T})exp(-\frac{2t^{n}}{T}) 
 -\int_{\Omega}( u^{n} \cdot \nabla \phi^{n}) \breve{\mu}^{n+1}  \, dx
    +\int_{\Omega} (\frac{\breve{u}^{n+1}}{2} \cdot \nabla \phi^{n}) \cdot\mu^n \, dx,\label{An+1}\\
& B^{n+1} =  -\frac{r^{n-1}}{\Delta t} exp(-\frac{t^{n}}{T})- 
\int_{\Omega}( u^{n} \cdot \nabla \phi^{n}) \hat{\mu}^{n+1}  \, dx
+ \int_{\Omega} (\frac{\hat{u}^{n+1}+u^{n-1}}{2} \cdot \nabla \phi^{n}) \cdot\mu^n \, dx.
\end{align}

\begin{theorem}
The scalar equation \eqref{V^n+1} admits a unique solution.
\end{theorem}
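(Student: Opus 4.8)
The plan is to observe that \eqref{V^n+1} is a scalar \emph{linear} equation in the single unknown $V^{n+1}$, so it admits a unique solution $V^{n+1}=-B^{n+1}/A^{n+1}$ precisely when the coefficient $A^{n+1}$ is nonzero. I would in fact prove the stronger statement $A^{n+1}>0$, by showing that each of the three contributions in \eqref{An+1} is nonnegative and that the first is strictly positive. Since the leading term $\left(\tfrac{1}{\Delta t}+\tfrac{1}{T}\right)\exp(-\tfrac{2t^n}{T})$ is manifestly positive, everything reduces to controlling the two integral terms, which I would do by feeding each one back through the subproblem that defines the quantity it contains.

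For the second term, I would use the definition \eqref{mubre}, i.e. $u^n\cdot\nabla\phi^n=-M\breve{\mu}^{n+1}-\tfrac{\breve{\phi}^{n+1}}{2\Delta t}$, to rewrite
\[
-\int_\Omega(u^n\cdot\nabla\phi^n)\breve{\mu}^{n+1}\,dx
= M\|\breve{\mu}^{n+1}\|^2+\frac{1}{2\Delta t}\int_\Omega\breve{\phi}^{n+1}\breve{\mu}^{n+1}\,dx.
\]
Substituting the ACSAV Subproblem 2 relation back into \eqref{mubre} yields the identity $\breve{\mu}^{n+1}=\lambda\bigl(-\tfrac12\Delta\breve{\phi}^{n+1}+\tfrac{(\phi^n)^2}{\eta^2}\breve{\phi}^{n+1}\bigr)$. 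Pairing this with $\breve{\phi}^{n+1}$ and integrating by parts, with the boundary term vanishing by the homogeneous Neumann condition $\partial_n\breve{\phi}^{n+1}|_{\partial\Omega}=0$ inherited from $\phi$, gives
\[
\int_\Omega\breve{\phi}^{n+1}\breve{\mu}^{n+1}\,dx
=\frac{\lambda}{2}\|\nabla\breve{\phi}^{n+1}\|^2+\frac{\lambda}{\eta^2}\|\phi^n\breve{\phi}^{n+1}\|^2\ge 0,
\]
so the second term is nonnegative.

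For the third term, I would test ACSAV Subproblem 4 with $\breve{u}^{n+1}$. The crucial point is that the convection contributions $\bigl((u^n\cdot\nabla)\tfrac{\breve{u}^{n+1}}{2},\breve{u}^{n+1}\bigr)+\tfrac12\bigl((\nabla\cdot u^n)\tfrac{\breve{u}^{n+1}}{2},\breve{u}^{n+1}\bigr)$ cancel by the skew-symmetry identity (valid since $u^n|_{\partial\Omega}=0$), leaving
\[
\int_\Omega(\mu^n\nabla\phi^n)\cdot\breve{u}^{n+1}\,dx
=\frac{1}{2\Delta t}\|\breve{u}^{n+1}\|^2+\frac{\beta}{\Delta t}\|\nabla\cdot\breve{u}^{n+1}\|^2+\frac{\nu}{2}\|\nabla\breve{u}^{n+1}\|^2\ge 0.
\]
Because the third term in \eqref{An+1} is exactly one half of this integral, it too is nonnegative. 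Combining the three bounds gives $A^{n+1}>0$, whence \eqref{V^n+1} has the unique solution $V^{n+1}=-B^{n+1}/A^{n+1}$.

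The main obstacle I anticipate is the sign analysis of the two integral terms: as written neither is obviously signed, and the trick is to reorganize each one, via the corresponding subproblem, into a sum of squared norms. The delicate bookkeeping lies in confirming that the convection terms are genuinely skew-symmetric so that they drop out when testing with $\breve{u}^{n+1}$, and that every boundary contribution vanishes under the stated boundary conditions $u|_{\partial\Omega}=0$ and $\partial_n\phi|_{\partial\Omega}=0$; once these are in place, the nonnegativity follows directly.
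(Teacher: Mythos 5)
Your proposal is correct and follows essentially the same route as the paper's own proof: both establish $A^{n+1}>0$ by feeding the definitions back through ACSAV Subproblems 2 and 4 together with \eqref{mubre}, testing with $\breve{\phi}^{n+1}$ and (up to a factor of two) $\breve{u}^{n+1}$, so that each integral term in \eqref{An+1} becomes a nonnegative sum of squared norms. The only differences are cosmetic — your test function $\breve{u}^{n+1}$ versus the paper's $\tfrac{\breve{u}^{n+1}}{2}$, and your explicit verification of the skew-symmetry cancellation of the convection terms, which the paper uses implicitly.
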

\begin{proof}
Taking the inner product of \eqref{mubre} with $\breve{\mu}^{n+1}$, we get 
\begin{equation}\label{tmp1}
-\int_{\Omega}( u^{n} \cdot \nabla \phi^{n}) \breve{\mu}^{n+1}  \, dx 
= M\|\breve{\mu}^{n+1}\|^2 + \frac{1}{2\Delta t}(\breve{\phi}^{n+1}, \breve{u}^{n+1}).
\end{equation}
Plugging \eqref{mubre} into ACSAV Subproblem 2, and taking its inner product with $\breve{\phi}^{n+1}$, we obtain
\begin{equation}\label{tmp2}
(\breve{\phi}^{n+1},\breve{\mu}^{n+1}) = \frac{\lambda}{2}\|\nabla\breve{\phi}^{n+1}\|^2 + \frac{\lambda}{\eta^2}\|\phi^n\breve{\phi}^{n+1}\|^2.
\end{equation}
Adding \eqref{tmp1} and \eqref{tmp2}, we have
\begin{equation}\label{tmp3}
-\int_{\Omega}( u^{n} \cdot \nabla \phi^{n}) \breve{\mu}^{n+1}  \, dx 
= M\|\breve{\mu}^{n+1}\|^2 + \frac{\lambda}{4\Delta t}\|\nabla\breve{\phi}^{n+1}\|^2 + \frac{\lambda}{2\eta^2\Delta t}\|\phi^n\breve{\phi}^{n+1}\|^2 \geq 0.
\end{equation}
Taking the inner product of ACSAV Subproblem 4 with $\frac{\breve{u}^{n+1}}{2}$, we get 
\begin{equation}\label{tmp4}
\int_{\Omega} (\frac{\breve{u}^{n+1}}{2} \cdot \nabla \phi^{n}) \cdot\mu^n \, dx = \frac{1}{4\Delta t}\|\breve{u}^{n+1}\|^2 + \frac{\beta}{2\Delta t}\|\nabla\cdot\breve{u}^{n+1}\|^2 + \frac{\nu}{4}\|\nabla\breve{u}^{n+1}\|^2 \geq 0.
\end{equation}
From \eqref{tmp3}, \eqref{tmp4} and the expression of $A^{n+1}$ in \eqref{An+1}, we conclude that
$$A^{n+1} > 0.$$
Therefore, the scalar equation $A^{n+1}V^{n+1} + B^{n+1} = 0$ admits a unique solution.
\end{proof}

To summarize, Algorithm \ref{Algo-ACNS-ACSAV} can be implemented in the following way:
\begin{itemize}
	\item\emph{Step 1}: Solve $\hat{\phi}^{n+1}$ and $\breve{\phi}^{n+1}$ from ACSAV Subproblem 1 and 2 separately;
	\item\emph{Step 2}: Compute $\hat{\mu}^{n+1}$ and $\breve{\mu}^{n+1}$ by \eqref{muhat} and \eqref{mubre};
	\item\emph{Step 3}: Solve $\hat{u}^{n+1}$ and $\breve{u}^{n+1}$ from ACSAV Subproblem 3 and 4 separately;
	\item \emph{Step 4}: Calculate $V^{n+1}$ from \eqref{V^n+1};
	\item \emph{Step 5}: Calculate $\phi^{n+1}$ and $u^{n+1}$ from \eqref{Vn+1} ;
	\item \emph{Step 6}: Calculate $p^{n+1}$ from \eqref{ACNS-ACSAV4}.
\end{itemize}


\section{ACSAV method with explicit convection term for the ACNS model}\label{sec:nsacsav}

The nonlinear convection term in the NS equations can be made explicit in time discretization using the scalar auxiliary variable presented in Section \ref{sec:acsav}. We now present another version of the ACSAV method  that treats the convection term fully explicitly. This method is also unconditionally long time stable, second-order accurate, linear, and fully decoupled.

With $u|_{\partial \Omega}=0$ and $\nabla\cdot u=0$ we have
$$\int_\Omega (u\cdot\nabla)u\cdot u\,dx = \frac{1}{2}\int_{\partial\Omega}u\cdot \vec{n}|u|^2ds - \frac{1}{2}\int_{\Omega}(\nabla\cdot u)|u|^2\,dx = 0.$$
So the ACNS system is equivalent to
\begin{numcases}{}
\text{Equation} \eqref{eq:ACSAV-AC1}-\eqref{eq:ACSAV-AC3} \text{, Equation} \eqref{eq:ACSAV-NS2}, \nonumber\\
\partial_tu+ \frac{r(t)}{exp(-\frac{t}{T})}(u\cdot \nabla) u  -\nu \Delta u + \nabla p = \frac{r(t)}{exp(-\frac{t}{T})}\mu\nabla\phi,\label{eq:NSACSAV-NS1}\\
\frac{dr}{dt}=-\frac{1}{T} r + \frac{1}{exp(-\frac{t}{T})} \int_{\Omega} \Big((u \cdot \nabla\phi)\mu-(u \cdot \nabla\phi)\mu + (u\cdot\nabla)u\cdot u\Big)\, dx .\label{eq:NSACSAV-drdt}
\end{numcases}
The ACSAV method with explicit convection term then writes
\begin{Aalgorithm}\label{Algo-NS-ACSAV}
Given $u^{n-1}$, $u^n$, $p^{n-1}$, $p^n$, $\phi^{n-2}$, $\phi^{n-1}$, $\phi^n$, $q^{n-1}$, $q^n$, $r^{n-1}$, $r^n$, $\alpha >0$, $\beta > 0$, find $u^{n+1}$, $p^{n+1}$, $\phi^{n+1}$, $q^{n+1}$, and $r^{n+1}$ satisfying equation \eqref{ACNS-ACSAV1}-\eqref{ACNS-ACSAV2}, equation \eqref{mueq}, equation \eqref{ACNS-ACSAV4}, and
\begin{align}
&\frac{u^{n+1}-u^{n-1}}{2\Delta t}+ \frac{r^{n+1}+r^{n-1}}{2exp(-\frac{t^n}{T})}(u^{n}\cdot\nabla) u^n -\beta \Delta t^{-1}\nabla \left( \nabla \cdot u^{n+1} - \nabla \cdot u^{n-1}\right)\notag\\
&\qquad\qquad-\nu\Delta \left(\frac{u^{n+1}+u^{n-1}}{2}\right)+\nabla p^n =\frac{r^{n+1}+r^{n-1}}{2exp(-\frac{t^n}{T})} \mu^n\nabla\phi^n ,\label{nACNS3}\\
&\frac{r^{n+1}-r^{n-1}}{2\Delta t}
= -\frac{1}{T} \frac{r^{n+1}+r^{n-1}}{2}+ \frac{1}{exp(-\frac{t^{n}}{T})} \int_{\Omega}( u^{n} \cdot \nabla \phi^{n}) \bar{\mu}^n  \, dx ,\notag\\
&\qquad\qquad - \frac{1}{exp(-\frac{t^{n}}{T})} \int_{\Omega} (\frac{u^{n+1}+u^{n-1}}{2} \cdot \nabla \phi^{n}) \cdot\mu^n \, dx \notag\\
&\qquad\qquad + \frac{1}{exp(-\frac{t^{n}}{T})} \int_{\Omega} (u^{n} \cdot \nabla) u^{n} \cdot \frac{u^{n+1}+u^{n-1}}{2}  \, dx. \label{ndrdt1}
\end{align}
\end{Aalgorithm}

\begin{theorem}
[Stability of Algorithm \eqref{Algo-NS-ACSAV}] Taking $\alpha$ and $\beta$ such that $\alpha\beta\geq \frac{1}{4}$, then for any $N\geq 3$ the inequality \eqref{acsavineq} holds.
\end{theorem}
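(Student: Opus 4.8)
The plan is to mirror the energy argument used to prove Theorem~\ref{acsavthm}, since Algorithm~\ref{Algo-NS-ACSAV} shares the phase-field block \eqref{ACNS-ACSAV1}--\eqref{ACNS-ACSAV2}, the chemical potential \eqref{mueq}, and the artificial-compression relation \eqref{ACNS-ACSAV4} verbatim with Algorithm~\ref{Algo-ACNS-ACSAV}; only the momentum equation \eqref{nACNS3} and the auxiliary-variable update \eqref{ndrdt1} are modified. First I would take exactly the same test functions as before: the inner product of \eqref{ACNS-ACSAV1} with $\bar{\mu}^n$, of \eqref{mubareq} with $-\frac{\phi^{n+1}-\phi^{n-1}}{2\Delta t}$, and of \eqref{ACNS-ACSAV2} with $\frac{\lambda\eta^2}{2}\frac{q^{n+1}+q^{n-1}}{2}$, which reproduces \eqref{npf1} unchanged, together with the inner product of \eqref{ACNS-ACSAV4} with $\frac{p^{n+1}+p^{n-1}}{2}$, which reproduces \eqref{npf3}.

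Only the momentum and the $r$-update identities change. Testing \eqref{nACNS3} with $\frac{u^{n+1}+u^{n-1}}{2}$ gives the analogue of \eqref{npf2}, except that the semi-implicit convection and its skew-symmetric correction are absent and are replaced by the single explicit term $\frac{r^{n+1}+r^{n-1}}{2\exp(-t^n/T)}\int_\Omega (u^n\cdot\nabla)u^n\cdot\frac{u^{n+1}+u^{n-1}}{2}\,dx$ on the left. Testing \eqref{ndrdt1} with $\frac{r^{n+1}+r^{n-1}}{2}$ gives the analogue of \eqref{npf4} with exactly one extra contribution, $\frac{r^{n+1}+r^{n-1}}{2\exp(-t^n/T)}\int_\Omega (u^n\cdot\nabla)u^n\cdot\frac{u^{n+1}+u^{n-1}}{2}\,dx$, appearing with opposite effective sign. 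The crucial observation is that when the four identities are combined these two contributions cancel identically, in precisely the same manner as the phase-coupling terms $\frac{r^{n+1}+r^{n-1}}{2\exp(-t^n/T)}(u^n\cdot\nabla\phi^n,\bar{\mu}^n)$ and $\frac{r^{n+1}+r^{n-1}}{2\exp(-t^n/T)}(\mu^n\nabla\phi^n,\frac{u^{n+1}+u^{n-1}}{2})$ already cancel among \eqref{npf1}, the momentum identity, and \eqref{npf4}. Consequently the sum collapses to exactly the energy identity \eqref{npf5}.

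Once \eqref{npf5} is recovered, the remainder is word-for-word identical to the proof of Theorem~\ref{acsavthm}: rewrite the two pressure--divergence terms in telescoping form, sum from $n=2$ to $N-1$, multiply by $\Delta t$ to obtain the analogue of \eqref{npf7}, bound the boundary terms $\frac{1}{2}\Delta t(p^{N},\nabla\cdot u^{N-1})-\frac{1}{2}\Delta t(p^{N-1},\nabla\cdot u^{N})$ by Young's inequality, and invoke $\alpha\geq\frac{1}{4\beta}$ (equivalently $\alpha\beta\geq\frac14$) to absorb the $\Delta t^2\|p^{N}\|^2$ and $\beta\|\nabla\cdot u^{N}\|^2$ pieces, yielding \eqref{acsavineq}.

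The only genuine obstacle, and the step I would verify most carefully, is the convection cancellation. In Algorithm~\ref{Algo-ACNS-ACSAV} the convection vanishes against $\frac{u^{n+1}+u^{n-1}}{2}$ through the skew-symmetrizing term $\frac12(\nabla\cdot u^n)(\frac{u^{n+1}+u^{n-1}}{2})$; here that term is deliberately dropped, so the explicit convection does \emph{not} vanish on its own. Instead its energy footprint must be annihilated through the scalar auxiliary variable, which is exactly the role of the added term $\frac{1}{\exp(-t^n/T)}\int_\Omega (u^n\cdot\nabla)u^n\cdot\frac{u^{n+1}+u^{n-1}}{2}\,dx$ in \eqref{ndrdt1}. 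I would therefore confirm that the sign and the common scaling factor $\frac{r^{n+1}+r^{n-1}}{2\exp(-t^n/T)}$ match on both occurrences, a matching that is guaranteed by the reformulation \eqref{eq:NSACSAV-drdt} of the continuous model using $\int_\Omega(u\cdot\nabla)u\cdot u\,dx=0$. Since no term is estimated by a time-step-dependent inequality, the resulting bound holds unconditionally.
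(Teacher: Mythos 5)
Your proposal is correct and follows essentially the same route as the paper, whose proof consists of the single remark that one repeats the steps of Theorem~\ref{acsavthm} verbatim; you have merely made explicit the one nontrivial ingredient that remark hides, namely that the explicit convection term tested against $\frac{u^{n+1}+u^{n-1}}{2}$ cancels identically against the added term in \eqref{ndrdt1} once it is tested with $\frac{r^{n+1}+r^{n-1}}{2}$, after which \eqref{npf5} and the rest of the argument are recovered unchanged.
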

\begin{proof}
Following the steps in the proof of Theorem \ref{acsavthm} exactly, we obtain \eqref{acsavineq}.
\end{proof}

\subsection*{Implementation of the ACSAV method with explicit convection term}
The implementation of the ACSAV method with explicit convection term (ACSAV-ECT) is very similar to the original ACSAV algorithm.
We will solve ACSAV Subproblem 1 and 2 for $\hat{\phi}^{n+1}$, $\breve{\phi}^{n+1}$ respectively, and then solve the following two subproblems for $\hat{u}^{n+1}$, $\breve{u}^{n+1}$ respectively.
\begin{align}
&\frac{\hat{u}^{n+1}-u^{n-1}}{2\Delta t} -\beta \Delta t^{-1}\nabla \left( \nabla \cdot \hat{u}^{n+1} - \nabla \cdot u^{n-1}\right)\notag\\
&\qquad\qquad-\nu\Delta \left(\frac{\hat{u}^{n+1}+u^{n-1}}{2}\right)+\nabla p^n = 0,\tag{ACSAV-ECT Subproblem 3}\\
&\frac{\breve{u}^{n+1}}{2\Delta t}+ (u^{n}\cdot\nabla) u^n -\beta \Delta t^{-1}\nabla \left( \nabla \cdot \breve{u}^{n+1}\right)-\nu\Delta \left(\frac{\breve{u}^{n+1}}{2}\right) = \mu^n\nabla\phi^n . \tag{ACSAV-ECT Subproblem 4}
\end{align}

Plugging the expression \eqref{req} of $r^{n+1}$ into \eqref{ndrdt1}$\cdot \frac{r^{n+1}+r^{n-1}}{2}$ gives
\begin{align}
&(\frac{1}{\Delta t}+\frac{1}{T})exp(-\frac{2t^n}{T})(V^{n+1})^2-\frac{1}{\Delta t}exp(-\frac{t^n}{T})r^{n-1}V^{n+1}\\
& = V^{n+1} \int_{\Omega}( u^{n} \cdot \nabla \phi^{n}) \hat{\mu}^{n+1}  \, dx + (V^{n+1})^2 \int_{\Omega}( u^{n} \cdot \nabla \phi^{n}) \breve{\mu}^{n+1}  \, dx,\notag\\
&\quad - V^{n+1} \int_{\Omega} (\frac{\hat{u}^{n+1}+u^{n-1}}{2} \cdot \nabla \phi^{n}) \cdot\mu^n \, dx - (V^{n+1})^2 \int_{\Omega} (\frac{\breve{u}^{n+1}}{2} \cdot \nabla \phi^{n}) \cdot\mu^n \, dx \notag\\
&\quad + V^{n+1}\int_{\Omega} (u^{n} \cdot \nabla) u^{n} \cdot \frac{\hat{u}^{n+1}+u^{n-1}}{2}  \, dx + (V^{n+1})^2\int_{\Omega} (u^{n} \cdot \nabla) u^{n} \cdot \frac{\breve{u}^{n+1}}{2}  \, dx. \notag
\end{align}
Then we can compute $V^{n+1}$ by
\begin{align}\label{sub5}
A^{n+1}V^{n+1} + B^{n+1} = 0,
\end{align}
where
\begin{align}
& A^{n+1} = 
(\frac{1}{\Delta t}+\frac{1}{T})exp(-\frac{2t^{n}}{T}) 
 -\int_{\Omega}( u^{n} \cdot \nabla \phi^{n}) \breve{\mu}^{n+1}  \, dx
    +\int_{\Omega} (\frac{\breve{u}^{n+1}}{2} \cdot \nabla \phi^{n}) \cdot\mu^n \, dx\notag\\
&\qquad    -\int_{\Omega} (u^{n} \cdot \nabla) u^{n} \cdot \frac{\breve{u}^{n+1}}{2}  \, dx,
\label{nAn+1}\\
& B^{n+1} =  -\frac{r^{n-1}}{\Delta t} exp(-\frac{t^{n}}{T}) -
\int_{\Omega}( u^{n} \cdot \nabla \phi^{n}) \hat{\mu}^{n+1}  \, dx
+ \int_{\Omega} (\frac{\hat{u}^{n+1}+u^{n-1}}{2} \cdot \nabla \phi^{n}) \cdot\mu^n \, dx \notag\\
&\qquad - \int_{\Omega} (u^{n} \cdot \nabla) u^{n} \cdot \frac{\hat{u}^{n+1}+u^{n-1}}{2}  \, dx .\
\end{align}

\begin{theorem}
The scalar equation \eqref{sub5} admits a unique solution.
\end{theorem}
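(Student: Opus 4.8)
The plan is to mirror the proof of the preceding uniqueness theorem: equation \eqref{sub5} is the scalar linear relation $A^{n+1}V^{n+1}+B^{n+1}=0$, so it suffices to establish $A^{n+1}>0$, whence $V^{n+1}=-B^{n+1}/A^{n+1}$ is the unique solution. Thus the whole task reduces to controlling the sign of the integral terms appearing in the expression \eqref{nAn+1} for $A^{n+1}$.

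First I would note that the phase-field part of the ACSAV-ECT scheme --- namely ACSAV Subproblem 1 and 2 together with the definition \eqref{mubre} of $\breve{\mu}^{n+1}$ --- is identical to that of Algorithm \ref{Algo-ACNS-ACSAV}. Consequently the identity \eqref{tmp3} carries over without change, giving
\[ -\int_{\Omega}( u^{n} \cdot \nabla \phi^{n}) \breve{\mu}^{n+1} \, dx = M\|\breve{\mu}^{n+1}\|^2 + \frac{\lambda}{4\Delta t}\|\nabla\breve{\phi}^{n+1}\|^2 + \frac{\lambda}{2\eta^2\Delta t}\|\phi^n\breve{\phi}^{n+1}\|^2 \geq 0 , \]
which disposes of the second integral term of $A^{n+1}$ in \eqref{nAn+1}.

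The only genuinely new step concerns the two velocity integrals, which together form $\int_{\Omega}(\tfrac{\breve{u}^{n+1}}{2}\cdot\nabla\phi^n)\mu^n\,dx-\int_{\Omega}(u^n\cdot\nabla)u^n\cdot\tfrac{\breve{u}^{n+1}}{2}\,dx$ in \eqref{nAn+1}. I would test ACSAV-ECT Subproblem 4 with $\tfrac{\breve{u}^{n+1}}{2}$. Integrating the artificial-compression term $-\beta\Delta t^{-1}\nabla(\nabla\cdot\breve{u}^{n+1})$ and the viscous term $-\nu\Delta(\breve{u}^{n+1}/2)$ by parts, the boundary contributions vanishing because $\breve{u}^{n+1}|_{\partial\Omega}=0$, the left-hand side becomes
\[ \frac{1}{4\Delta t}\|\breve{u}^{n+1}\|^2+\frac{\beta}{2\Delta t}\|\nabla\cdot\breve{u}^{n+1}\|^2+\frac{\nu}{4}\|\nabla\breve{u}^{n+1}\|^2+\int_{\Omega}(u^n\cdot\nabla)u^n\cdot\frac{\breve{u}^{n+1}}{2}\,dx , \]
while the right-hand side equals $\int_{\Omega}(\tfrac{\breve{u}^{n+1}}{2}\cdot\nabla\phi^n)\mu^n\,dx$. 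Rearranging, the explicit convection integral moves to the right and precisely cancels the $-\int_{\Omega}(u^n\cdot\nabla)u^n\cdot\tfrac{\breve{u}^{n+1}}{2}\,dx$ term of $A^{n+1}$, so that the two velocity integrals reduce to the nonnegative quantity $\frac{1}{4\Delta t}\|\breve{u}^{n+1}\|^2+\frac{\beta}{2\Delta t}\|\nabla\cdot\breve{u}^{n+1}\|^2+\frac{\nu}{4}\|\nabla\breve{u}^{n+1}\|^2\geq 0$. Substituting this and the previous bound into \eqref{nAn+1} yields $A^{n+1}=(\tfrac{1}{\Delta t}+\tfrac{1}{T})exp(-\tfrac{2t^n}{T})+(\text{nonnegative terms})>0$, since the leading coefficient is strictly positive; uniqueness follows at once.

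The main point to get right --- and really the only obstacle --- is the bookkeeping in this last step: one must test Subproblem 4 with the half-factor $\breve{u}^{n+1}/2$ rather than $\breve{u}^{n+1}$ so that every coefficient matches \eqref{nAn+1}, and one must observe that, unlike the semi-implicit case of \eqref{tmp4} where the skew-symmetric convection term self-annihilates upon testing, here the explicitly treated term $(u^n\cdot\nabla)u^n$ survives and is instead offset by the extra convection integral that the ACSAV-ECT scheme deliberately inserts into $A^{n+1}$ via \eqref{ndrdt1}. Everything else is the same routine verification as in the proof of the preceding theorem.
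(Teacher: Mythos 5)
Your proposal is correct and follows exactly the paper's own argument: reuse \eqref{tmp3} for the phase-field integral, then test ACSAV-ECT Subproblem 4 with $\frac{\breve{u}^{n+1}}{2}$ so that the explicit convection integral and the phase-induced stress integral combine into the nonnegative quantity \eqref{tmp40}, giving $A^{n+1}>0$ and hence a unique $V^{n+1}$. Your closing observation—that the convection term no longer self-annihilates as in \eqref{tmp4} but is instead offset by the extra term inserted into $A^{n+1}$ via \eqref{ndrdt1}—is precisely the point of the paper's modified estimate.
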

\begin{proof}
We have \eqref{tmp3} already.
Then taking the inner product of ACSAV-ECT Subproblem 4 with $\frac{\breve{u}^{n+1}}{2}$, we get 
\begin{equation}\label{tmp40}
\begin{split}
&\int_{\Omega} (\frac{\breve{u}^{n+1}}{2} \cdot \nabla \phi^{n}) \cdot\mu^n \, dx -\int_{\Omega} (u^{n} \cdot \nabla) u^{n} \cdot \frac{\breve{u}^{n+1}}{2}  \, dx \\
&= \frac{1}{4\Delta t}\|\breve{u}^{n+1}\|^2 + \frac{\beta}{2\Delta t}\|\nabla\cdot\breve{u}^{n+1}\|^2 + \frac{\nu}{4}\|\nabla\breve{u}^{n+1}\|^2 \\
&\geq 0.
\end{split}
\end{equation}
From \eqref{tmp3}, \eqref{tmp40} and the expression of $A^{n+1}$ in \eqref{nAn+1}, we conclude that
$$A^{n+1} > 0.$$
Therefore the scalar equation \eqref{sub5} admits a unique solution.
\end{proof}

\section{Numerical Experiments}\label{sec:result}
In this section, we first verify the convergence order of the proposed CNLFAC scheme (Algorithm \ref{Algo-ACNS}), ACSAV scheme (Algorithm \ref{Algo-ACNS-ACSAV}), and the ACSAV-ECT scheme (Algorithm \ref{Algo-NS-ACSAV}) with  numerical examples. Then several numerical tests of the phase field fluid model are performed to verify the stability, effectiveness, and efficiency of these schemes.

Since the simulations by the ACSAV algorithm and the ACSAV-ECT algorithm are extremely close, in sec.~\ref{62}--\ref{64} we only report the results for two algorithms: CNLFAC and ACSAV. Computatinoal efficiency will be reported in sec.~\ref{65} for all three algorithms.

\subsection{Convergence test}
The studied phase field model does not have a natural forcing term, which makes it difficult to construct a solution to be used in a convergence test. Herein we verify the convergence rate of the algorithms by computing the Cauchy difference, as is done in \cite{Han17}. 
Specifically, we run the simulations on four successively refined meshes with mesh size $h=\frac{\sqrt{2}}{5\times 2^{l-1}}, l=1,..,4$. We also take a constant ratio of the time step and mesh size so that $\Delta t = \frac{0.05}{\sqrt{2}}h$. Then we compute the Cauchy difference of the solutions at two successive levels, namely $v^{n+1}_{l+1}-v^{n+1}_{l},$ at time $t^{n+1}=(n+1)\Delta t=0.1$, for $v= \phi, u_1, u_2, p$, $l=1,2,3,4$. 

The initial condition for the phase variable $\phi$ is set as
\begin{align*}
\phi^0=0.24 \cos(2\pi x)\cos(2\pi y) + 0.4 \cos(\pi x)\cos(3\pi y).
\end{align*}
For velocity and pressure, we solve the following Stokes problem with the Taylor-Hood element pair $P^2-P^1$ and use the solution as the initial condition $(u^0, p^0)$: 
\begin{equation*}\label{eq:Stokes}
\left\{\begin{aligned}
& -\nu \Delta u+\nabla p=\mu^0 \nabla \phi^0,\\
&\nabla \cdot u = 0,
\end{aligned}\right.
\end{equation*}
\noindent where $\mu^0=\lambda (-\Delta \phi^0  + f(\phi^0))$. Note that the initial condition $(u^0, p^0)$ obtained from solving this problem satisfies the discrete inf-sup condition, which guarantees stability of the simulation in the first step.

The proposed CNLFAC algorithm is 3-level, the ACSAV algorithm and ACSAV-ECT algorithm are 4-level, so a 2-level method is needed to initialize the first two or three steps in time. We use the following first order, implicit algorithm to carry out this computation \cite{Feng06}.

\begin{Aalgorithm}[2-level, first order scheme for ACNS]\label{Algoo0}
Given $u^n$ and $\phi^n$, find $u^{n+1}$, $p^{n+1}$, and $\phi^{n+1}$ satisfying
\begin{align}
&\dot{\phi}^{n+1}-\lambda M \left( \Delta \phi^{n+1}-f_0(\phi^{n+1}, \phi^{n})\right)=0,\label{phio1}\\
&\frac{u^{n+1}-u^{n}}{\Delta t}+ u^{n}\cdot\nabla u^{n+1}-\nu \Delta u^{n+1}+\nabla p^{n+1} +\frac{1}{M}\dot{\phi}^{n+1}\cdot \nabla \phi^{n}=0,\label{uo1}\\
&\nabla \cdot u^{n+1}=0,\label{po1}
\end{align}
where 
\begin{align*}
&\dot{\phi}^{n+1}=\frac{\phi^{n+1}-\phi^{n}}{\Delta t}+ u^{n+1}\cdot \nabla \phi^{n}, \\
&f_0(\phi^{n+1}, \phi^{n})=\frac{1}{\eta^2}\frac{(\phi^{n+1})^2+(\phi^n)^2}{2}\frac{\phi^{n+1}+\phi^n}{2}-\frac{1}{\eta^2}\phi^{n}.
\end{align*}
\end{Aalgorithm}
Algorithm \ref{Algoo0} contains implicit nonlinear term $f_0(\phi^{n+1}, \phi^{n})$, which only appears in the first equation. So we use a Picard iteration algorithm to decouple the computation of $\phi$ and $u$ and apply Newton's method to the first nonlinear equation for the update of $\phi$. This procedure is listed in Algorithm \ref{picard}.
\begin{algorithm}[tp]
\caption{Picard iteration for Algorithm \ref{Algoo0}}\label{picard}
\begin{algorithmic}[1]
\Procedure{(Given $u^n, \phi^n$, Find $u^{n+1}$, $p^{n+1}$ and $\phi^{n+1}$)}{}
\State $u^{n+1} \gets 0$
\State $u^{n+1}_{temp} \gets u^n$
\While{$\Vert u^{n+1}-u^{n+1}_{temp}\Vert \geq $ tolerance}
\State Solve \eqref{phio1} with $u^{n+1} = u^{n+1}_{temp} $ using Newton's method and get $\phi^{n+1}$
\State Then solve \eqref{uo1}-\eqref{po1} with $\phi^{n+1}$ and get $u^{n+1}$
\State $u^{n+1}_{temp} \gets u^{n+1}$
\EndWhile
\EndProcedure
\end{algorithmic}
\end{algorithm}

\begin{table}[h!]
\centering
{\small
\caption{\noindent Cauchy differences  of numerical solutions computed by the CNLFAC scheme with inputs $\eta=0.1, M=10, \lambda=0.0001, \nu=0.8$.}
\label{tab:CNLFAC}
\begin{tabular}{|c||c|c||c|c||c|c|}
\hline
$l$& $\Vert \nabla \phi^{n+1}_{l+1}-\nabla \phi^{n+1}_{l}\Vert$ & rate&  $\Vert \nabla u^{n+1}_{l+1}-\nabla u^{n+1}_{l}\Vert$ & rate& $\Vert p^{n+1}_{l+1}-p^{n+1}_{l}\Vert$ & rate \\
\hline
1  & 3.7409e-01  &   --- &  2.5467e-05  &   --- & 5.4504e-05 &   ---  \\
2  &  9.7100e-02  &  1.95  &  6.7258e-06  &   1.92  & 1.2777e-05 &   2.09    \\
3  &  2.4751e-02  &   1.97  &  1.4333e-06  &   2.23   & 1.7790e-06 &   2.84   \\
4  &  6.2438e-03  &   1.99  &  2.1241e-07  &   2.75  & 3.6667e-07 &   2.28   \\
\hline
\end{tabular}
}
\end{table}

\begin{table}[h!]
\centering
{\small
\caption{\noindent Cauchy differences of numerical solutions computed by the ACSAV scheme with inputs $\eta=0.1, M=10, \lambda=0.0001, \nu=0.8$.}
\label{tab:ACSAV}
\begin{tabular}{|c||c|c||c|c||c|c|}
	\hline
	$l$&  $\Vert \nabla \phi^{n+1}_{l+1}-\nabla \phi^{n+1}_{l}\Vert$ & rate & $\Vert \nabla u^{n+1}_{l+1}-\nabla u^{n+1}_{l}\Vert$ & rate& $\Vert p^{n+1}_{l+1}-p^{n+1}_{l}\Vert$ & rate \\
	\hline
	1  &  3.7412e-01  &   --- &  2.9741e-05  &   --- & 2.4582e-05 &   ---   \\
	2  &  9.7102e-02  &  1.95  &  7.0300e-06  &   2.08  & 7.5678e-06 &   1.70   \\
	3  &  2.4751e-02  &   1.97  &  1.4496e-06  &   2.28  & 1.7154e-06 &   2.14   \\
	4  &  6.2438e-03  &   1.99  &  2.1416e-07  &   2.76  & 3.6739e-07 &   2.22   \\
	\hline
\end{tabular}
}
\end{table}

\begin{table}[h!]
\centering
{\small 
\caption{\noindent Cauchy differences  of numerical solutions computed by the ACSAV-ECT scheme with inputs $\eta=0.1, M=10, \lambda=0.0001, \nu=0.8$.}
\label{tab:expACSAV}
\begin{tabular}{|c||c|c||c|c||c|c|}
\hline
$l$& $\Vert \nabla \phi^{n+1}_{l+1}-\nabla \phi^{n+1}_{l}\Vert$ & rate&  $\Vert \nabla u^{n+1}_{l+1}-\nabla u^{n+1}_{l}\Vert$ & rate& $\Vert p^{n+1}_{l+1}-p^{n+1}_{l}\Vert$ & rate \\
\hline
1  & 3.7412e-01  &   --- &  2.9747e-05  &   --- & 2.4589e-05 &   ---  \\
2  &  9.7101e-02  &  1.95  &  7.0320e-06  &   2.08  & 7.5611e-06 &   1.70    \\
3  &  2.4751e-02  &   1.97  &  1.4501e-06  &   2.28   & 1.7144e-06 &   2.14   \\
4  &  6.2438e-03  &   1.99  &  2.1425e-07  &   2.76  & 3.6585e-07 &   2.23   \\
\hline
\end{tabular}
}
\end{table}

We now test the convergence rates of the CNLFAC, ACSAV, and ACSAV-ECT schemes. The parameters in this problem are $\eta=0.1, M=10, \lambda=0.0001, \nu=0.8$.
The stabilization parameters are set as  $$\alpha = 1, ~\beta = 0.25.$$ 
Here we use the P2 finite element space for variable $\phi$, and the P2-P1 finite element spaces for $u$ and $p$. 
From the consistency error, one would expect second order convergence rate for $\phi$ in $H^1$ semi-norm, $u$ in $H^1$ semi-norm, and $p$ in $L^2$ norm, since the numerical error is $O(h^2+\Delta t^2) = O(\Delta t^2)$.

The Cauchy differences of numerical solutions computed by the CNLFAC scheme are listed in Table \ref{tab:CNLFAC}, for the phase field $\phi$, the fluid velocity $u$, and the fluid pressure $p$, illustrating that the CNLFAC algorithm is second order in time convergent.
For the ACSAV scheme, the Cauchy differences are listed in Table \ref{tab:ACSAV}, also showing second order in time convergent. 
Very similar results for the ACSAV-ECT scheme are presented in Table \ref{tab:expACSAV}.
Note that for the pressure projection scheme, which requires artificial boundary conditions for the pressure, the rate of convergence for pressure $p$ is first order, whereas, the artificial compression method we employ here avoids the requirement of artificial boundary conditions for $p$.

\subsection{Stability on simulating spinodal decomposition}\label{62}
In this section, we perform several tests on simulating the spinodal decomposition of binary fluids by referring to \cite{HJ20} and \cite{xfYang20} and show that our schemes satisfy the energy law, which in turn illustrates the long-time stability of our schemes.

The initial condition for the velocity and pressure is zero in the computational domain $\Omega = [0,2\pi]^2$.
The phase field variable $\phi$ is initially treated as a random field
$$\phi^0=\bar{\phi}+r(x,y)$$
with a mean component $\bar{\phi}=0.0$ and random $r\in[-0.001,0.001]$. The parameter values are set as $\eta = 0.1$, $\lambda = 0.01$, $M = 100$, $\nu = 1.0$. 
Under the Allen--Cahn dynamics, the mixture undergoes a rapid phase separation, in which phases of the same composition rapidly aggregate, then develops to a slower coarsening process in which smaller droplets are graduately absorbed by larger ones.

In order to verify that the CNLFAC and ACSAV algorithms maintain energy stability without any time step condition, we plot the evolution chart of the total free energy \eqref{energy-q} computed with various time step sizes, as shown in Fig.~\ref{fig:energy}. 
In Fig.\ref{fig:CNLFACEnergy}, all the energy curves have a trend of monotonic decay, which confirms the unconditional stability of CNLFAC algorithm. Similarly, Fig.\ref{fig:ACSAVEnergy} shows the monotonic evolution of the energy for all time step sizes, so the unconditional stability of the ACSAV algorithm is illustrated. We set the time step to be $\Delta t=0.01$, we plots snapshots of the curves of $\phi$ at different times using both CNLFAC and ACSAV algorithms, respectively. In Fig.~\ref{fig:ACSAVphi} and Fig.~\ref{fig:CNLFACphi}, we observe that the final equilibrium solutions in both algorithm simulations are circular.

In addition, Fig.~\ref{fig:V_n+1} plots the evolutions of the auxiliary variable $V^{n+1}$ computed in the ACSAV algorithm with different time step sizes ranging from 0.01 to 0.00125. The magnitude of $V^{n+1}$ is always close to 1, which implies the ACSAV algorithm is stable and convergent.

\begin{figure}[tp]
	\begin{center}
		\subfigure[CNLFAC.]{\label{fig:CNLFACEnergy}
			\includegraphics[scale=0.55]{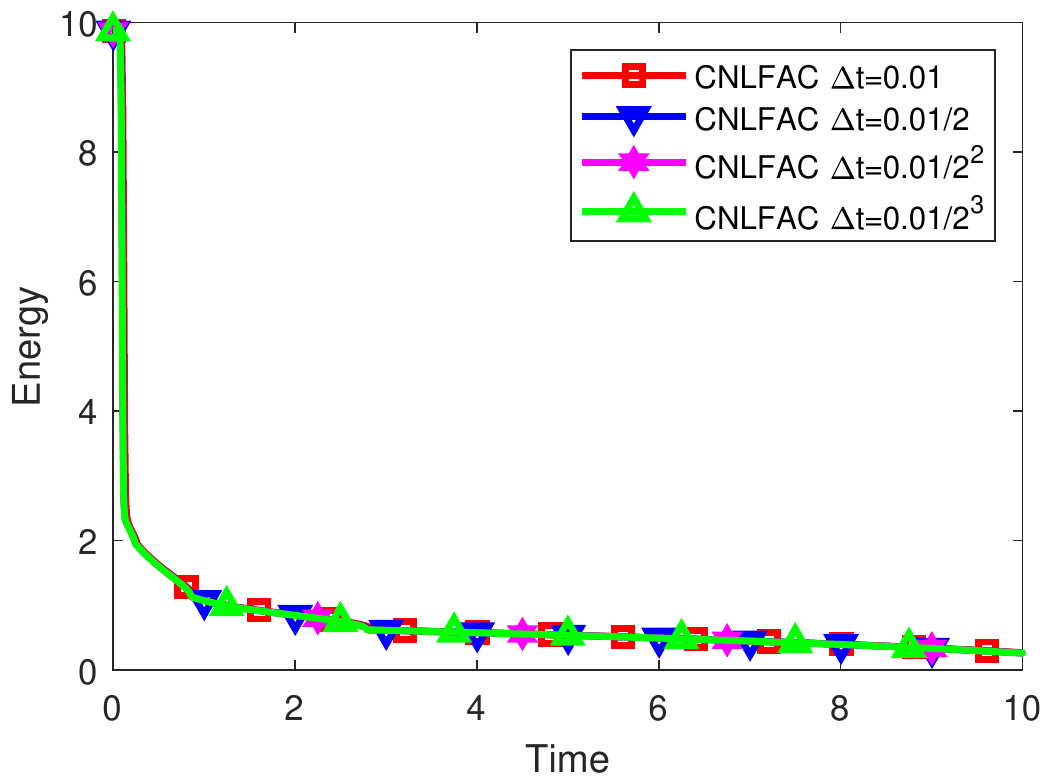}}
		\subfigure[AVSAV.]{\label{fig:ACSAVEnergy}
			\includegraphics[scale=0.55]{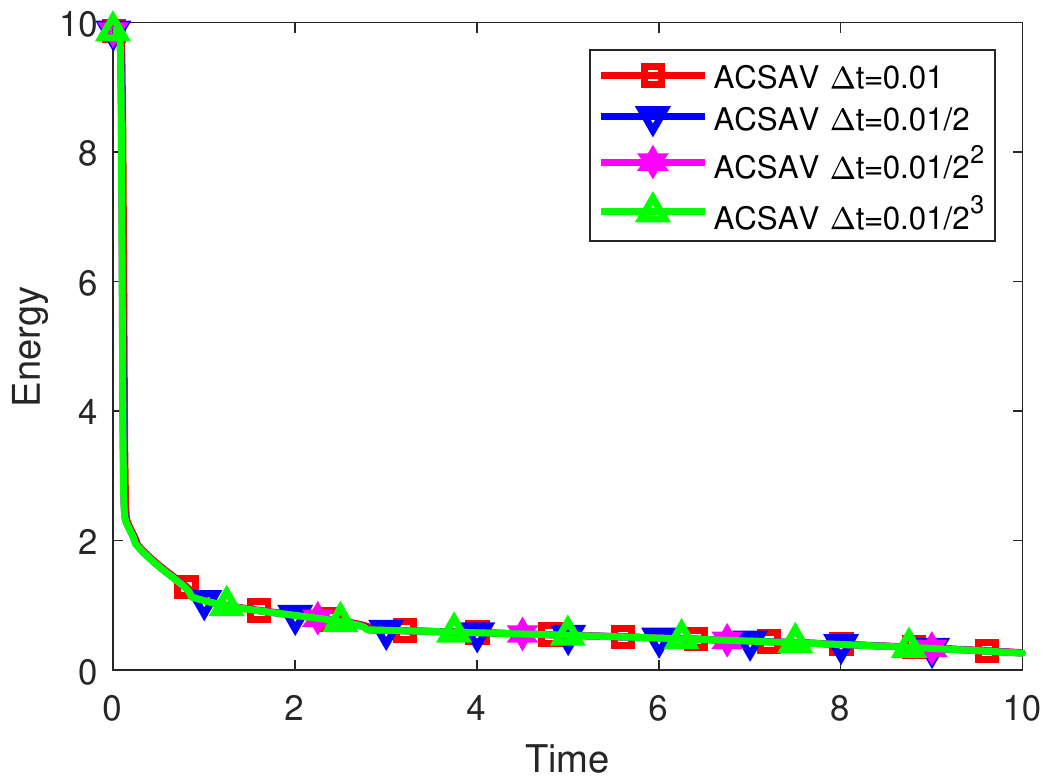}}
		\caption{Total free energy computed by the CNLFAC algorithm and the ACSAV algorithm. }
		\label{fig:energy}
	\end{center}
\end{figure}

\begin{figure}[tp]
	\begin{center}
		\subfigure[Snapshots are taken by CNLFAC algorithm at $t=0.15$, $t=0.3$, $t=0.75$, $t=1.0$, $t=2.0$ and $t=3.0$.]{\label{fig:CNLFACphi}
		\includegraphics[scale=0.25]{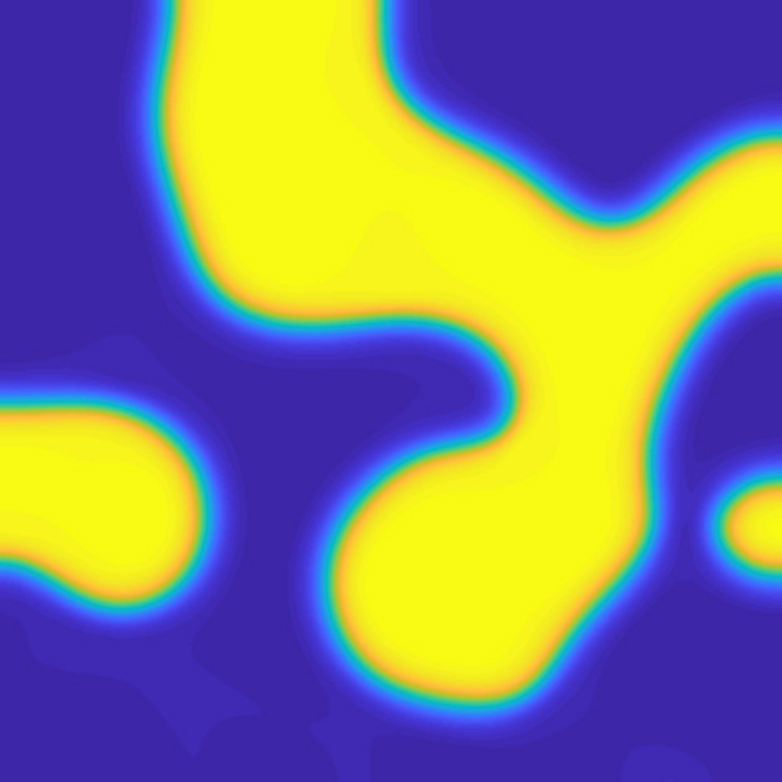} \includegraphics[scale=0.25]{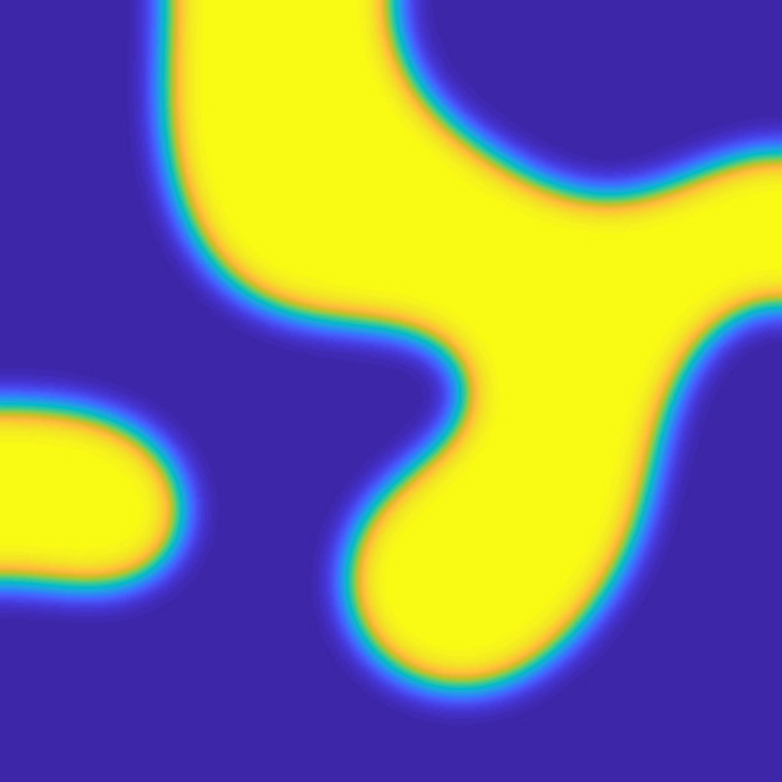}
		\includegraphics[scale=0.25]{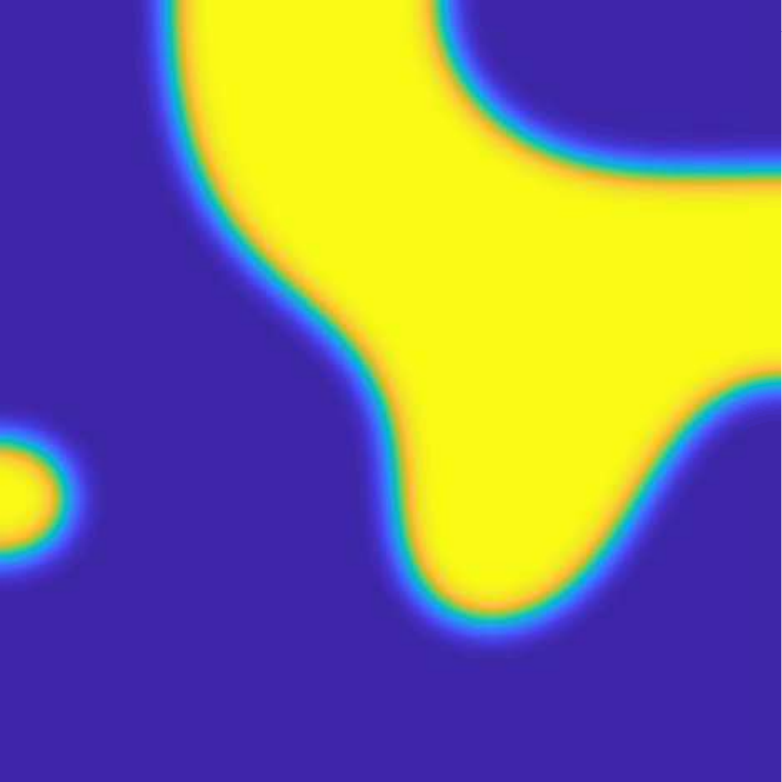}
		\includegraphics[scale=0.25]{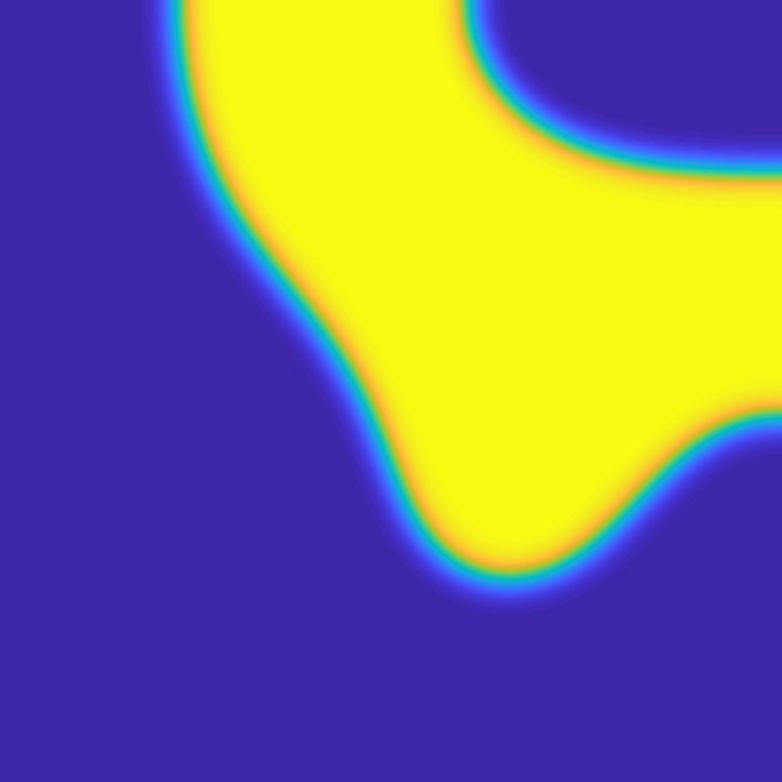}
		\includegraphics[scale=0.25]{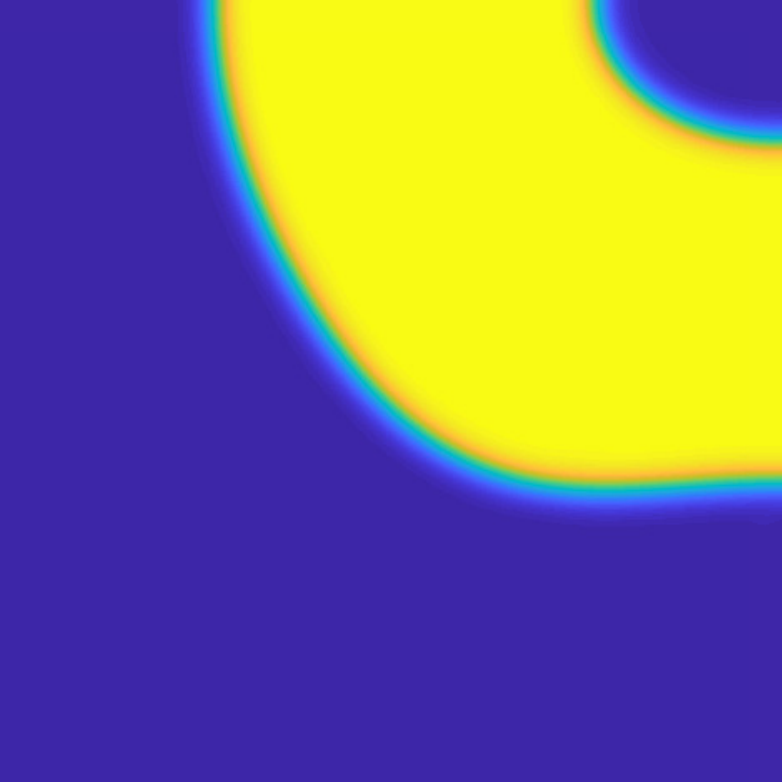}
		\includegraphics[scale=0.25]{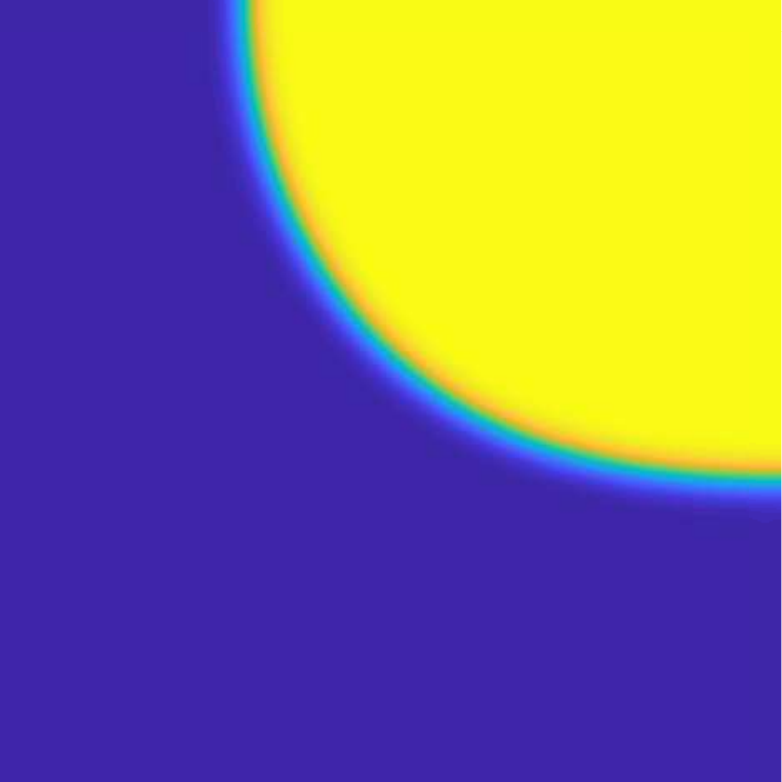}}
		\subfigure[Snapshots are taken by ACSAV algorithm at $t=0.15$, $t=0.3$, $t=0.75$, $t=1.0$, $t=2.0$ and $t=3.0$.]{\label{fig:ACSAVphi}
		\includegraphics[scale=0.25]{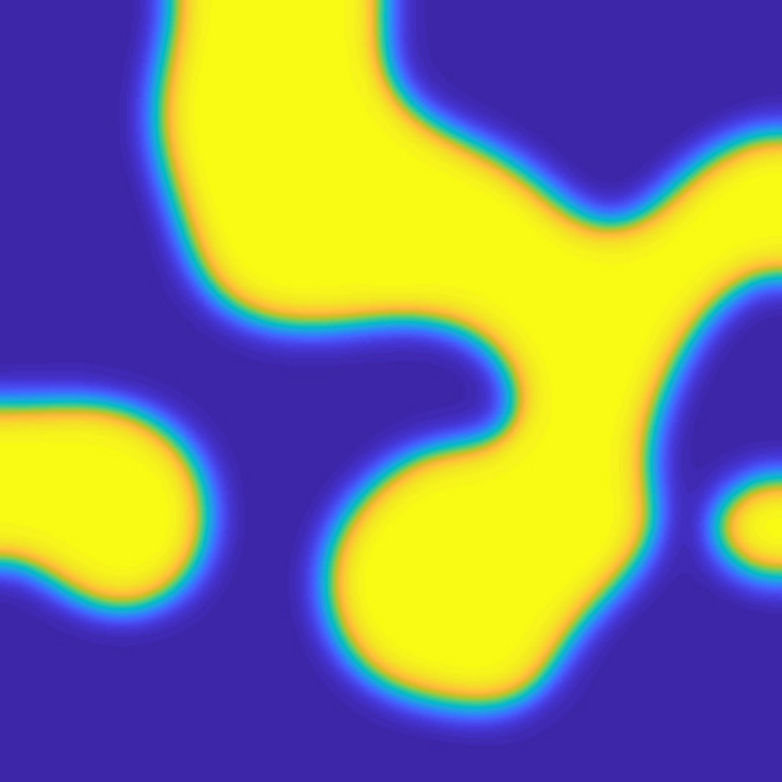} \includegraphics[scale=0.25]{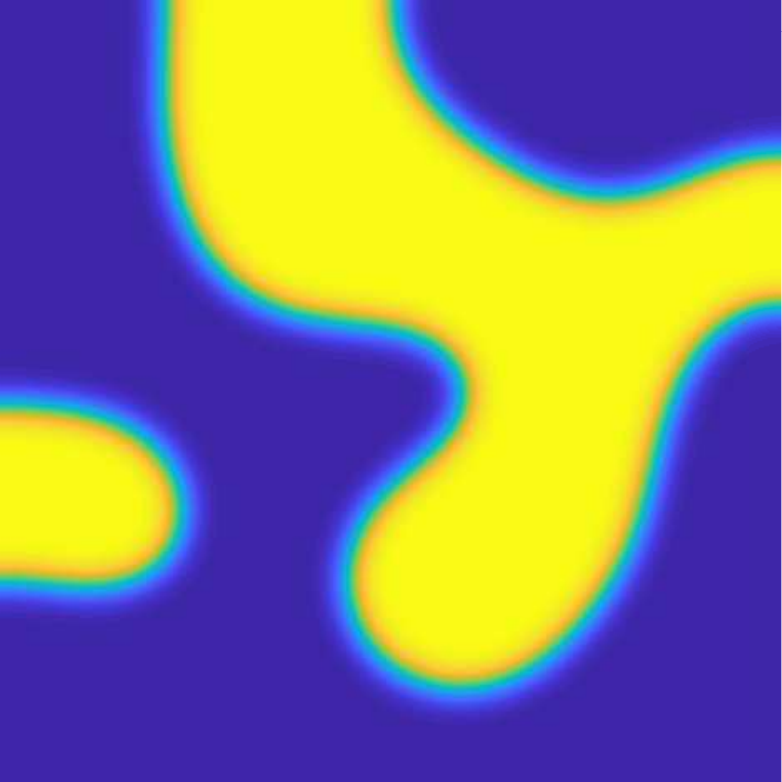}
		\includegraphics[scale=0.25]{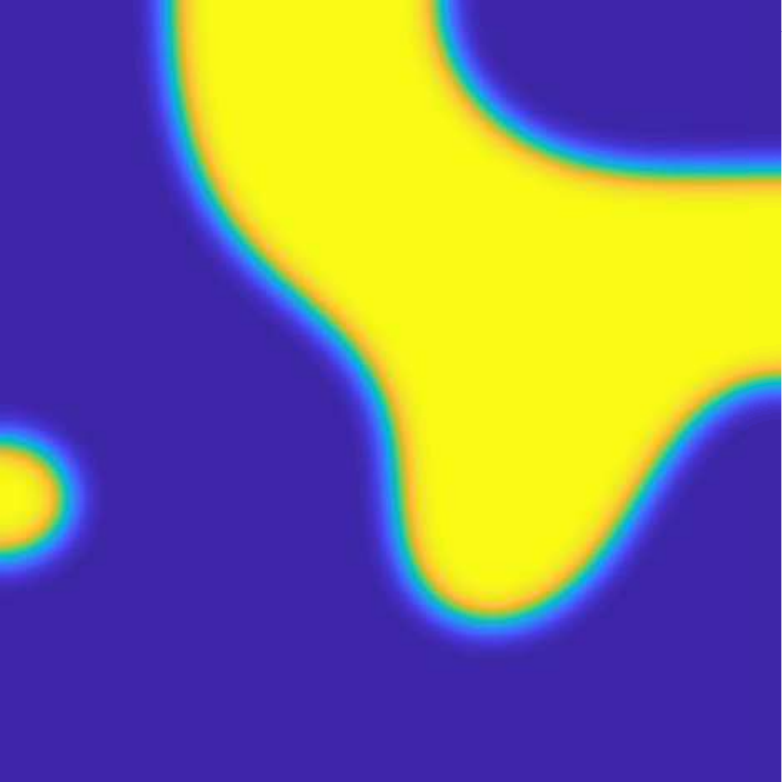}
		\includegraphics[scale=0.25]{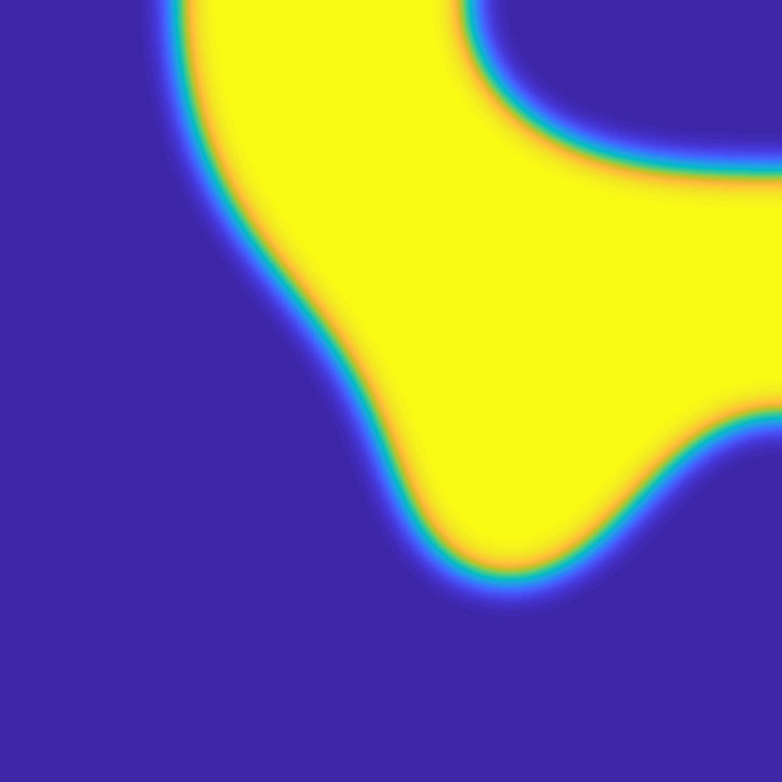}
		\includegraphics[scale=0.25]{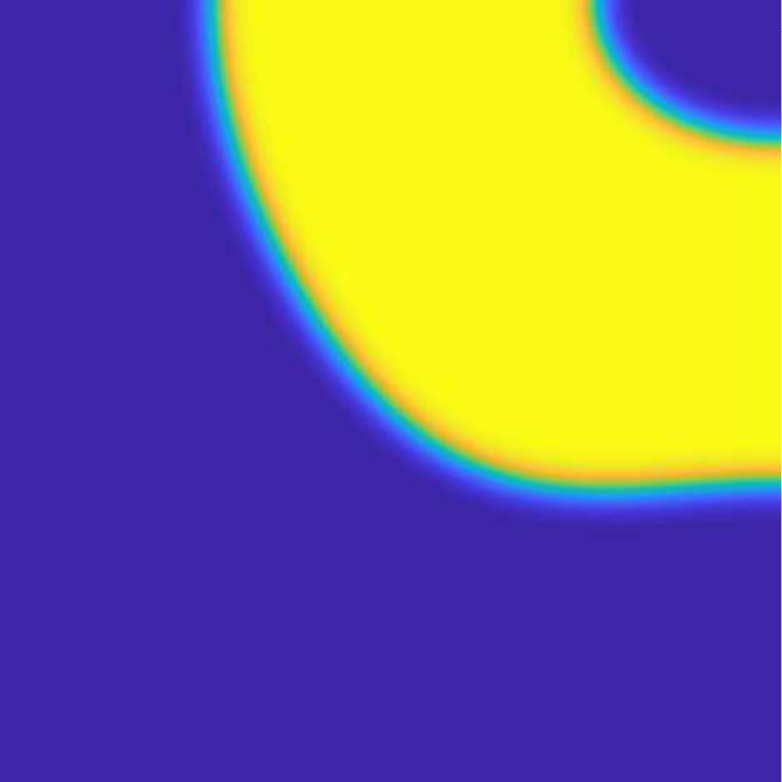}
		\includegraphics[scale=0.25]{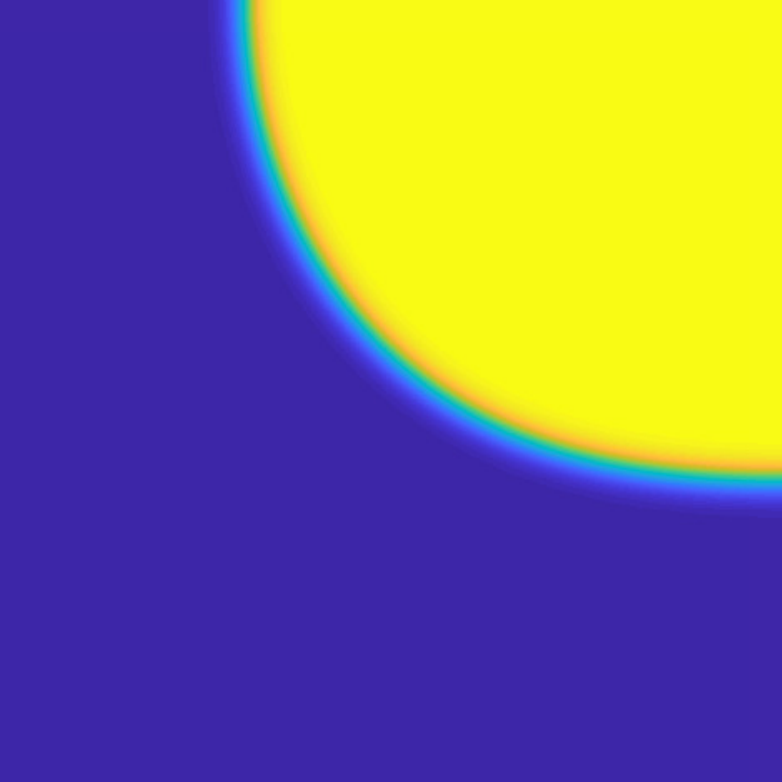}}
		\caption{The 2D dynamical evolution of the profile $\phi$ of the spin decomposition instances under the CNLFAV and ACSAV algorithms. }
	\end{center}
\end{figure}

\begin{figure}[tp]
	\begin{center}
		\includegraphics[scale=0.75]{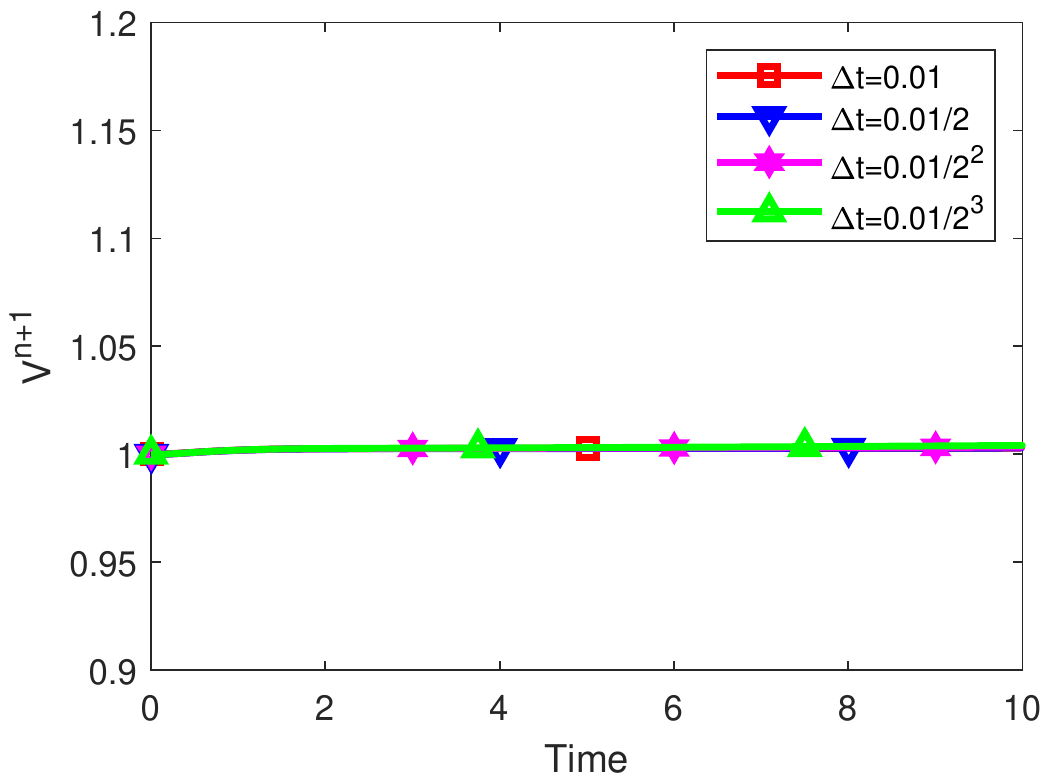}
		\caption{The evolution of $V^{n+1}$ computed in the ACSAV algorithm.}\label{fig:V_n+1}
	\end{center}
\end{figure}

\subsection{Shrinking circular bubble}
We refer to \cite{xfYang20} to simulate the shrinking process of a circular bubble in the computational domain $\Omega=[0,2\pi]^2$.
The initial conditions are given as follows:
\begin{equation*}
\left\{
\begin{aligned}
&\phi^0(x,y)=1+\sum_{i=1}^{2}\tanh(\frac{r_i-\sqrt{(x-x_i)^2+(y-y_i)^2}}{1.5\eta})\\
&{u}^0(x,y)={0},\;p^0(x,y)=0
\end{aligned}
\right.,
\end{equation*}
where $r_1=1.4$, $r_2=0.5$, $x_1=\pi-0.8$, $x_2=\pi+1.7$, $y_1=y_2=\pi$. The configuration profile of the initial condition of $\phi$ is given in the first pictures of Fig.~\ref{fig:AC-phi-evolution} and Fig.~\ref{fig:ACSAV-phi}. It shows two circles with different radii in the initial time step.
The parameters of the problem are  $\eta = 0.04$, $\lambda = 0.01$, $M = 10$, $\nu = 1$.

\begin{figure}[tp]
	\begin{center}
		\includegraphics[scale=0.23]{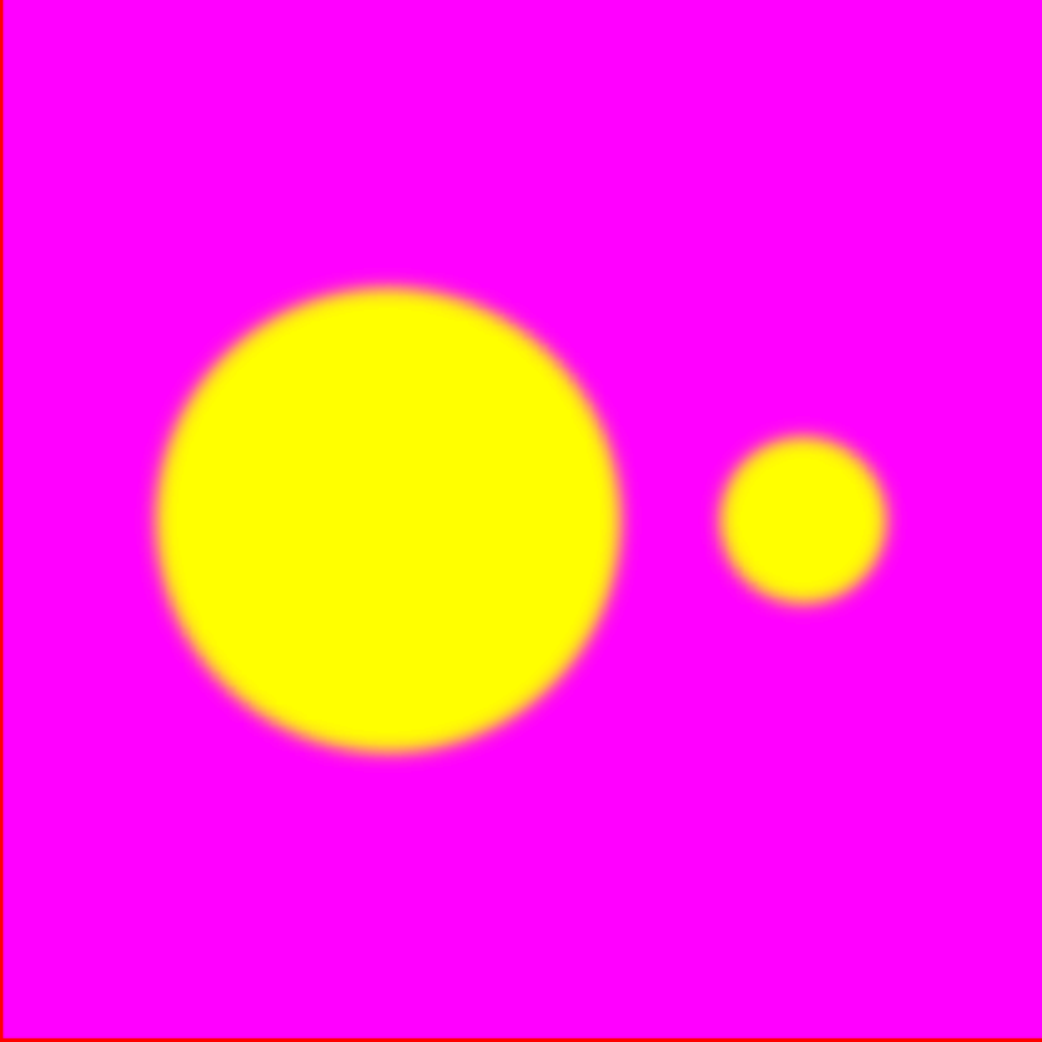} \includegraphics[scale=0.23]{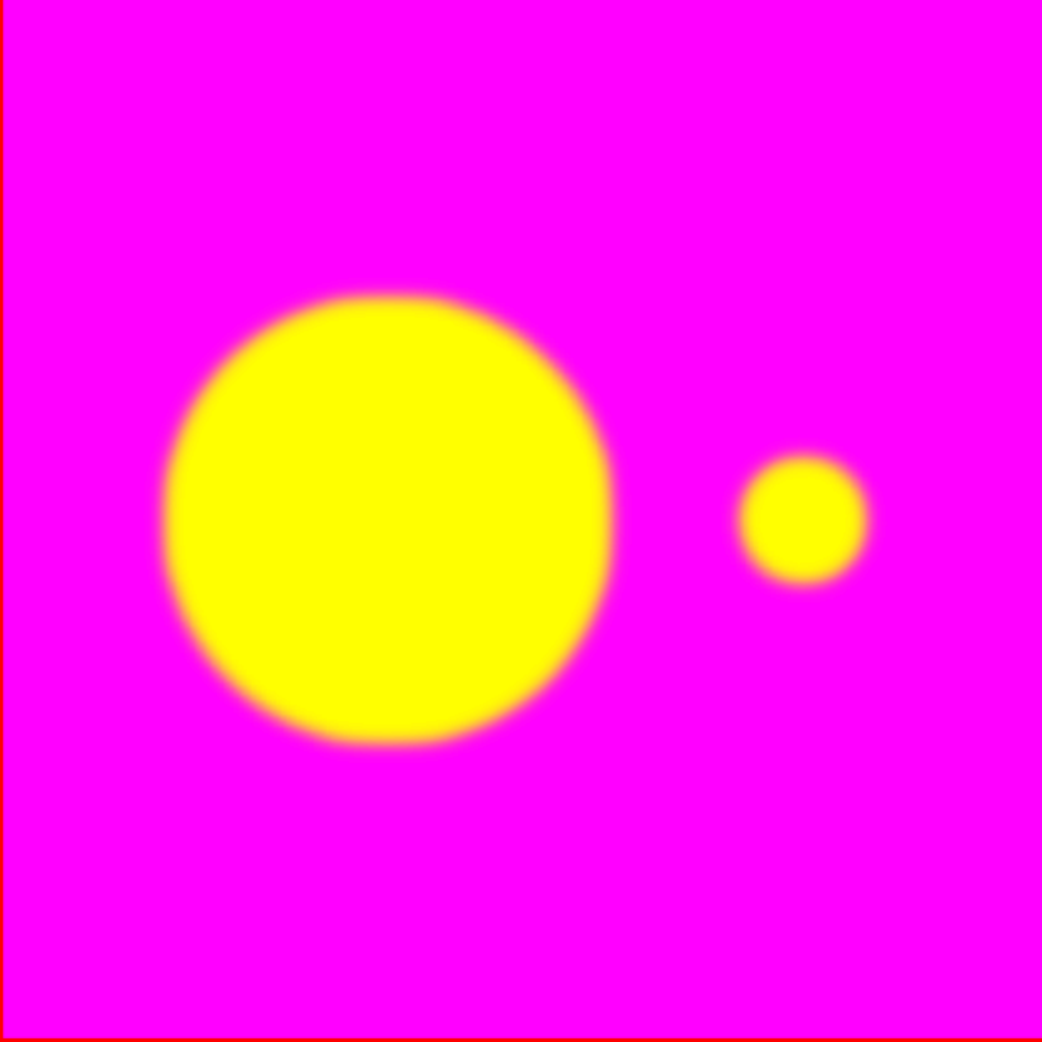}
		\includegraphics[scale=0.23]{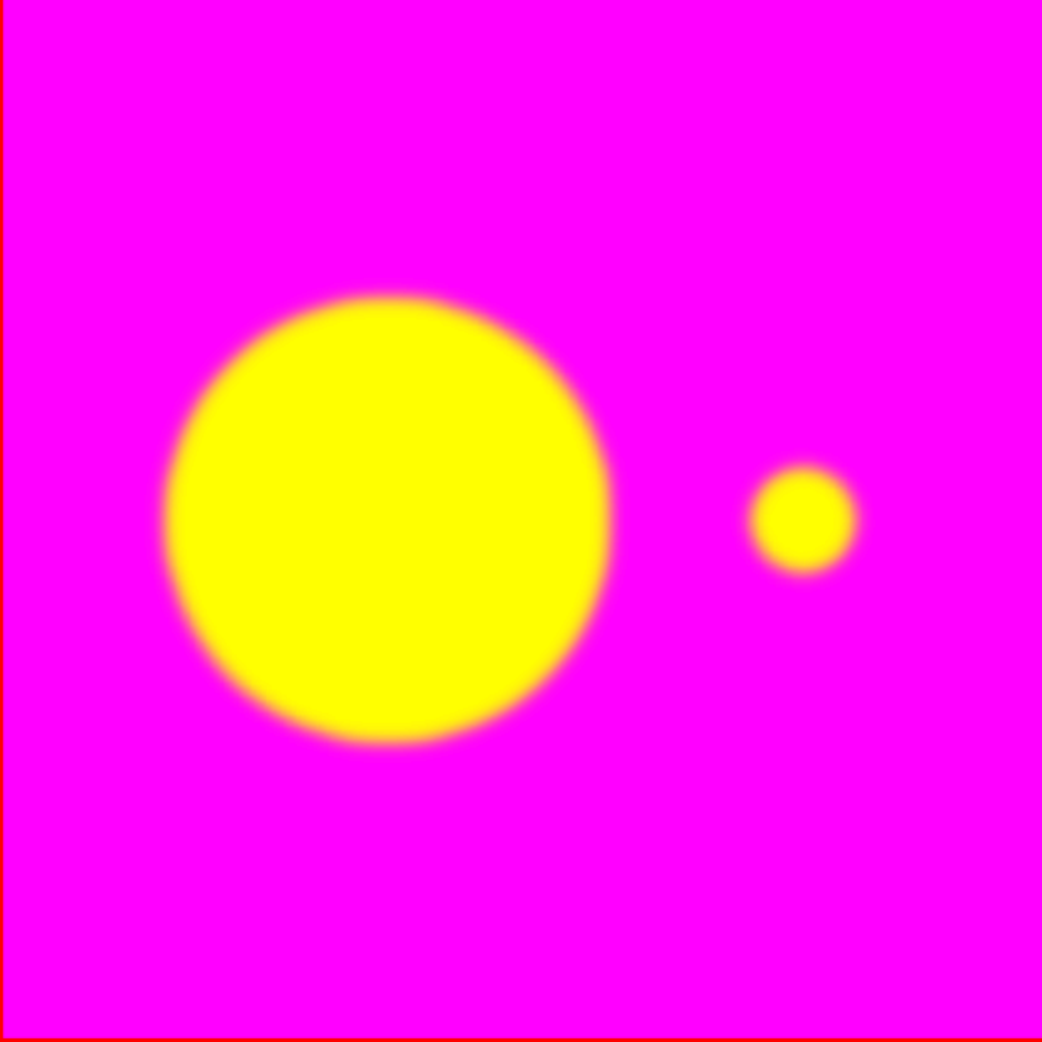}
		\includegraphics[scale=0.23]{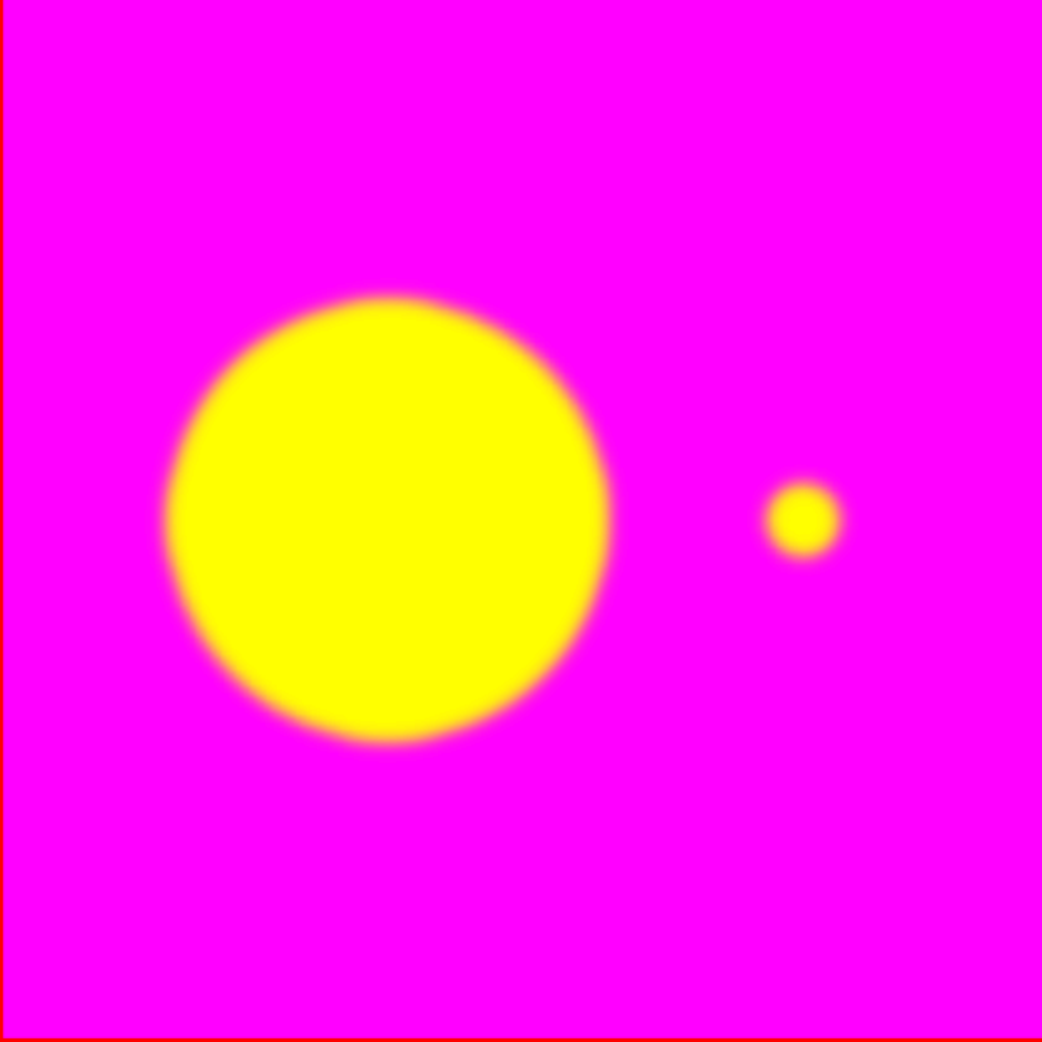}
		\includegraphics[scale=0.23]{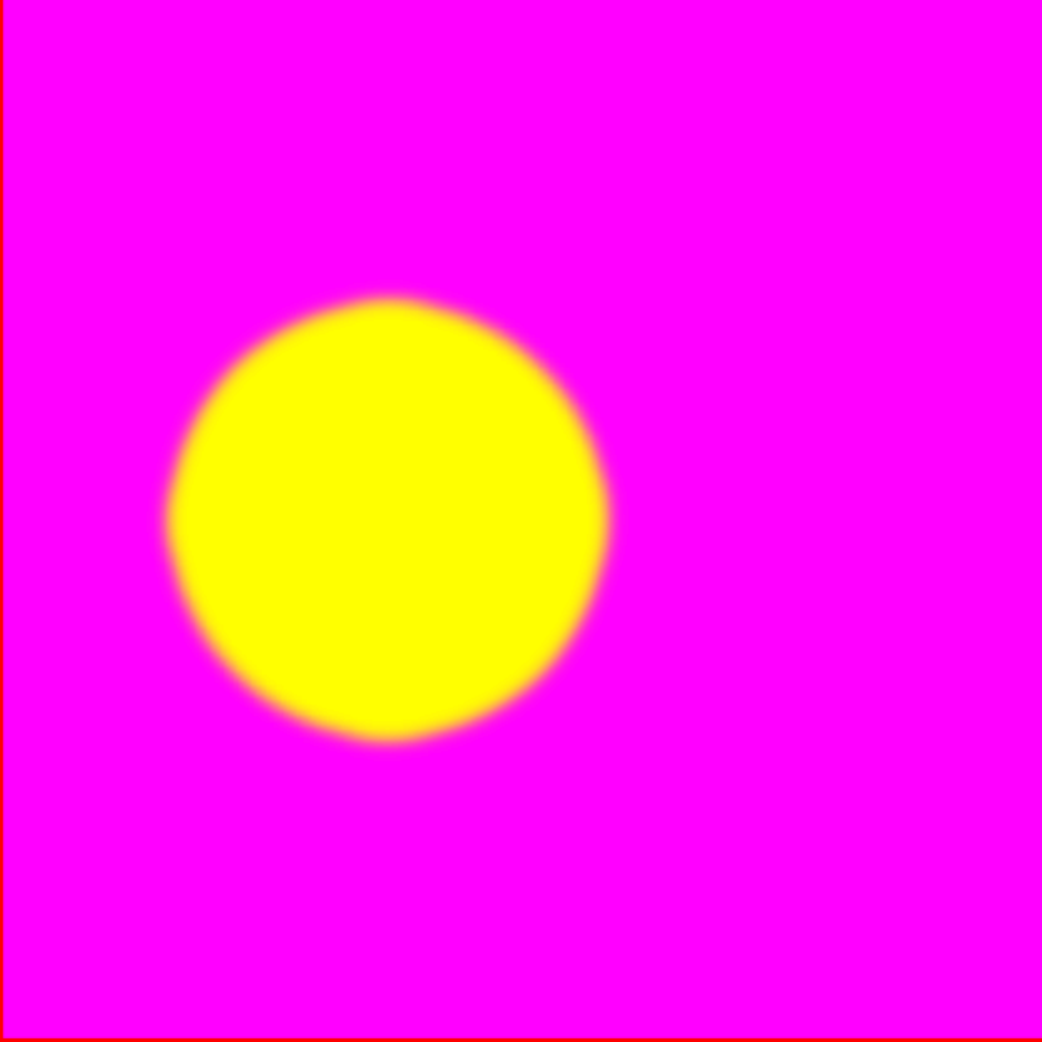}
		
		\caption{The variable $\phi$ computed by CNLFAC algorithm at $t=0$, $t=0.5$, $t=0.75$, $t=1.0$ and $t=1.25$. The parameters are $\eta = 0.04, \lambda = 0.01, M = 10, \nu = 1, \Delta t = 0.025, \Omega=[0,2\pi]\times[0,2\pi]$.}
		\label{fig:AC-phi-evolution}
	\end{center}
\end{figure}
\begin{figure}[tp]
	\begin{center}
		\includegraphics[scale=0.23]{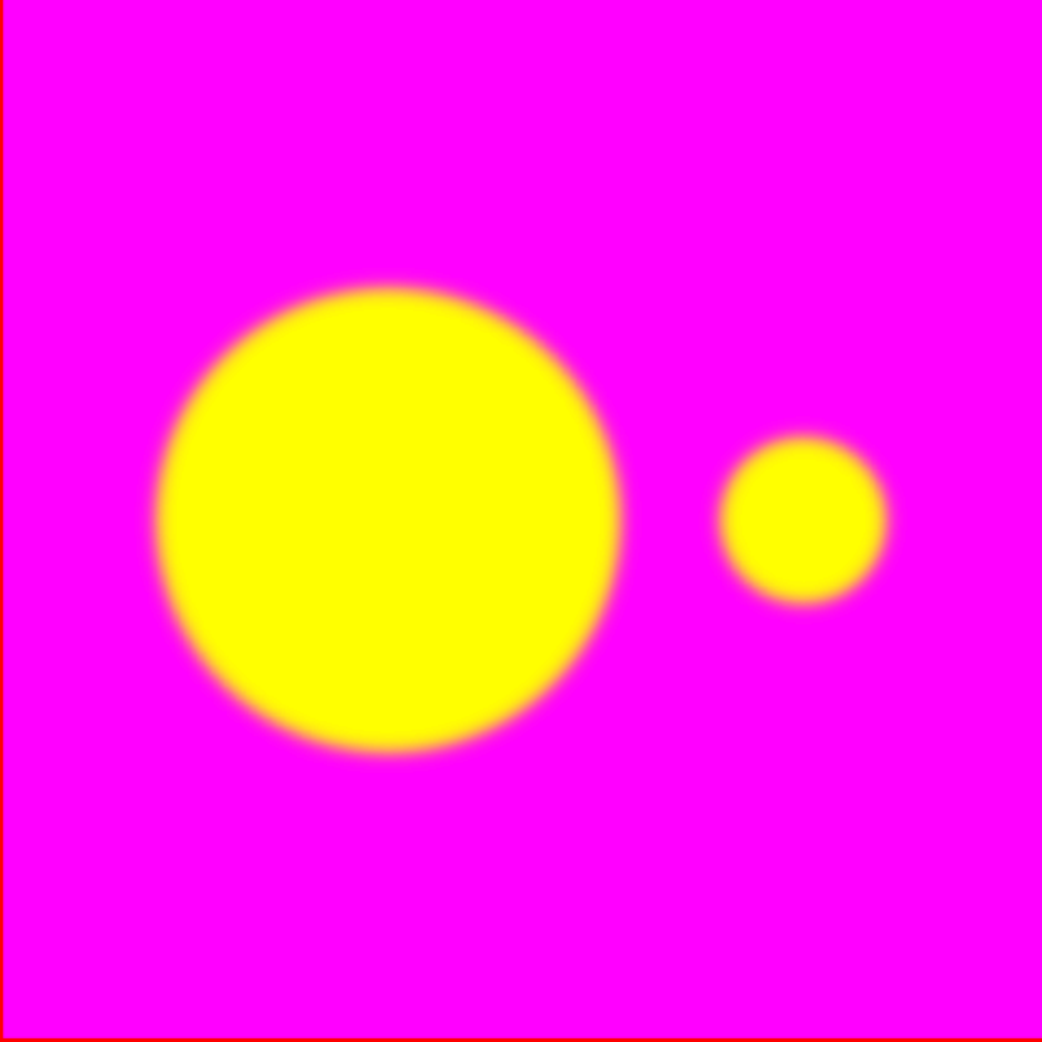} \includegraphics[scale=0.23]{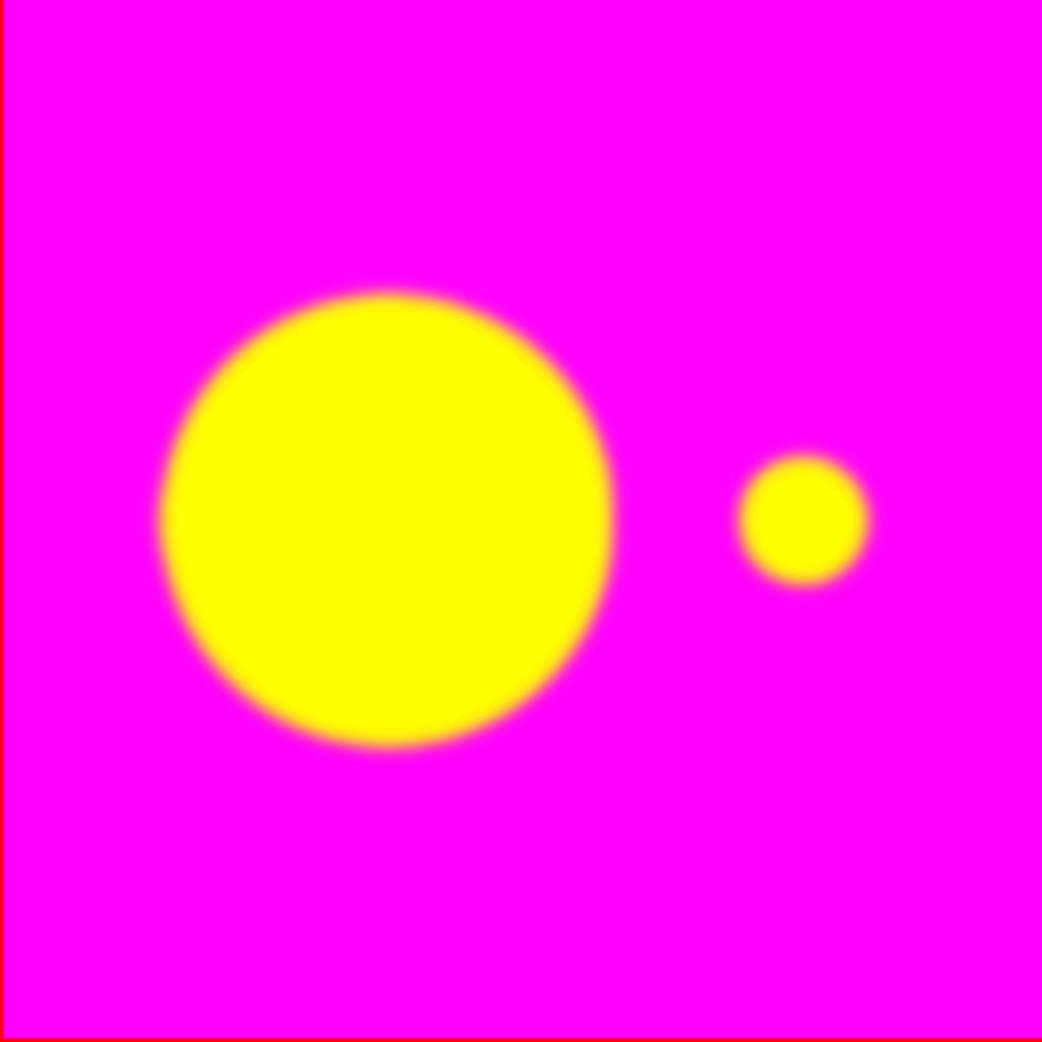}
		\includegraphics[scale=0.23]{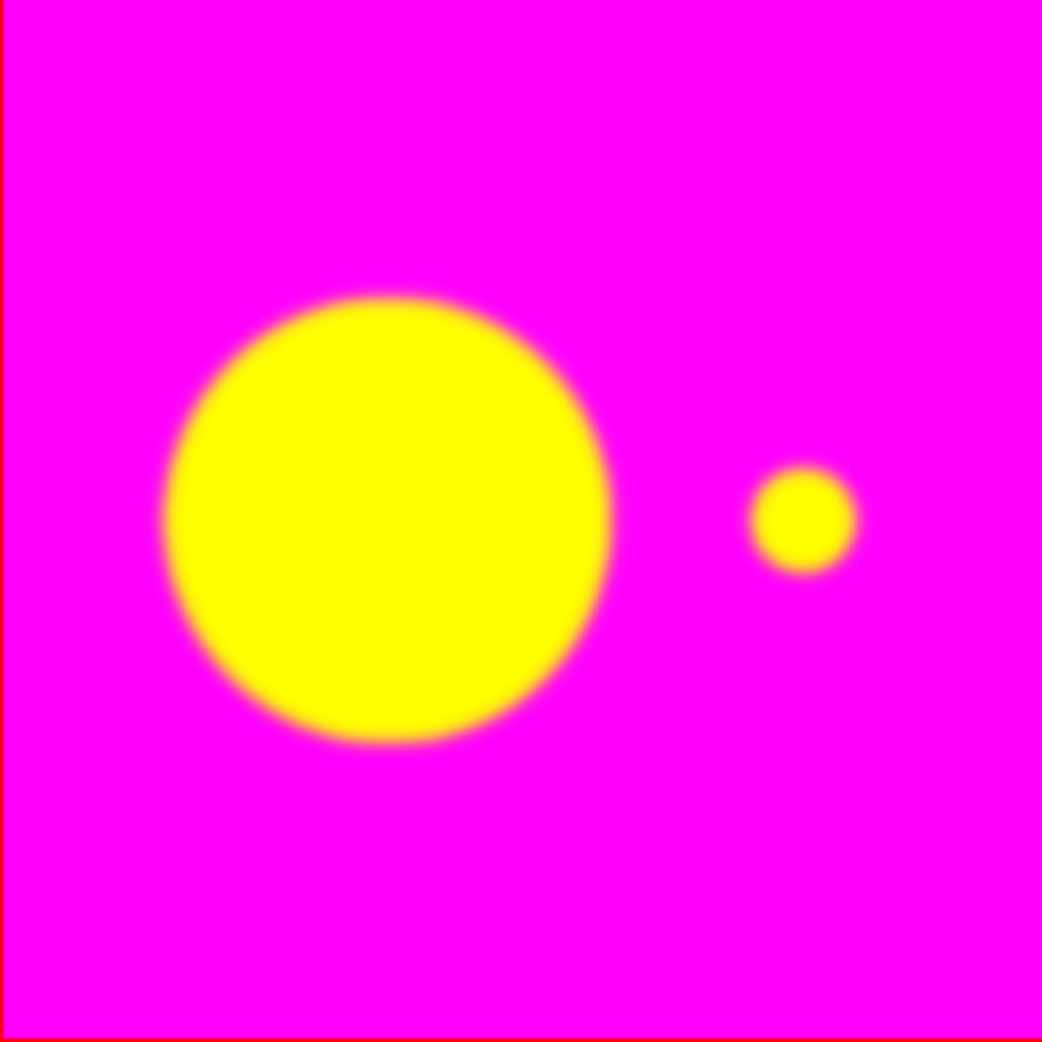}
		\includegraphics[scale=0.23]{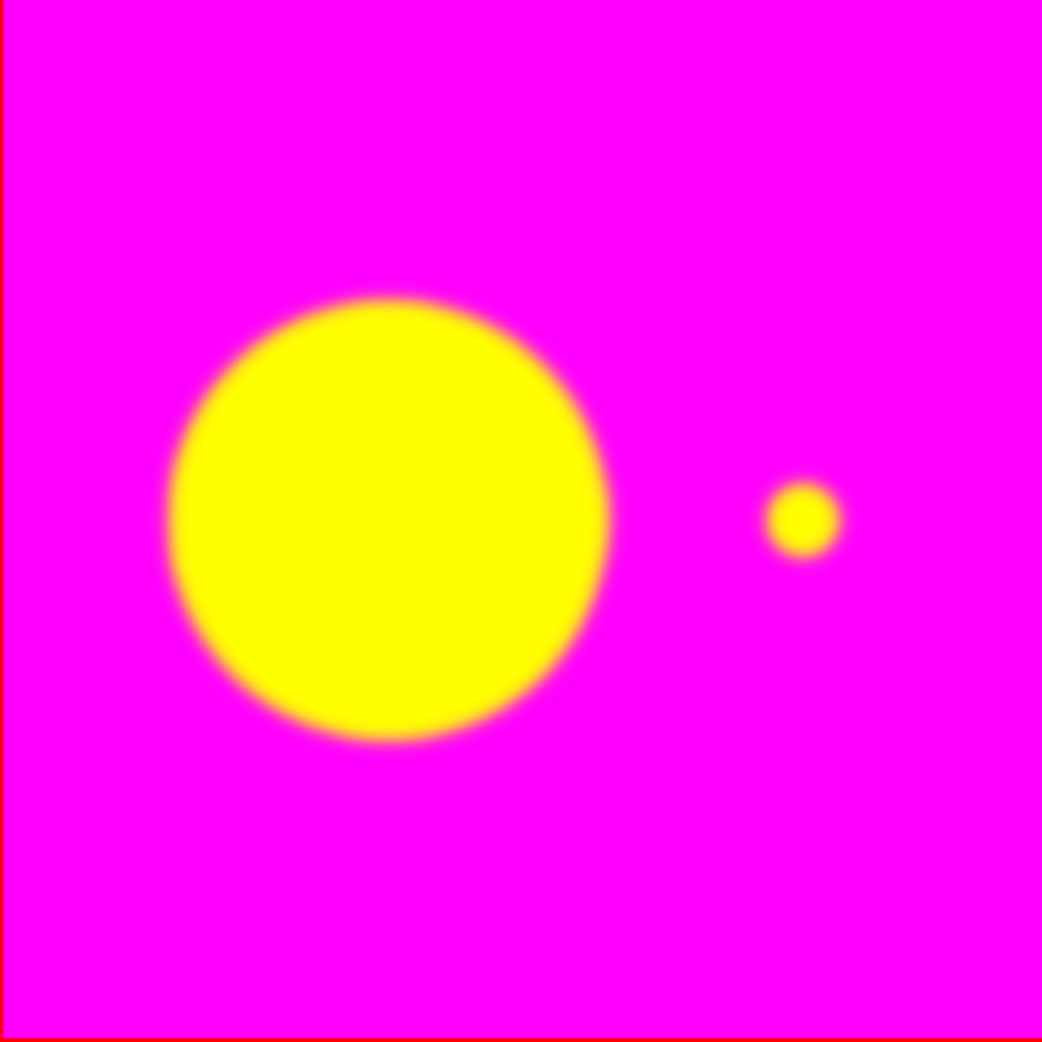}
		\includegraphics[scale=0.23]{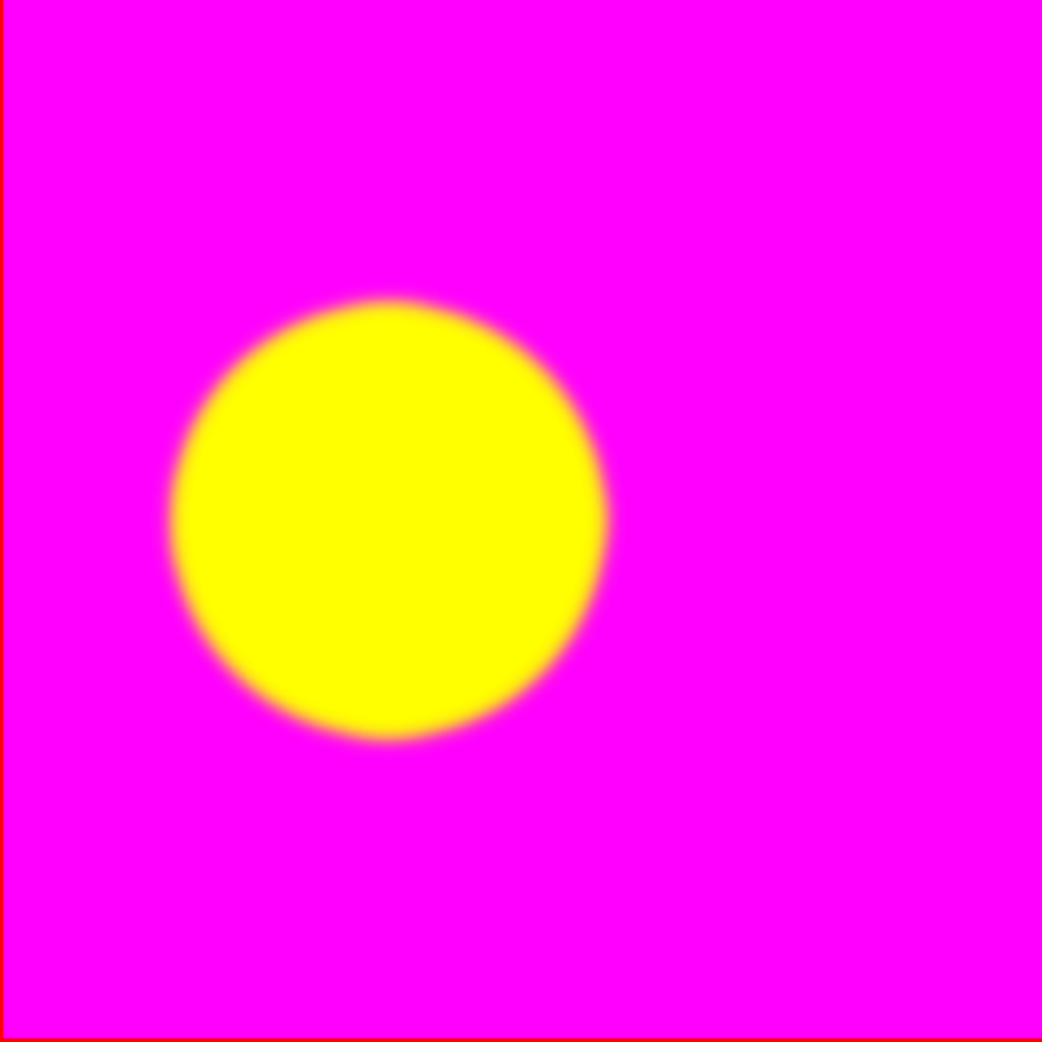}
		
		\caption{The variable $\phi$ computed by ACSAV algorithm at $t=0$, $t=0.5$, $t=0.75$, $t=1.0$ and $t=1.25$. The parameters are $\eta = 0.04, \lambda = 0.01, M = 10, \nu = 1, \Delta t = 0.025, \Omega=[0,2\pi]\times[0,2\pi]$.}
		\label{fig:ACSAV-phi}
	\end{center}
\end{figure}

We use CNLFAC and ACSAV algorithms to solve the ACNS system with time step size $\Delta t=0.025$. Fig.~\ref{fig:AC-phi-evolution} and Fig.~\ref{fig:ACSAV-phi} depict the profile state of the phase field variable $\phi$ at different times until steady state occurs. Due to the influence of roughening, the small circle is absorbed by the large circle, and the volume of the small circle gradually decreases. At $t=1.25$, the small circle disappears.

\subsection{Shape relaxation}\label{64}
In this section, we refer to \cite{Han17} to simulate the merging process of two circular bubbles using the ACNS system.

In the numerical experiment, the domain is set as $\Omega=[0,1.5]\times[0,1.5]$ and the initial conditions are given by
\begin{equation*}
\left\{
\begin{aligned}
&\phi^0(x,y)=1+\sum_{i=1}^{2}\tanh(\frac{r_i-\sqrt{(x-x_i)^2+(y-y_i)^2}}{\eta})\\
&{u}^0(x,y)={0},\;p^0(x,y)=0
\end{aligned}
\right.,
\end{equation*}
where $r_1=0.25$, $r_2=0.25$, $x_1=0.5$, $x_2=1$, $y_1=y_2=0.75$.
In this simulation, the parameters are $\eta = 0.02$, $\lambda = 0.01$, $M = 10$, $\nu = 1$, $\Delta t = 0.005$.

Fig.~\ref{fig:CNLFACkiss-phi} shows the merging process of two circular bubbles simulated by the CNLFAC algorithm. We find that, because of the surface tension, two adjacent circles quickly join together, gradually merge, and relax into a circle with the least surface energy. Fig.~\ref{fig:ACSAVkiss-phi} illustrates a similar process simulated by the ACSAV scheme. Apparently, simulations by the two schemes are almost identical.

\begin{figure}[tp]
	\centering
	\subfigure{
		\begin{minipage}[b]{.3\linewidth}
			\centering
			\includegraphics[scale=0.36]{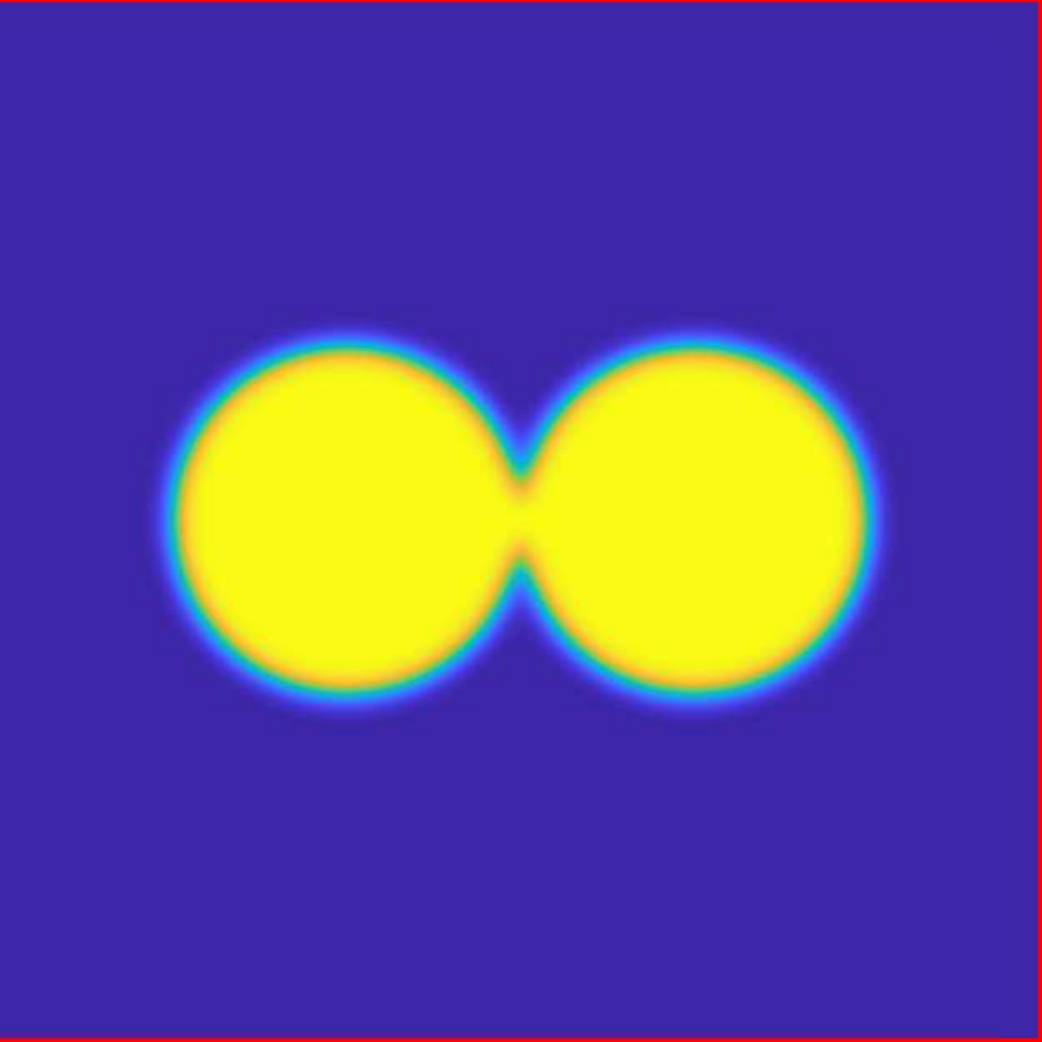}
		\end{minipage}
	}
	\subfigure{
		\begin{minipage}[b]{.3\linewidth}
			\centering
			\includegraphics[scale=0.36]{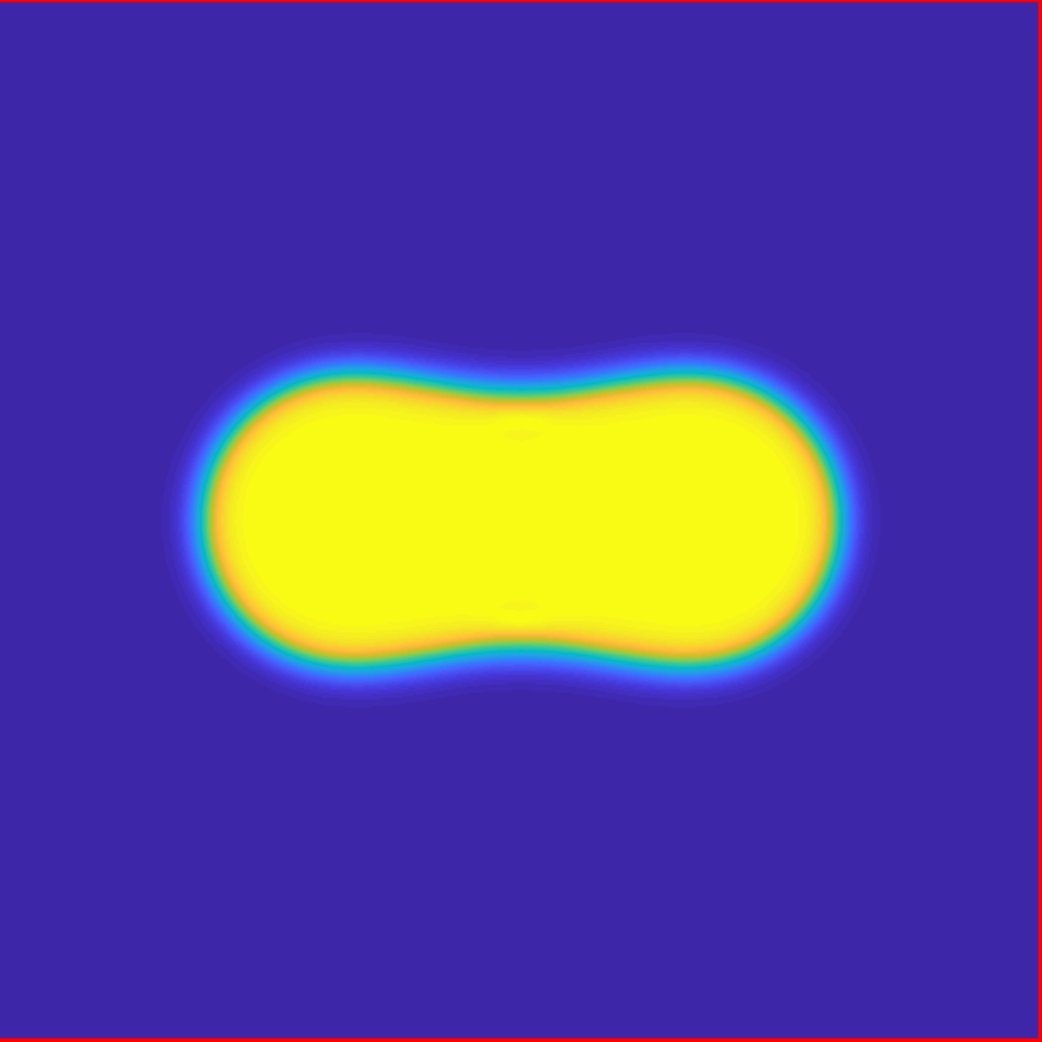}
		\end{minipage}
	}
	\subfigure{
		\begin{minipage}[b]{.3\linewidth}
			\centering
			\includegraphics[scale=0.36]{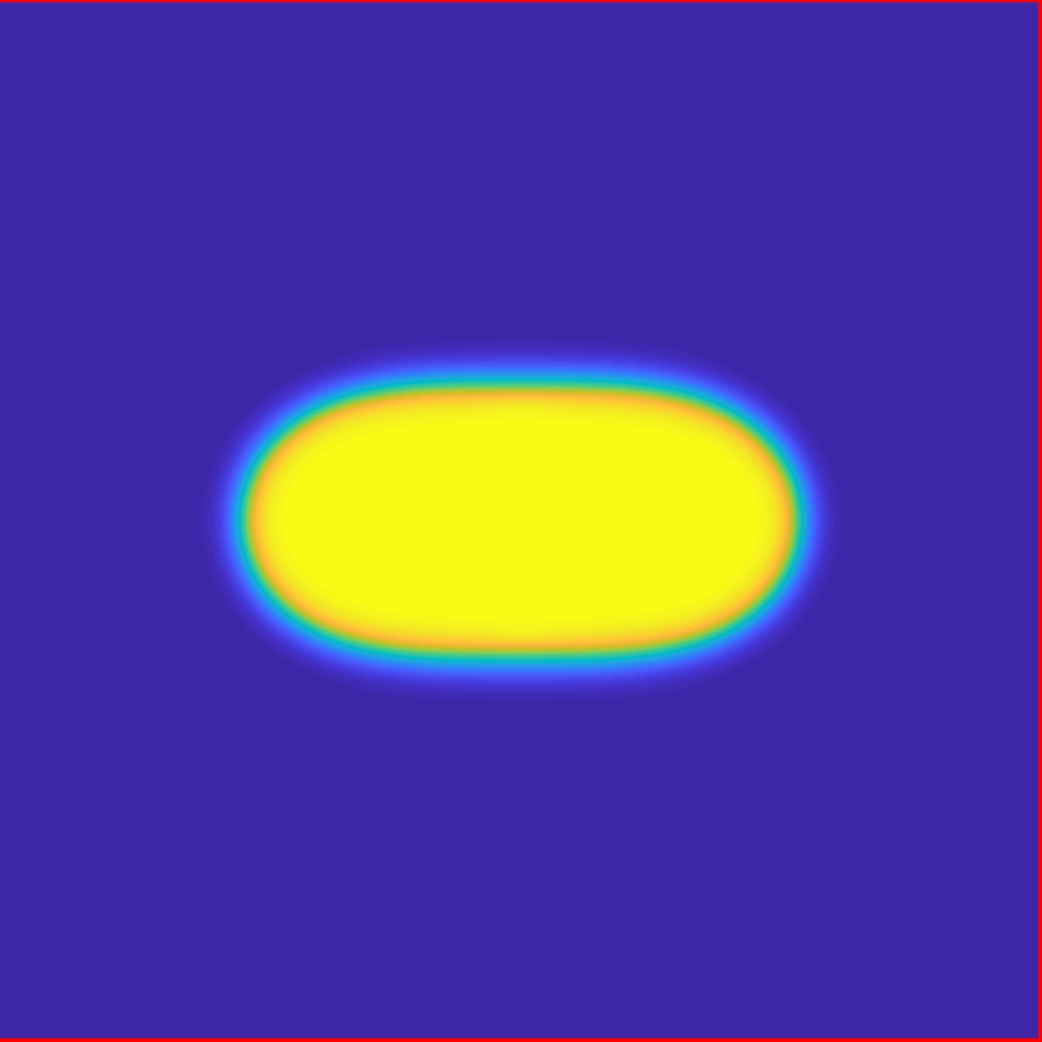}
		\end{minipage}
	}
	\subfigure{
		\begin{minipage}[b]{.3\linewidth}
			\centering
			\includegraphics[scale=0.36]{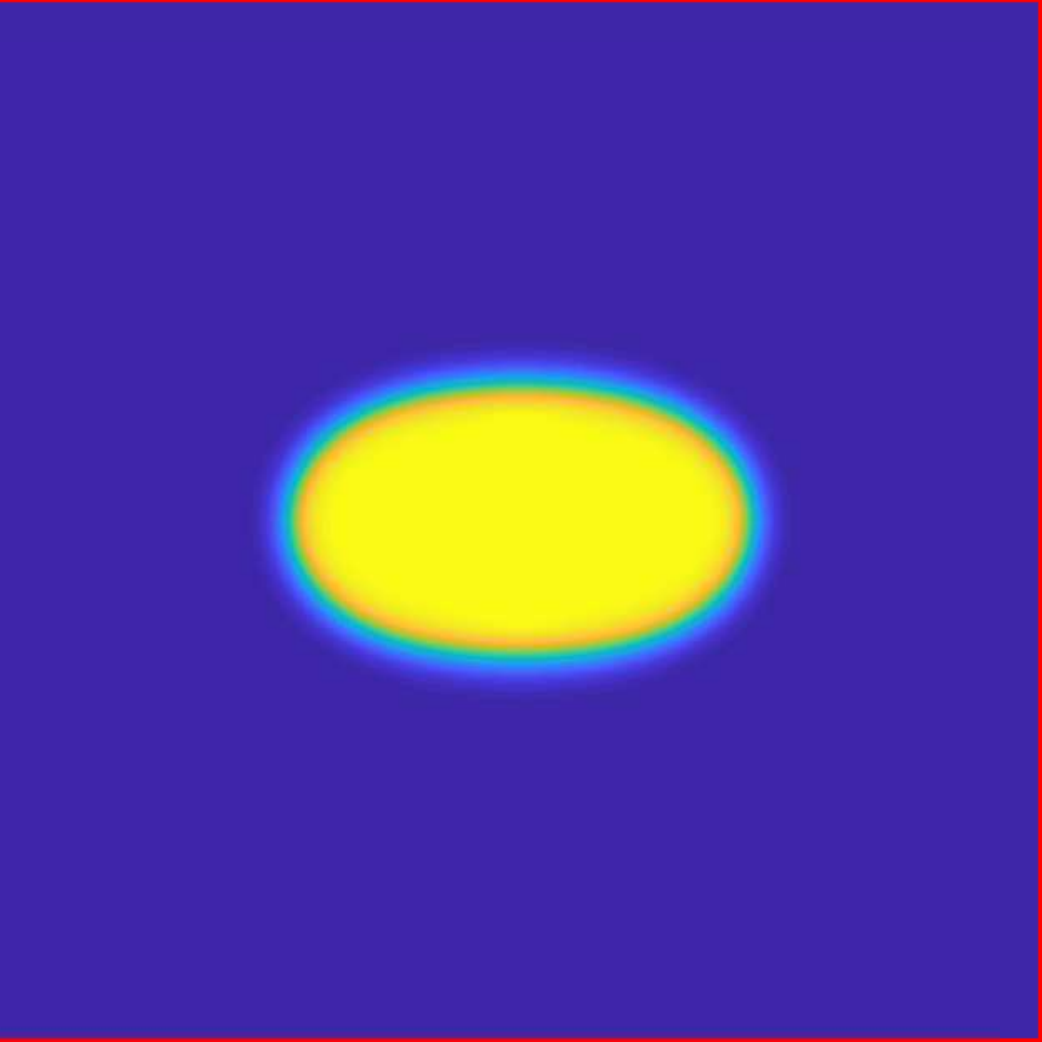}
		\end{minipage}
	}
	\subfigure{
		\begin{minipage}[b]{.3\linewidth}
			\centering
			\includegraphics[scale=0.36]{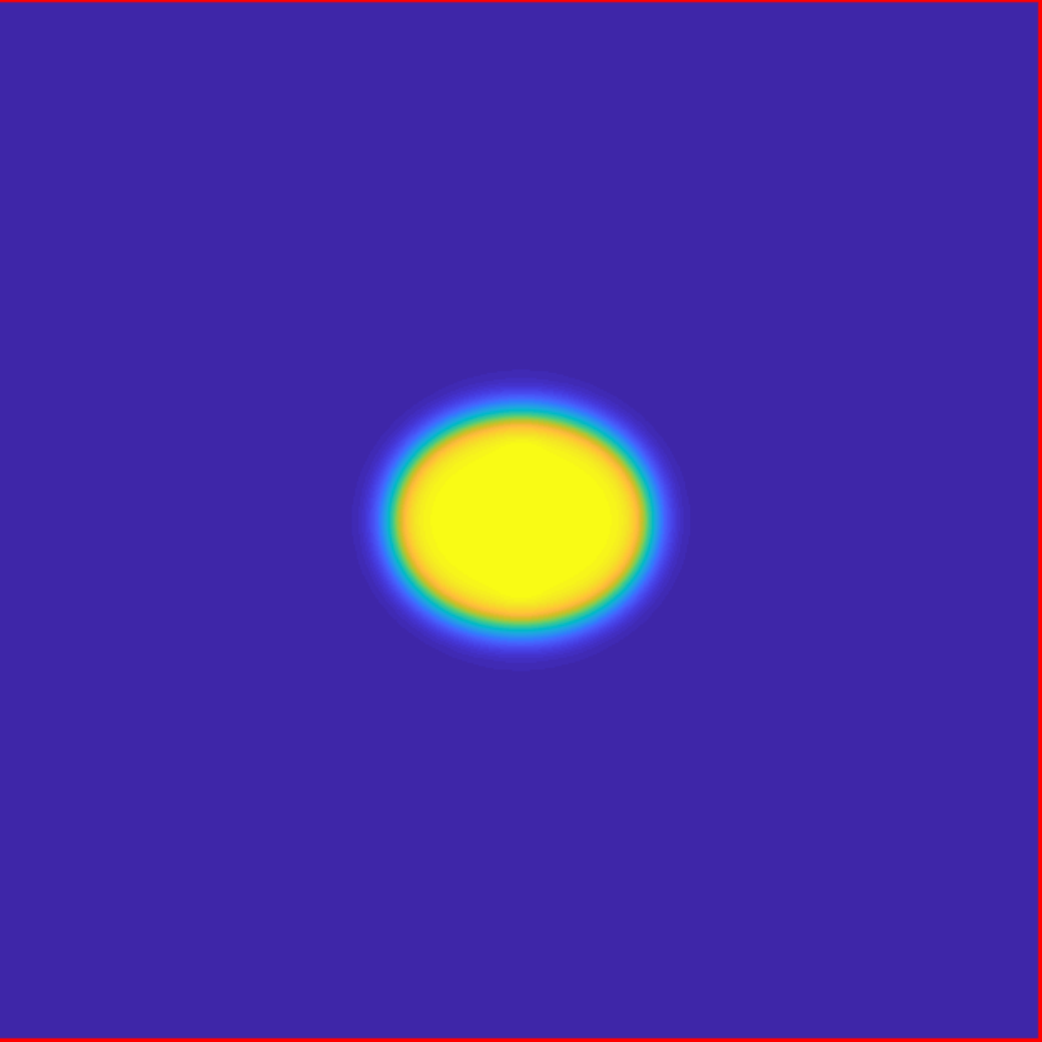}
		\end{minipage}
	}
	\subfigure{
		\begin{minipage}[b]{.3\linewidth}
			\centering
			\includegraphics[scale=0.36]{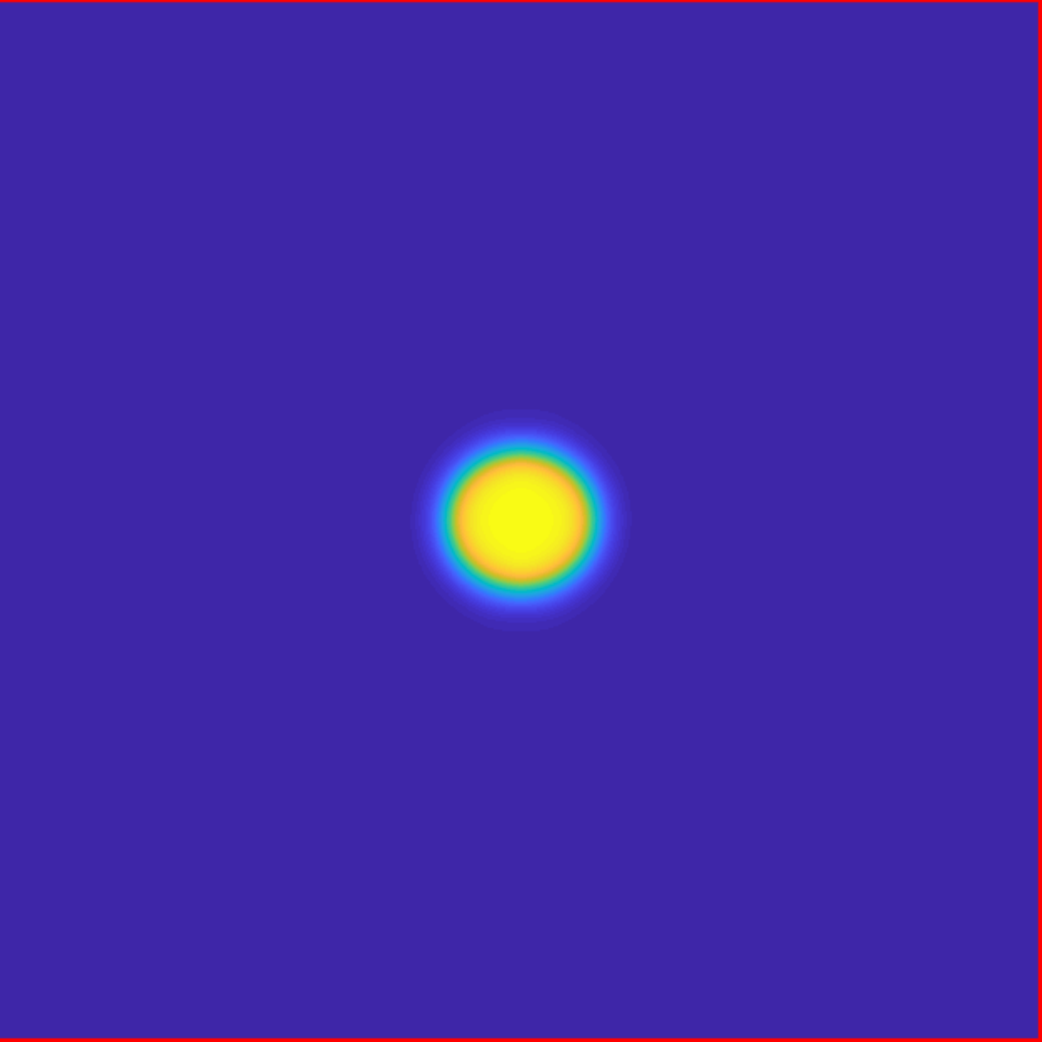}
		\end{minipage}
	}
	\caption{The process of merging and relaxing of two kissing circles obtained by the CNLFAC algorithm. 
	From left to right, top to bottom, $t=0$, $t=0.1$, $t=0.2$, $t=0.3$, $t=0.5$ and $t=0.6$, $\eta = 0.02$, $\lambda = 0.01$, $M = 10$, $\nu = 1$, $\Delta t = 0.005$.}
	\label{fig:CNLFACkiss-phi}
\end{figure}

\begin{figure}[tp]
	\centering
	\subfigure{
		\begin{minipage}[b]{.3\linewidth}
			\centering
			\includegraphics[scale=0.36]{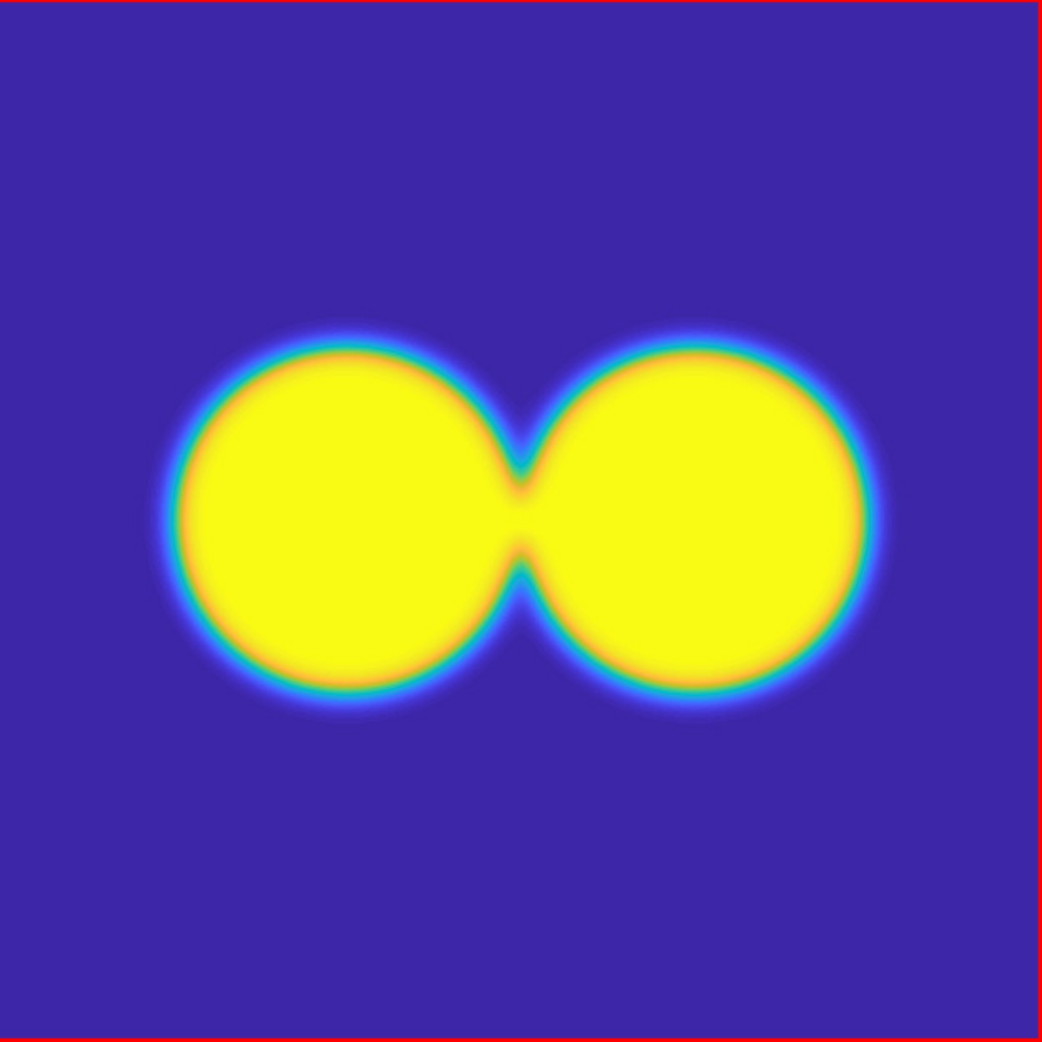}
		\end{minipage}
	}
	\subfigure{
		\begin{minipage}[b]{.3\linewidth}
			\centering
			\includegraphics[scale=0.36]{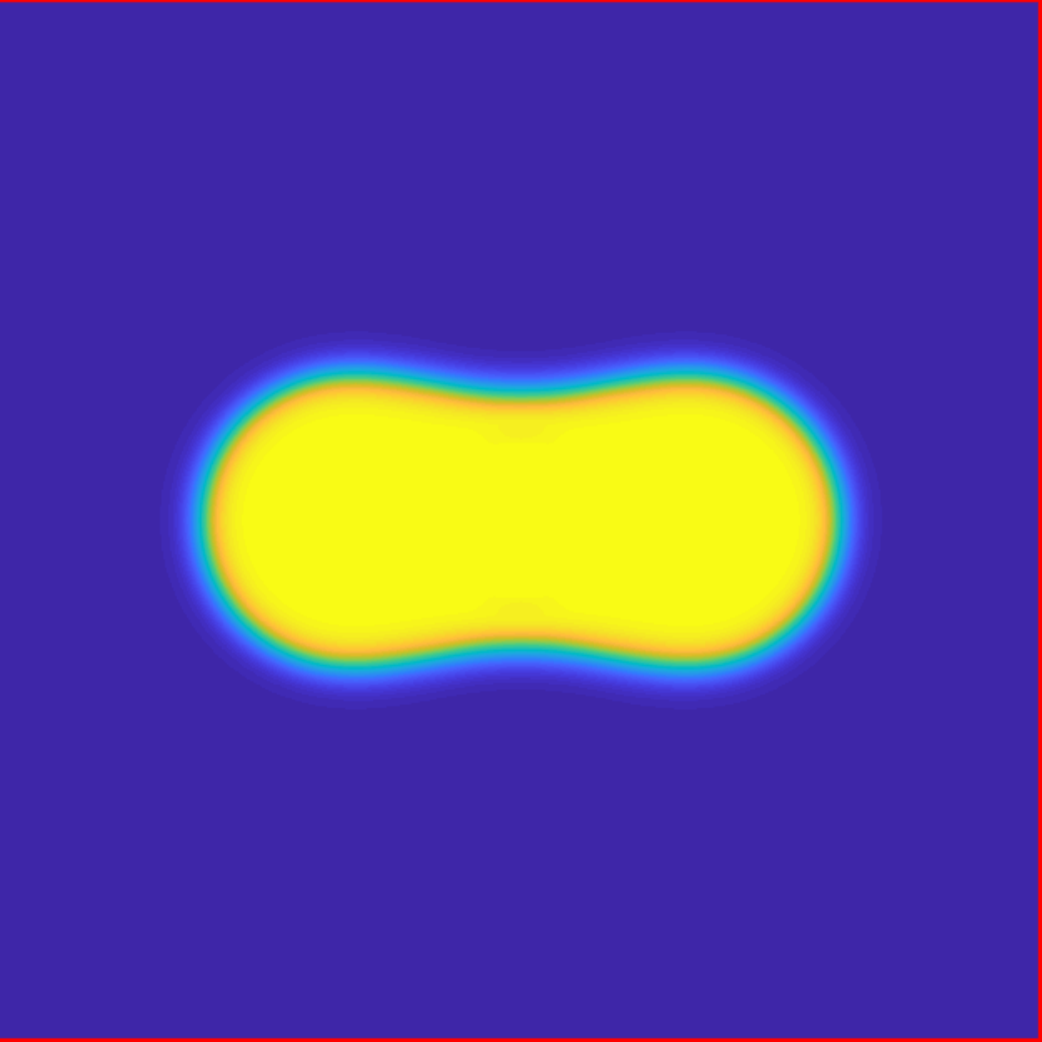}
		\end{minipage}
	}
	\subfigure{
		\begin{minipage}[b]{.3\linewidth}
			\centering
			\includegraphics[scale=0.36]{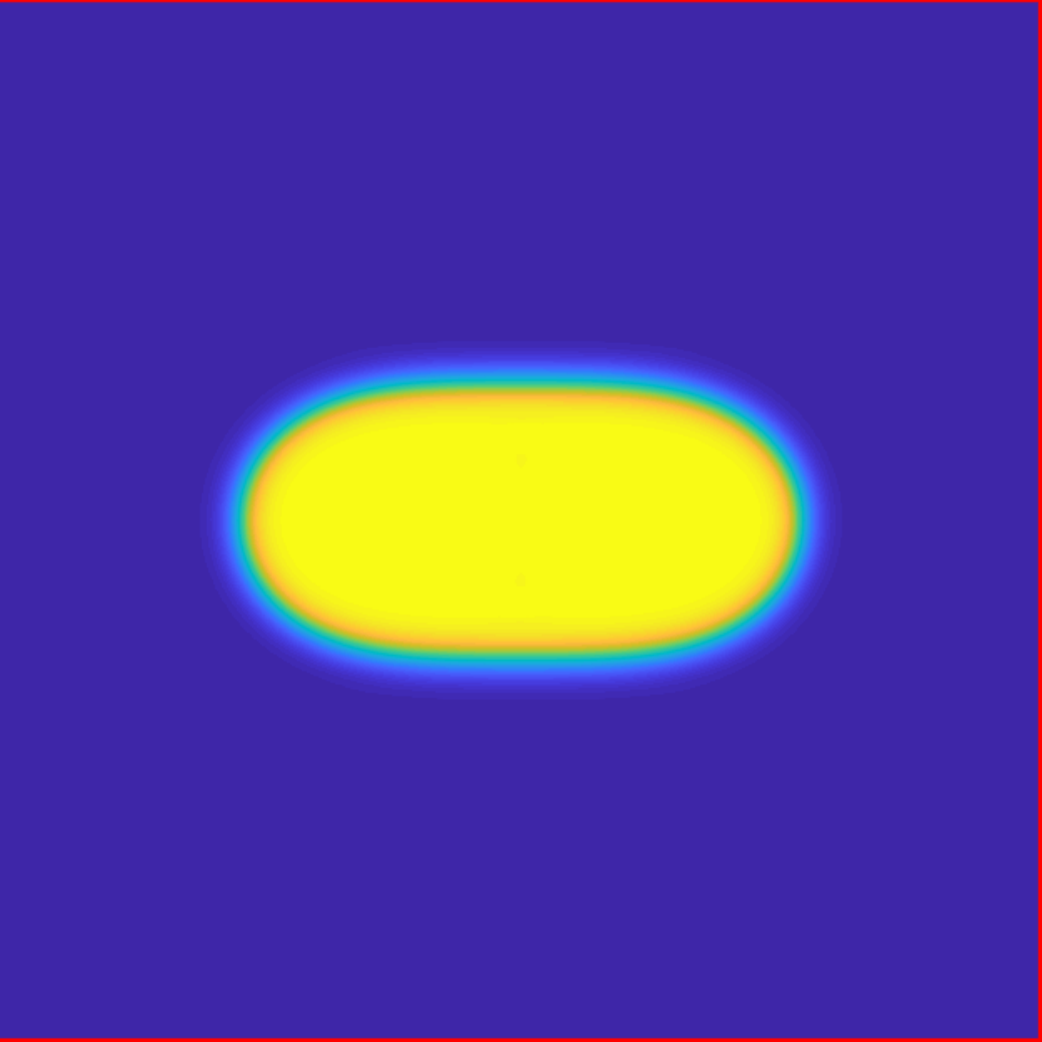}
		\end{minipage}
	}
	\subfigure{
		\begin{minipage}[b]{.3\linewidth}
			\centering
			\includegraphics[scale=0.36]{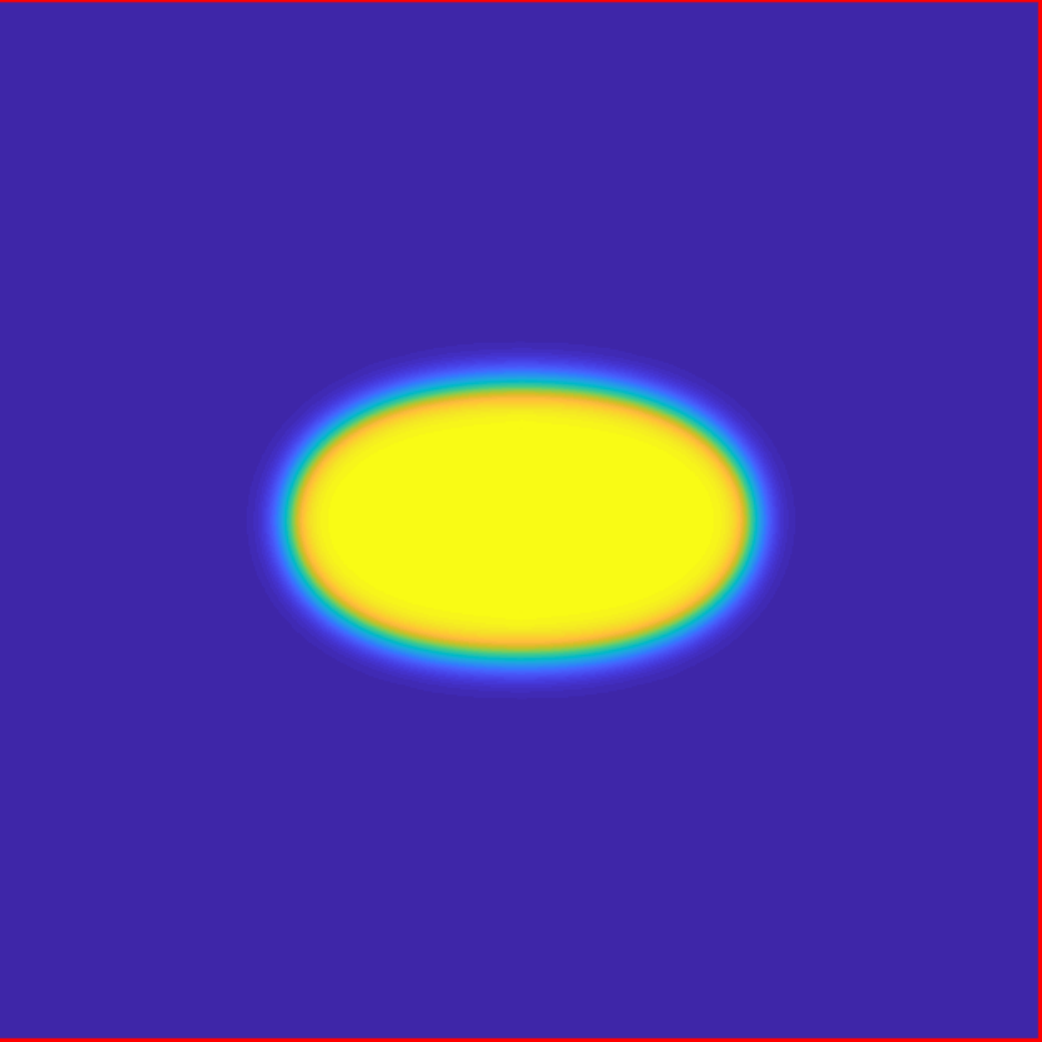}
		\end{minipage}
	}
	\subfigure{
		\begin{minipage}[b]{.3\linewidth}
			\centering
			\includegraphics[scale=0.36]{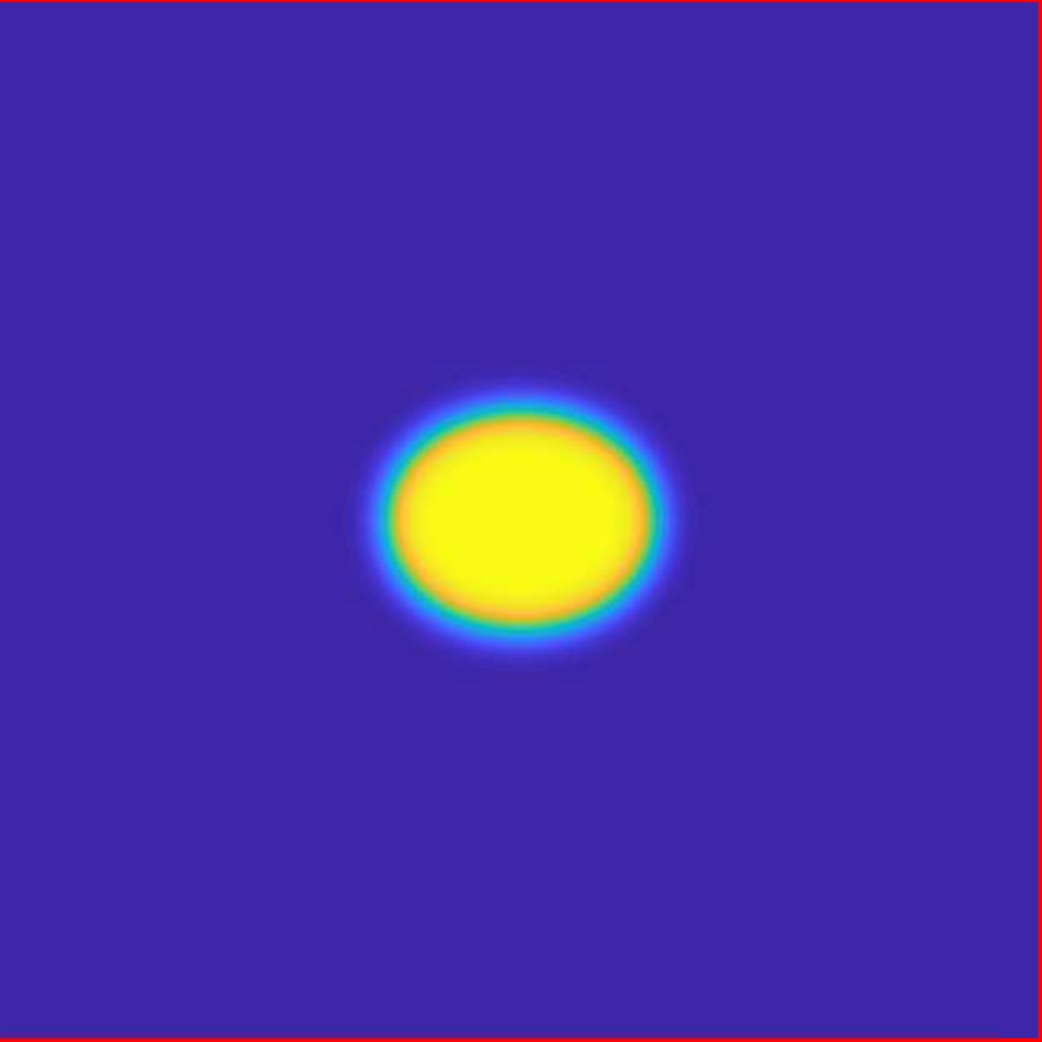}
		\end{minipage}
	}
	\subfigure{
		\begin{minipage}[b]{.3\linewidth}
			\centering
			\includegraphics[scale=0.36]{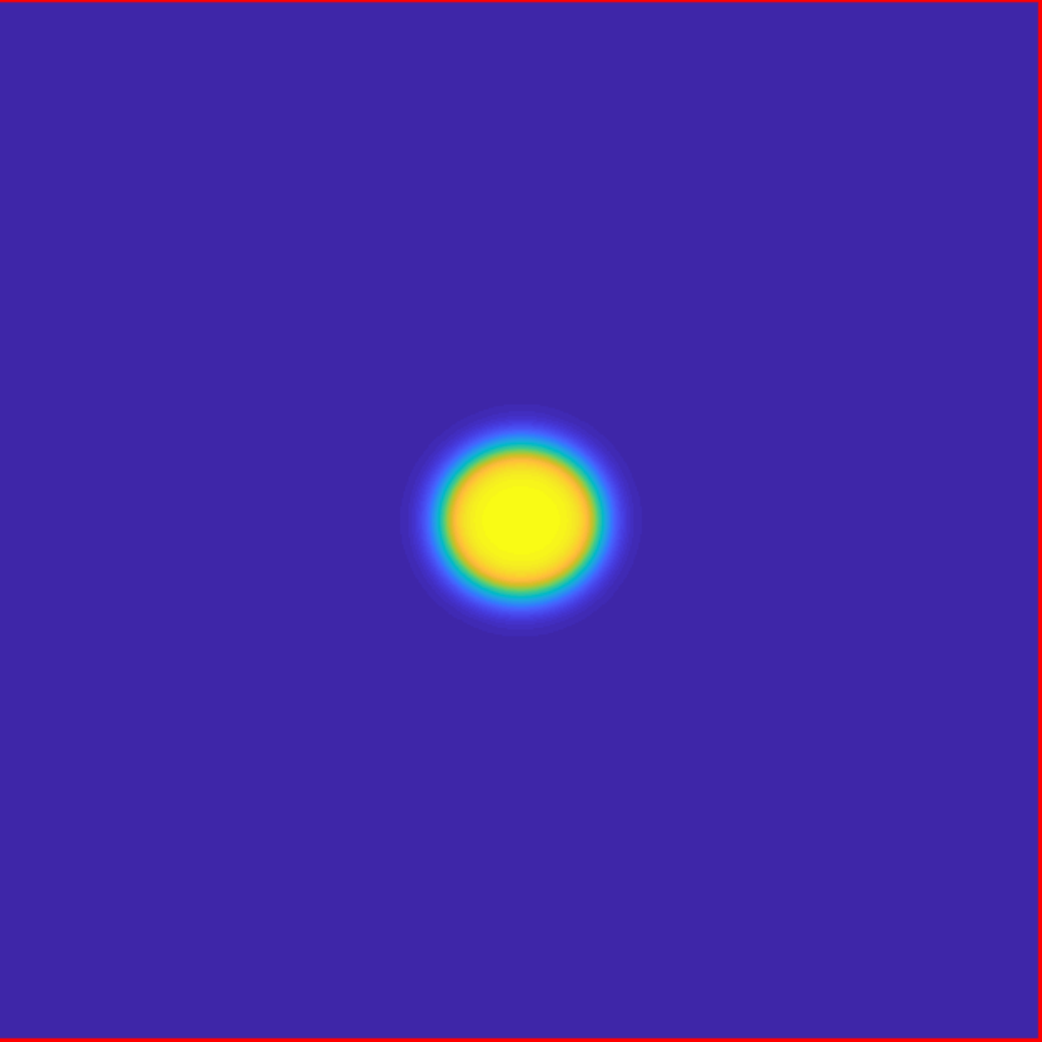}
		\end{minipage}
	}
	\caption{The process of merging and relaxing of two kissing circles obtained by the ACSAV algorithm. From left to right, top to bottom, $t=0$, $t=0.1$, $t=0.2$, $t=0.3$, $t=0.5$ and $t=0.6$, $\eta = 0.02$, $\lambda = 0.01$, $M = 10$, $\nu = 1$, $\Delta t = 0.005$.}
	\label{fig:ACSAVkiss-phi}
\end{figure}

\subsection{Computational efficiency}\label{65}
In this subsection, we present the CPU time for simulating spinodal decomposition in sec.~\ref{62} and shape relaxation in sec.~\ref{64}, by using the CNLFAC, ACSAV, and ACSAV-ECT algorithms.

The CPU time are reported in Table \ref{tab:cputime}. To be specific, the CPU time for simulating spinodal decomposition is counted with $T=5, \Delta t=0.01, h=2\pi/80$; and the CPU time for simulating shape relaxation is counted with $T=0.9, \Delta t=0.005, h=1.5/80$.  From Table \ref{tab:cputime} one can see that the ACSAV algorithm has a significant improvent on computational efficiency as compared to the CNLFAC scheme, as the CPU time can be reduced to 26\%. The ACSAV-ECT scheme features a further reduction on computational time, since the execution time is 7\% of that for CNLFAC method. Overall, the ACSAV-ECT scheme outperforms the other schemes since it is much faster while preserving similar accuracy.

\begin{table}[h!]
\caption{\noindent CPU times for simulating spinodal decomposition in sec.~\ref{62} and shape relaxation in sec.~\ref{64}.  The CPU time is also quantified in percentage relative to the CNLFAC scheme. }
\label{tab:cputime}
\begin{tabular}{|c||c|c||c|c|}
	\hline
	Scheme & CPU time for sec.~\ref{62} & PCT &  CPU time for sec.~\ref{64} & PCT \\
	\hline
	CNLFAC  & 7356.9s  &   100\% &  2701.0s  &   100\%  \\
	ACSAV  &  1912.8s  &  26.0\%  &  640.2s &   24\% \\
	ACSAV-ECT  &  515.5s  &   7.0\%  &  177.9s  &   6.6\%\\
	\hline
\end{tabular}
\end{table}

\section{Conclusion Remarks}\label{sec:sum}
In this paper, we have proposed three linear, unconditionally long time stable, second order, decoupling methods for solving the ACNS system. The first method, namely the CNLFAC scheme, is based on the Crank--Nicolson leap-frog time discretization, the Lagrange multiplier method for linearizing the Allen--Cahn equations, and an artificial compression technique for decoupling the velocity and pressure in the NS equations. 
The second scheme, namely ACSAV, is formulated by incorporating an SAV decoupling strategy into the CNLFAC method so that the Allen--Cahn and Navier--Stokes equations are numerically decoupled and a highly efficient, fully decoupled scheme is built. The third is another version of ACSAV with explicit convection term, i.e.~ACSAV-ECT.
We prove that all three schemes are unconditionally stable without any time step conditions.
Numerical experiments are performed to verify that our schemes are of second order accuracy in time and unconditionally stable. Efficiency tests show that the ACSAV and ACSAV-ECT schemes can significantly reduce the execution time as compared to the CNLFAC scheme.

It is worth noting that there are very few second order, unconditionally stable, fully decoupled numerical schemes for solving hydrodynamics coupled phase field models. 
The idea here (CNLF+AC+SAV) serves as a template for designing unconditionally stable and fully decoupled schemes for solving other related phase field models,  such as the Cahn--Hilliard--Navier--Stokes equations, the Cahn--Hilliard--Hele--Shaw system, and phase field fluid models of variable densities. 

\section*{Acknowledgments}
 R. Cao and H. Yang were supported in part by the National Natural Science Foundation of China under grant 11801348, the key research projects of general universities in Guangdong Province (grant no.~2019KZDXM034), and the basic research and applied basic research projects in Guangdong Province (Projects of Guangdong, Hong Kong and Macao Center for Applied Mathematics, grant no.~2020B1515310018).
 N. Jiang was partially supported by the US National Science Foundation grants DMS-1720001 and DMS-2120413.

\end{document}